\documentclass{amsart}

\usepackage{amssymb}
\usepackage{amsfonts}
\usepackage{amsmath}
\usepackage{amsthm}
\usepackage{graphicx}
\usepackage{mathrsfs}
\usepackage{dsfont}
\usepackage{amscd}
\usepackage{multirow}
\usepackage[all]{xy}
\normalfont
\usepackage[T1]{fontenc}
\usepackage{calligra}
\usepackage{verbatim}
\usepackage[usenames,dvipsnames]{color}
\usepackage[colorlinks=true,linkcolor=RoyalBlue,citecolor=Cerulean,pdfauthor={Fr{\'e}d{\'e}ric Latr{\'e}moli{\`e}re, PhD},pdftitle={Metric Approximations of Spectral Triples on the Sierpi{\'n}ski Gasket and other fractal curves}]{hyperref}
\usepackage{enumerate}
\usepackage{slashed}
\usepackage{array}

\newcommand{\N}{{\mathds{N}}}
\newcommand{\Z}{{\mathds{Z}}}

\newcommand{\R}{{\mathds{R}}}
\newcommand{\C}{{\mathds{C}}}
\newcommand{\T}{{\mathds{T}}}

\newcommand{\D}{{\mathfrak{D}}}
\newcommand{\A}{{\mathfrak{A}}}
\newcommand{\B}{{\mathfrak{B}}}

\newcommand{\Nbar}{\overline{\N}}
\newcommand{\Lip}{{\mathsf{L}}}
\newcommand{\Hilbert}{{\mathscr{H}}}

\newcommand{\GH}{{\mathsf{GH}}}

\newcommand{\dpropinquity}[1]{{\mathsf{\Lambda}^\ast_{#1}}}

\newcommand{\modpropinquity}[1]{{\mathsf{\Lambda}^{\mathsf{mod}}_{#1}}}
\newcommand{\dmodpropinquity}[1]{{\mathsf{\Lambda}^{\ast\mathsf{mod}}_{#1}}}
\newcommand{\dmetpropinquity}[1]{{\mathsf{\Lambda}^{\ast\mathsf{met}}_{#1}}}

\newcommand{\spectralpropinquity}{{\mathsf{\Lambda}^{\mathsf{spec}}}}

\newcommand{\Kantorovich}[1]{{\mathsf{mk}_{#1}}}

\newcommand{\Haus}[1]{{\mathsf{Haus}_{#1}}}

\newcommand{\StateSpace}{{\mathscr{S}}}

\newcommand{\MongeKant}{{Mon\-ge--Kan\-to\-ro\-vich metric}}

\newcommand{\gQVB}{metrized quantum vector bundle}

\newcommand{\qvb}[3]{{\mathrm{qvb}\left({#1},{#2},{#3}\right)}}

\newcommand{\qcms}{quantum compact metric space}

\newcommand{\unit}{1}

\newcommand{\sa}[1]{{\mathfrak{sa}\left({#1}\right)}}

\newcommand{\inner}[3]{{\left<{#1},{#2}\right>_{#3}}}

\newcommand{\dom}[1]{{\operatorname*{dom}\left({#1}\right)}}

\newcommand{\norm}[2]{\left\|{#1}\right\|_{#2}}

\newcommand{\CDN}{{\mathsf{DN}}}

\newcommand{\worknote}[1]{}
\newcommand{\opnorm}[3]{{\left|\mkern-1.5mu\left|\mkern-1.5mu\left| {#1} \right|\mkern-1.5mu\right|\mkern-1.5mu\right|_{#3}^{#2}}}

\newcommand{\tunnelmagnitude}[2]{{\mu\left({#1}\middle\vert{#2}\right)}}

\newcommand{\tunnelextent}[1]{{\chi\left({#1}\right)}}

\newcommand{\co}[1]{{\overline{\mathrm{co}}\left(#1\right)}}

\newcommand{\module}[1]{{\mathscr{#1}}}

\newcommand{\spectrum}[1]{\mathrm{Sp}\left({#1}\right)}

\theoremstyle{plain}
\newtheorem{theorem}{Theorem}[section]

\newtheorem{lemma}[theorem]{Lemma}

\newtheorem{theorem-definition}[theorem]{Theorem-Definition}

\theoremstyle{definition}
\newtheorem{definition}[theorem]{Definition}

\newtheorem{convention}[theorem]{Convention}
\newtheorem{hypothesis}[theorem]{Hypothesis}

\theoremstyle{remark}

\newtheorem{remark}[theorem]{Remark}

\newtheorem{notation}[theorem]{Notation}

\renewcommand{\geq}{\geqslant}
\renewcommand{\leq}{\leqslant}

\newcommand{\Siep}{{Sierpi{\'n}ski}}
\newcommand{\SiepG}{{{\Siep} gasket}}
\newcommand{\SG}[1]{\mathcal{SG}_{#1}}
\newcommand{\HG}[1]{\mathcal{HG}_{#1}}
\newcommand{\FC}[1]{\mathcal{FC}_{#1}}
\newcommand{\Dirac}{{\slashed{D}}}
\newcommand{\range}[1]{{\mathrm{range}\left({#1}\right)}}

\numberwithin{equation}{section}

\allowdisplaybreaks[4]

\hyphenation{Gro-mov}
\hyphenation{Haus-dorff}
\begin{document}

\title[The Sierpi{\'n}ski triangle as a limit]{Metric Approximations of Spectral Triples on the Sierpi{\'n}ski Gasket and other fractal curves}
\author{Th{e}r{e}se-Marie Landry}
\address{Department of Mathematics, University of California, Riverside, CA 92521, USA.}
\email{tland004@ucr.edu}

\author{Michel L. Lapidus}
\address{Department of Mathematics, University of California, Riverside, CA 92521, USA.}
\email{lapidus@math.ucr.edu}
\urladdr{http://www.math.ucr.edu/~lapidus}
\thanks{The work of the second author (MLL) was partially supported by the US National Science Foundation (NSF) under the research grants DMS-0707524 and DMS-1107750, the Academic Senate of the University of California, Riverside, as well as the Burton Jones Endowed Chair in Pure Mathematics (since 1997) at the University of California, Riverside.}

\author{Fr\'{e}d\'{e}ric Latr\'{e}moli\`{e}re}
\address{Department of Mathematics \\ University of Denver \\ Denver CO 80208, USA}
\email{frederic@math.du.edu}
\urladdr{http://www.math.du.edu/\symbol{126}frederic}

\date{\today}
\subjclass[2000]{Primary:  46L89, 46L87, 46L30, 58B34, 34L40; Secondary: 53C22, 58B20, 58C40, 81R60.}
\keywords{Noncommutative metric geometry, Gromov--Hausdorff convergence, spectral triples, Monge--Kantorovich distance, quantum metric spaces, geodesic metric, Hausdorff dimension and measure, {\SiepG}, harmonic gasket, piecewise $C^1$-fractal curves, proper monoids, Gromov--Hausdorff distance for proper monoids, C*-dynamical systems.}

\begin{abstract}
  Noncommutative geometry provides a framework, via the construction of spectral triples, for the study of the geometry of certain classes of fractals. Many fractals are constructed as natural limits of certain sets with a simpler structure: for instance, the {\SiepG} is the limit of finite graphs consisting of various affine images of an equilateral triangle. It is thus natural to ask whether the spectral triples, constructed on a class of fractals called piecewise $C^1$-fractal curves, are indeed limits, in an appropriate sense, of spectral triples on the approximating sets. We answer this question affirmatively in this paper, where we use the spectral propinquity on the class of metric spectral triples, in order to formalize the sought-after convergence of spectral triples. Our results and methods are relevant to the study of analysis on fractals and have potential physical applications.
\end{abstract}
\maketitle

\tableofcontents

%%%%%%%%%%%%%%%%%%%%%%%%%%%%%%%%%%%%%%%%%%%

%\nocite{*}

%\Fontauri

\section{Introduction}

Our purpose is to demonstrate how the spectral triples of \cite{Lapidus08,Lapidus14} constructed on such fractals as the {\SiepG} or the harmonic gasket are indeed the limits, for the spectral propinquity, of spectral triples on finite graphs which naturally approximate these fractals. In short, the spectral triples constructed in \cite{Lapidus08} by Christensen, Ivan and the second author for the {\SiepG}, and then, in \cite{Lapidus14}, by the second author and Sarhad, for a larger class of fractals built with $C^1$ curves, including the harmonic gasket, a typical example of a measurable, fractal Riemannian manifold, can be described as follows. They are infinite direct sums of spectral triples associated with each curve out of which the given fractal is built. Alternatively, and somewhat heuristically, they can be thought of as suitable limits of spectral triples attached to each natural pre-fractal approximation, or ``cell'', of the fractal. The key objective of the present paper is to make precise and to rigorously establish the latter observation, using the \emph{spectral propinquity} introduced by the third author.

\bigskip

The spectral propinquity is a distance on metric spectral triples introduced by the third author as part of a project to devise an analytic framework to discuss approximations of noncommutative geometric structures, from {\qcms s} to metric spectral triples, including Hilbert modules and group actions. Thus, the present work provides new examples of applications of noncommutative metric geometry, and other functional analytic methods in metric geometry, to fractal geometry. The results in this paper are an example of an approximation of an analogue of a differential structure by metric means, taking advantage of the generalization of differential structures given by Connes' spectral triples.

\bigskip

Our journey begins with a new form of geometry made possible by the duality between compact Hausdorff spaces and certain commutative Banach algebras, called commutative C*-algebras. Gelfand defined a (unital) \emph{$C^\ast$-algebra} $\A$ as a unital Banach algebra over $\C$, endowed with an anti-multiplicative, conjugate linear involution $a\in\A\mapsto a^\ast\in\A$ such that $\norm{a^\ast a}{\A} = \norm{a}{\A}^2$, for all $a\in\A$, where $\norm{\cdot}{\A}$ is the norm on $\A$. An example of a commutative, unital C*-algebra is the algebra $C(X)$ of all $\C$-valued continuous functions over a compact Hausdorff space $X$, equipped with the supremum norm. In fact, Gelfand, Naimark and Segal proved that the category of Abelian unital C*-algebras with their natural morphisms, called *-morphisms, is dual to the category of compact Hausdorff spaces with continuous maps. Thus, studying the topology of compact Hausdorff spaces is equivalent to studying commutative, unital C*-algebras.

\bigskip

All C*-algebras are *-isomorphic to a closed subalgebra of operators on a Hilbert space, closed under the adjoint operation. In particular, C*-algebras are the natural algebras for observables in quantum mechanics, or for the description of local observables in quantum field theory. Moreover, via such constructions as C*-crossed-products or groupoid C*-algebras, there are natural ways to associate noncommutative C*-algebras to certain geometric situations where the spaces of interest may be highly singular, such as the orbit space for minimal actions of $\Z$ on the Cantor set. Since the category of Abelian unital C*-algebras is dual to the category of compact Hausdorff spaces, it was then natural to generalize various constructions from topology, such as $K$-theory, to all C*-algebras. This line of research has been very beneficial to both the study of C*-algebras and to topology.

\bigskip

Since C*-algebras enable the study of noncommutative topology, it is natural to ask whether other geometric structures could be extended to noncommutative algebras. Our present work involves two such research directions: noncommutative Riemannian geometry, and noncommutative metric geometry.

\bigskip

Connes proposes that Dirac operators on the $L^2$-sections of the spinor bundle for connected compact Riemannian spin manifolds can be generalized to possibly noncommutative algebras with the structure of \emph{spectral triples}, a form of quantum differential structure. There is some amount of variation in the definition of a spectral triple, but the following definition seems to contain the common traits of these objects, and is a good starting point for the present work.

\begin{definition}[{\cite{Connes89,Connes}; see also, e.g., \cite{sixquarter}}]\label{spectral-triple-def}
  A \emph{spectral triple} $(\A,\Hilbert,D)$ consists of a unital C*-algebra $\A$, a Hilbert space $\Hilbert$ which is a left $\A$-module, and a self-adjoint operator $D$ defined on a dense linear subspace, $\dom{D}$, of $\Hilbert$ such that
  \begin{enumerate}
    \item $D+i$ has a compact inverse,
    \item the set of $a\in\A$ such that
      \begin{equation*}
        a \cdot \dom{D} \subseteq \dom{D} 
      \end{equation*}
      and
      \begin{equation*}
        [D,a] \text{ is closeable, with bounded closure}
      \end{equation*}
      is dense in $\A$.
  \end{enumerate}
\end{definition}

In addition to cohomological information (in the form of a cyclic cocycle), Connes then noted in \cite{Connes89} that a spectral triple $(\A,\Hilbert,D)$ always induces an extended metric on the state space of its underlying C*-algebra $\A$, by setting, for any two states $\varphi,\psi$ of $\A$,
\begin{equation}\label{distance-eq}
  \Kantorovich{D}(\varphi,\psi) = \sup\left\{ \left|\varphi(a) - \psi(a)\right| : a\in \A, a = a^\ast, \opnorm{[D,a]}{}{\Hilbert} \leq 1  \right\}\text{.}
\end{equation}
He also observed in \cite{Connes89} that when $D$ is actually the Dirac operator on a connected compact spin Riemannian manifold, then $\Kantorovich{D}$ restricts on the space of points, i.e. pure states, to the geodesic distance on the manifold.

In fact, the extended metric defined by Expression (\ref{distance-eq}) is, in the classical picture of the Dirac operator on a manifold or for the spectral triples from \cite{Lapidus08,Lapidus14}, a special case of the {\MongeKant}, introduced by Kantorovich \cite{Kantorovich40,Kantorovich58}. Thus spectral triples provide one possible route to define an analogue of the {\MongeKant} within the broader context of noncommutative C*-algebras.

\bigskip

Rieffel \cite{Rieffel98a,Rieffel99,Rieffel00} started the study of noncommutative metric geometry, where the basic objects are noncommutative analogues of the algebras of Lipschitz functions over compact metric spaces, called {\qcms s}. Indeed, if $(X,d)$ is a (compact) metric space, and if $f \in C(X)$, then the \emph{Lipschitz seminorm} of $f$ is defined as
\begin{equation*}
  \Lip_d(f) = \sup\left\{ \frac{|f(x) - f(y)|}{d(x,y)} : x,y \in X, x\neq y \right\} \text,
\end{equation*}
allowing for the value $\infty$.

The {\MongeKant}, introduced by Kantorovich \cite{Kantorovich40,Kantorovich58} in his study of Monge's transportation problem, is defined for any two Radon probability measures $\mu$,$\nu$ on $X$ by
\begin{equation}\label{Kantorovich-eq}
  \Kantorovich{\Lip_d}(\mu,\nu) = \sup\left\{ \left| \int_X f \, d\mu - \int_X f\, d\nu \right| : f\in C(X), \Lip_d(f) \leq 1 \right\}\text.
\end{equation}
It is an important metric, for instance, in probability theory and the study of the transportation problem, in particular because it induces the weak* topology on the space $\StateSpace(C(X))$ of Radon probability measures on $X$. Moreover, the map which sends any point $x$ in $X$ to the Dirac probability measure at $x$ becomes an isometry from $(X,d)$ to $(\StateSpace(C(X)),\Kantorovich{\Lip_d})$ --- thus allowing one to recover the original metric $d$ on $X$.

The \emph{state space} of a possibly noncommutative C*-algebra is defined as the set of its positive linear functionals of norm $1$, and Rieffel used the fundamental properties of the {\MongeKant} as the starting point for the study of {\qcms s}. The actual definition of a {\qcms} has evolved as research on the subject has progressed, and seems to now settle around the following notion.

\begin{definition}[{\cite{Connes89,Rieffel98a,Rieffel99,Latremoliere13}}]\label{qcms-def}
  A \emph{\qcms} $(\A,\Lip)$ is an ordered pair of a unital C*-algebra $\A$ and a seminorm $\Lip$ defined on a dense subspace $\dom{\Lip}$ of the self-adjoint part
  \begin{equation*}
    \sa{\A} = \left\{ a \in \A : a = a^\ast \right\}
  \end{equation*}
  of $\A$ such that the following four conditions hold:
  \begin{enumerate}
  \item $\{a\in\dom{\Lip} : \Lip(a) = 0 \} = \R\unit_\A$,
  \item the \emph{\MongeKant} $\Kantorovich{\Lip}$ defined between any two states $\varphi,\psi \in \StateSpace(\A)$ of $\A$ by
    \begin{equation*}
      \Kantorovich{\Lip}(\varphi,\psi) = \sup\left\{ |\varphi(a) - \psi(a)| : a\in\dom{\Lip}, \Lip(a) \leq 1 \right\}
    \end{equation*}
    metrizes the weak* topology on $\StateSpace(\A)$,
  \item for all $a,b \in \dom{\Lip}$, we have
    \begin{equation*}
      \frac{ab+ba}{2}, \frac{ab-ba}{2i} \in \dom{\Lip}\text{,}
    \end{equation*}
    and $\Lip$ satisfies the \emph{Leibniz inequality}
    \begin{equation*}
      \max\left\{ \Lip\left(\frac{ab+ba}{2}\right), \Lip\left(\frac{ab-ba}{2i}\right) \right\} \leq \Lip(a)\norm{b}{\A} + \norm{a}{\A} \Lip(b)\text{,}
    \end{equation*}
  \item $\{ a\in \dom{\Lip} : \Lip(a) \leq 1 \}$ is closed for the topology induced by $\norm{\cdot}{\A}$ on $\A$.
  \end{enumerate}
\end{definition}

\begin{convention}
  In this paper, by convention, if $\Lip$ is a seminorm defined on a dense subspace $\dom{\Lip}$ of a normed vector space $E$, and if $a\in E\setminus\dom{\Lip}$, then we set $\Lip(a) = \infty$. When using this convention, $0\cdot\infty = 0$, while $x\infty = \infty + x = x+\infty = 0+\infty = \infty + 0 = \infty$ for all $x > 0$. 
\end{convention}

Rieffel's motivation for the introduction of {\qcms s} was, in part, to provide a formal framework for certain approximations of noncommutative algebras found in the literature in quantum physics. To this end, Rieffel introduced a first noncommutative analogue of the Gromov--Hausdorff distance between {\qcms s}. The search for an analogue of the Gromov--Hausdorff distance applicable to metric noncommutative geometry and possessing certain desirable properties, missing from the very important first analogue, eventually led the third author to the discovery of the \emph{Gromov--Hausdorff propinquity}.

The Gromov--Hausdorff propinquity is a complete metric on the class of {\qcms s}, up to full quantum isometries.
\begin{definition}[{\cite{Latremoliere13}}]
  Two {\qcms s} $(\A,\Lip_\A)$ and $(\B,\Lip_\B)$ are \emph{fully quantum isometric} if there exists a *-isomorphism $\pi : \A\rightarrow \B$ such that $\Lip_\B\circ\pi = \Lip_\A$.
\end{definition}

The class map which sends a compact metric space $(X,d)$ to the (classical) {\qcms} $(C(X),\Lip_d)$ is a homeomorphism from the Gromov--Hausdorff distance topology to the Gromov--Hausdorff propinquity topology. The Gromov--Hausdorff propinquity thus provides a functional analytic description of the Gromov--Hausdorff distance. An advantage of this presentation is that it opens up the possibility of defining the convergence of structures from other areas of noncommutative geometry, such as modules (analogues of vector bundles), group actions, and even spectral triples. The resulting metrics extend the Hausdorff distance to entire new classes of objects. Our focus in this paper is on the \emph{spectral propinquity}, a metric introduced by the third author between certain spectral triples called \emph{metric spectral triples}.

\begin{definition}[{\cite{Latremoliere18g}}]
  A \emph{metric} spectral triple $(\A,\Hilbert,D)$ is a spectral triple such that the metric $\Kantorovich{D}$, defined by Expression (\ref{distance-eq}), metrizes the weak* topology on the state space $\StateSpace(\A)$ of $\A$.
\end{definition}
We note that if $(\A,\Hilbert,D)$ is a metric spectral triple, then in particular, $\A$ must be represented faithfully on $\Hilbert$, namely for all $a\in\A$, if $\forall \xi \in \Hilbert \quad a\xi = 0$, then $a = 0$.

\bigskip

The spectral propinquity is a metric between metric spectral triples, up to unitary equivalence, which thus opens up the possibility of discussing approximations and perturbations of spectral triples, and in fact, is a metric on Riemannian structures and their noncommutative analogues. As we shall see in this work, the spectral propinquity induces a nontrivial topology on metric spectral triples.

\bigskip

Spectral triples over commutative C*-algebras also provide a new way to study the geometry of certain singular spaces. A prime example is given by the construction of spectral triples on certain fractals by the second author and his collaborators. In \cite{Lapidus94,Lapidus97}, Lapidus initiated a program to apply ideas from noncommutative geometry to the study of fractals. In \cite{Lapidus08}, The second author, together with {E}. Christensen and {C}. Ivan, construct a spectral triple on the {\SiepG} --- a basic example for analysis on fractals. This spectral triple allows one to recover the Hausdorff dimension and the Hausdorff measure of the {\SiepG}, and brings the tools of noncommutative geometry to bear on fractal geometry. In particular, it is shown in \cite{Lapidus08} that the restriction of the metric induced by Expression (\ref{distance-eq}) to the space of points (i.e., the pure states) of the {\SiepG} is the geodesic distance on the {\SiepG}. Similar results are obtained in \cite{Lapidus08} about a certain class of infinite trees and other quantum graphs. Other constructions and discussions of spectral triples on fractals can be found in \cite{sixfourfifth,tenhalf,fourhalf,twohalf,twoquarter}.

In \cite{Lapidus14}, the second author and J. Sarhad extended the construction of \cite{Lapidus08} to more general fractals, including the harmonic gasket introduced by Kusuoka and Kigami, in \cite{Kusuoka89,fifteenhalf} and \cite{elevenhalf}. In particular, the harmonic gasket is an example of a measurable Riemannian geometry, with an appropriate notion of volume (the Kusuoka measure), gradient, Laplacian, geodesic distance, and more, which makes it the fractal closest to a manifold, even though it is not a topological manifold, let alone a differentiable one. As we alluded to earlier, the spectral triples constructed in \cite{Lapidus08,Lapidus14} also recover, again via Expression (\ref{distance-eq}), the geodesic distance on the harmonic and {\SiepG s}, and are thus examples of metric spectral triples.

The authors of \cite{Lapidus14} also introduced a set of axioms under which their construction and results continue to hold. The resulting geometric objects are called \emph{piecewise $C^1$-fractal curves} in the present paper. Our own results will be established for this class of fractal--type ``manifolds''.

\begin{figure}\label{SiepG-fig}
  \centering
  \begin{tabular}{m{8cm} m{1cm} m{2cm}}
    \includegraphics[height=0.8in]{./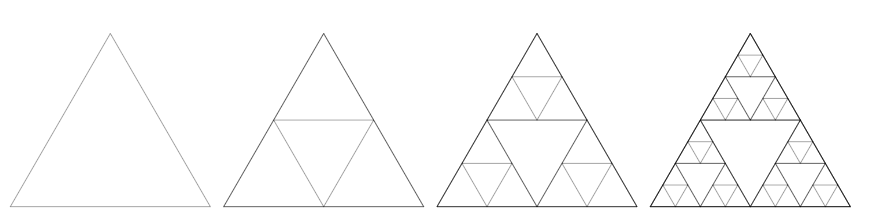} & \vspace*{1.6cm} $\ldots \longrightarrow$ \vspace*{0.3in} & \includegraphics[height=0.8in]{./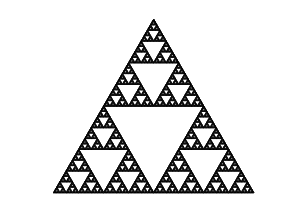}
  \end{tabular}
  \caption{The {\SiepG} is a limit of graphs in the plane for the Hausdorff distance}
\end{figure}

\bigskip

Now, fractals such as the {\SiepG} or the harmonic gasket are attractors of certain iterated function systems, and are limits of certain natural, and easier to understand, compact subsets of Euclidean spaces. Some approximations of these fractals for the Hausdorff distance are in fact basic tools in their study. For instance, the {\SiepG} is the limit, for the Hausdorff distance, of certain finite graphs in the plane; see Figure \ref{SiepG-fig}. A natural question arises:
\begin{quote}
  Is the spectral triple constructed on certain fractals which are limits of certain natural subsets of Euclidean spaces in \cite{Lapidus14} a limit, in some sense, of spectral triples on these natural approximation sets?
\end{quote}
This question is nontrivial since it first requires some notion of approximation for spectral triples. Yet, the development of the spectral propinquity from functional analytic and noncommutative metric geometry considerations provides us with the tool of the spectral propinquity to address this question. Therefore, the core concept which enables us to discuss the convergence of spectral triples is metric convergence.

\bigskip

In the process of precisely formulating our results, we were led naturally to introducing the notion of an \emph{approximation sequence} $(X_n)_{n\in\N}$ of a given piecewise $C^1$-fractal--curve $X$, relative to a parametrization of $X$; see Definition \ref{approx-sequence-def} below. Accordingly, our main results can informally be described as stating that given any approximating sequence $(X_n)_{n\in\N}$ of $X$, the sequence of corresponding spectral triples converges, in the sense of the spectral propinquity, to the spectral triple associated with $X$.

\bigskip

In closing this part of the introduction, we point out that our work and methods are naturally relevant to the study of analysis and probability theory on fractals, \cite{zero, ninehalf,elevenhalf,Kigami93,thirteenhalf,Kigami08,fourteenthreequarter,fourteenfourfifth,Kusuoka89,fifteenhalf,thirtyfourhalf,Lapidus94,Lapidus97, thirtyfiveoneeigth,thirtyfivehalf}, as well as to the physical and computational investigation of fractals and random media \cite{zero,fiftyninehalf,fourteenthreequarter,fourteenfourfifth,fifteenhalf,thirtyfourhalf,sixtytwohalf} and the references therein. Indeed, fractal models of natural objects and phenomena (such as clouds, coastlines, mountains, rivers, trees, plants, lungs, newtworks of blood vessels, lightning, galaxy clusters, etc) were developed with the understanding that, from a physical point of view, these natural objects may not truly be fractal, but can be well modeled by such a geometry up to a certain scale \cite{fiftyninehalf,sixtytwohalf}. It is therefore important to investigate how well these geometries are approximated by simpler finite graphs or piecewise smooth geometries.

\bigskip

Our work is organized in two main sections, which follow the natural path for the construction of the spectral propinquity in \cite{Latremoliere18g}. First, we discuss metric convergence for the class of fractals introduced in \cite{Lapidus14}. Following the original construction provided in \cite{Lapidus08}, we begin by giving detailed background on the {\SiepG}, as our first main example and also because it serves as a motivation and an important test case. We also discuss the fractal-like spaces from \cite{Lapidus14}, which we call \emph{piecewise $C^1$-fractal curves}. Another important example of such spaces, besides the {\SiepG}, is the harmonic gasket introduced by Kusuoka in \cite{Kusuoka89,fifteenhalf} and Kigami in \cite{elevenhalf}.

We then explain what we will mean by approximation stages for piecewise $C^1$-fractal curves, modeled after the natural approximations of the {\SiepG} by finite graphs, a fact which also plays a key role in the harmonic and spectral analysis of fractals, as well as in the study of diffusions (i.e., the analysis of a suitable analogue of Brownian motion) and probability theory on fractals; see \cite{zero,ninehalf,fifteenhalf,thirteenhalf,Kigami93,fourteenthreequarter,fourteenfourfifth,Kigami08,Kusuoka89,thirtyfourhalf,Lapidus97,thirtyfiveoneeigth,thirtyfivehalf}. Now, the metric we endow all these spaces with is the (intrinsic) geodesic distance, not the restriction of the Euclidean distance to them --- which is indeed different, in general. We establish the convergence of the approximation stages to piecewise $C^1$-fractal curves for the Gromov--Hausdorff distance. As we discussed, this immediately implies a result about convergence for the propinquity. However, in order to apply our framework to the convergence of spectral triples, we need to provide a functional analytic proof, which concludes this first main section (Section 2). This first section also includes the basic definition of the Gromov--Hausdorff propinquity.

\bigskip

Second, we prove that the spectral triples of \cite{Lapidus14} associated with piecewise $C^1$-fractal curves are limits, with respect to the spectral propinquity, of certain spectral triples constructed on the approximation stages of these fractals. This implies, in particular, that the spectral triples on the {\SiepG} \cite{Lapidus08} and on the harmonic gasket \cite{Lapidus14} are limits of spectral triples on their naturally approximating graphs. In order to obtain this result, we first recall the construction from \cite{Lapidus14}, and introduce the spectral triples on the approximation stages. We then follow the process from \cite{Latremoliere18g}. A metric spectral triple consists of
\begin{itemize}
\item a {\qcms},
\item a Hilbert space with an extra norm defined on some dense subspace, given by the the graph norm of the Dirac operator,
\item a *-representation of the {\qcms} on the Hilbert space,
\item a group action of $\R$ on the Hilbert space obtained by exponentiating $i$ times the Dirac operator.
\end{itemize}
Each of these ingredients requires a form of convergence, each devised by the third author, for different structures from noncommutative metric geometry. Of course, the convergence of {\qcms s} is defined by the propinquity. The convergence of the Hilbert spaces with the graph norms of the Dirac operators of spectral triples is given by the \emph{modular propinquity} \cite{Latremoliere16c}. The convergence of the *-representations in spectral triples is defined by the \emph{metrical propinquity} \cite{Latremoliere18g}. Including an appropriately defined notion of covariant metric propinquity \cite{Latremoliere18b}, called the spectral propinquity \cite{Latremoliere18g}, we deduce the convergence of the spectral triples.

We close this introduction by providing notation which will be used throughout this paper.

\bigskip

\begin{notation}
  The set of natural numbers, denoted by $\N$, starts with zero: $\N = \{ 0,1,2,\ldots\}$. We write $\Nbar = \N \cup \{\infty\}$, where $\infty\not\in \N$, with the order relation $\leq$ such that $\infty$ is the greatest element of $\Nbar$ and the restriction of $\leq$ to $\N$ is the usual order of $\N$.

  Moreover, for any $n\in \Nbar$, we set $\Nbar_n = \{ k \in \Nbar : k \leq n \}$ and $\N_n = \{ k \in \N : k \leq n \}$.
\end{notation}

\begin{notation}
  The norm of a normed vector space $E$ is denoted by $\norm{\cdot}{E}$; if $E$ is an inner product space, then $\norm{\cdot}{E}$ is the norm induced by this inner product. If $E = C(X)$ is the C*-algebra of all $\C$-valued continuous functions over a compact Hausdorff space $X$, then $\norm{\cdot}{C(X)}$ is the supremum norm over $X$.

  For any unital C*-algebra $\A$, we denote by $\StateSpace(\A)$ the state space of $\A$, i.e. the set of all positive linear functionals of norm $1$ over $\A$. In particular, by the Riesz--Radon representation theorem, $\StateSpace(C(X))$, for $X$ a compact Hausdorff space, is naturally identified with the space of Radon probability measures over $X$.

  The space of self-adjoint element $\{ a \in \A : a^\ast = a \}$ of $\A$ is denoted by $\sa{\A}$; if $\A = C(X)$ is the space of $\C$-valued continuous functions on $X$, as above, then $\sa{\A}$ is the algebra of $\R$-valued continuous functions on $X$.
\end{notation}

\begin{notation}
  The norm of a continuous linear endomorphism $T$ on a normed vector space $E$ is denoted by $\opnorm{T}{}{E}$ --- so that we need not always refer to the Banach algebra of such operators explicitly.
\end{notation}

\begin{notation}\label{Hausdorff-notation}
  If $(E,d)$ is a metric space, then $\Haus{d}$ denotes the Hausdorff distance  \cite{Hausdorff} on the space of all nonempty closed subsets of $(E,d)$; see, e.g., \cite{sixquarterepsilon, Gromov81, Gromov, thirtytwoquarter}. If the space $E$ is a vector space endowed with a norm $\norm{\cdot}{E}$, then $\Haus{\norm{\cdot}{E}}$ is the Hausdorff distance for the metric induced on $E$ by $\norm{\cdot}{E}$. 
\end{notation}
\section{Graph Approximations of Piecewise $C^1$-Fractal Curves for the Gromov--Hausdorff Distance}

\subsection{The {\Siep} Gasket}

\begin{figure}
  \centering
  \includegraphics[height=2in]{./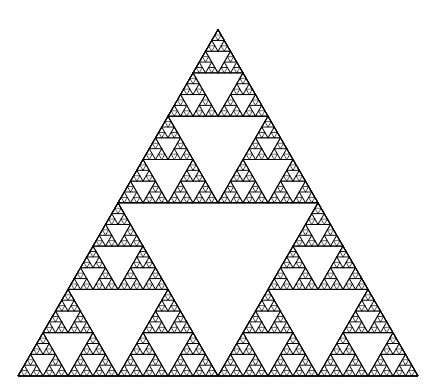}
  \caption{The {\SiepG}}
\end{figure}

The {\SiepG} $\SG{\infty}$ is a fractal, constructed as the attractor set of an iterated function system (IFS) of affine functions of the plane. Specifically, let
\begin{equation*}
  v_0 = \begin{pmatrix} 0 \\ 0 \end{pmatrix}\text{, }v_1 = \begin{pmatrix} 1 \\ 0 \end{pmatrix} \text{ and }v_2 = \begin{pmatrix} \frac{1}{2} \\ \frac{\sqrt{3}}{2} \end{pmatrix} \text{.}
\end{equation*}
We write $V_0 = \{ v_0, v_1, v_2 \}$. Let $\Delta_{0,1} = \SG{0}$ be the boundary of the convex hull of $V_0$ in $\R^2$ --- i.e., $\Delta_{0,1}$ is an equilateral triangle in the plane, whose edges have length $1$. Let $L_0 = \{ \Delta_{0,1} \}$.

We define three similitudes of the plane by letting for each $j\in\{0,1,2\}$,
\begin{align*}
  T_j &: x \in \R^2 \longmapsto \frac{1}{2} \left( x + v_j \right) \in \R^2 \text{.} \\
\end{align*}

We will use an explicit construction of $\SG{\infty}$ as a limit of finite graphs in $\R^2$, defined inductively. For all $n\in\N$, $n>0$, we set
\begin{equation*}
  L_n = \left\{ \Delta_{n,j} : j \in \{1,\ldots,3^n\} \right\} \text{,}
\end{equation*}
where
\begin{itemize}
\item $\Delta_{n+1,j + r 3^n} = T_r\Delta_{n,j}$, for all $j\in\{1,\ldots,3^{n}\}$ and $r\in\{0,1,2\}$,
\item $V_{n+1} = \bigcup_{r=0}^2 T_r V_n$.
\end{itemize}

For each $n\in\N$, we define the set
\begin{equation*}
  \SG{n} = \bigcup L_n = \bigcup_{j=1}^{3^n}\Delta_{n,j} \text{.}
\end{equation*}

We observe for later use that, by induction, for all $n\in\N$:
\begin{enumerate}
\item $\Delta_{n,j}$ is an equilateral triangle whose edges have length $\frac{1}{2^{n}}$;
\item the set of all vertices of the triangles in $L_n$ is $V_n$;
\item if $j \in \{1,\ldots,3^n\}$, then
  \begin{equation*}
    \Delta_{n,j}\subseteq \bigcup_{r=1}^3 \Delta_{n+1, 3(j-1) + r}\subseteq \co{\Delta_{n,j}} \text{;}
  \end{equation*}
\item $\SG{n} \subseteq \SG{n+1}$;
\item if $j,k \in \{1,\ldots,3^n\}$, and $j\not=k$, then $\Delta_{n,j}\cap\Delta_{n,k}$ is empty or a singleton containing the common vertex to both triangles;
\item $\SG{n}$ is path connected.
\end{enumerate}

All of these observations above follow from the fact that affine bijections preserve triangles, scale length (here, by $\frac{1}{2}$), and preserve intersections; they all can be proved by induction.

\medskip

Our construction implies the following key metric property:
\begin{equation*}
  \forall n,m \in \N \quad m\geq n \implies \Haus{\norm{\cdot}{\R^2}}\left(\SG{m},V_n\right) \leq \frac{1}{2^{n}}\text.
\end{equation*}

\medskip

We now define the \emph{\SiepG}, using the notation of this section.
\begin{definition}
  The {\SiepG} $\SG{\infty}$ is the closure of $\bigcup_{n\in\N}\SG{n}$.
\end{definition}

We further define $V_\infty = \bigcup_{n\in\N}V_n$.

\medskip

By construction, the {\SiepG} is compact. It can also easily be checked that $\SG{\infty}$ is invariant under the map
\begin{equation*}
  X \subseteq\R^2 \mapsto \bigcup_{r=0}^2 T_r X \text{;}
\end{equation*}
so it is indeed the attractor of the iterated functions system $(T_0,T_1,T_2)$ and is, in fact, a self-similar set (see, e.g., \cite{sixquarter}); namely,
\begin{equation*}
  \SG{\infty} = \bigcup_{r=0}^2 T_r \SG{\infty} \text{.}
\end{equation*}

\medskip

It is immediate from our construction that
\begin{equation*}
  \forall n \in \N, \quad \Haus{\norm{\cdot}{\R^2}}(\SG{\infty},V_n) \leq \frac{1}{2^{n}}\text,
\end{equation*}
where we have used Notation \ref{Hausdorff-notation}. The set $V_\infty$ is therefore dense in $\SG{\infty}$. We also note that, by construction,
\begin{equation}\label{sg-cv-eq}
  \forall n\in\N, \quad \Haus{\norm{\cdot}{\R^2}}\left(\SG{n},\SG{\infty}\right) \leq \frac{1}{2^n} \text{;}
\end{equation}
hence, $\SG{\infty}$ is the limit of $(\SG{n})_{n\in\N}$ for $\Haus{\norm{\cdot}{\R^2}}$.

\bigskip

However, for each $n\in\Nbar$, we will work with the intrinsic metric on the set $\SG{n}$, which is different from the restriction to $\SG{n}$ of the metric of $\R^2$. Indeed, since for each $n\in\Nbar$, the set $\SG{n}$ is path connected, we can define for any two $x,y \in \SG{n}$,
\begin{multline*}
  d_n(x,y) = \\
  \inf\left\{ \mathrm{length}(\gamma) : \gamma : [0,1]\rightarrow \SG{n}, \gamma(0) = x, \gamma(1) = y, \text{$\gamma$ continuous} \right\}\text.
\end{multline*}
Here and henceforth (see, e.g., \cite{Gromov,sixtytwothreequarter}), for any natural number $p\geq 1$, and for any curve in a compact subset $X$ of $\R^p$, i.e., a  continuous map $\gamma : [0,1]\rightarrow X$, we define the \emph{length} of $\gamma$ by
\begin{multline*}
  \mathrm{length}(\gamma) = \\
  \sup\left\{ \sum_{j=0}^k \norm{\gamma(t_j) - \gamma(t_{j+1})}{\R^p} : k\in\N, 0=t_0 < t_1 < \ldots < t_k = 1\right\}\text{,}
\end{multline*}
allowing for the value $\infty$ for $\mathrm{length}(\gamma)$ (the curves with finite length are called \emph{rectifiable}), and where $\norm{\cdot}{\R^p}$ is the Euclidean norm on $\R^p$.

By the Hopf--Rinow theorem for length spaces \cite{Gromov,sixtytwothreequarter}, and since $\SG{n}$ is path connected and compact, there exists a continuous function $\gamma : [0,1]\rightarrow \SG{n}$ which is a geodesic from $\gamma(0) = x$ to $\gamma(1) = y$; i.e., for all $t\leq t' \in [0,1]$, we have $\mathrm{length}(\gamma|[t,t']) = d_n(\gamma(t),\gamma(t')) = \lambda|t-t'|$, where $\lambda=\mathrm{length}(\gamma)$. This last equality shows that $\gamma$ is injective. We will use these observations in several proofs below.

\bigskip

In general, the canonical inclusion of $\SG{n}$ into $\SG{\infty}$ is not an isometry from $(\SG{n},d_n)$ to $(\SG{\infty},d_\infty)$. For instance, we see that
\begin{equation*}
  d_0\left(v_2,\begin{pmatrix} \frac{1}{2} \\ 0 \end{pmatrix} \right) = \frac{3}{2}\text{, yet }
  d_\infty \left(v_2,\begin{pmatrix} \frac{1}{2} \\ 0 \end{pmatrix} \right)= d_1\left(v_2,\begin{pmatrix} \frac{1}{2} \\ 0 \end{pmatrix} \right) = 1 \text{.}
\end{equation*}
This simple computation also shows that, of course, for any $n\in\Nbar$, the space $(\SG{n},d_n)$ is not a metric subspace of $\R^2$ (with its usual metric); namely, while $\SG{n}$ is a subset of $\R^2$, the restriction to $\SG{n}$ of the usual Euclidean metric on $\R^2$ is not equal to $d_n$. It is obvious, nonetheless, that
\begin{equation*}
  \forall n\in\Nbar, \quad \forall x,y \in \SG{n}, \quad \norm{x-y}{\R^2} \leq d_\infty(x,y) \leq d_n(x,y) \text{.}
\end{equation*}

However, we make the following observation, which will prove helpful later in this work.

\begin{lemma}\label{vertices-lemma}
  For all $n\in\N$, the metrics $d_n$ and $d_\infty$ agree on $V_n$. In fact, any geodesic between two elements of $V_n$ in $(\SG{\infty},d_\infty)$ is also a geodesic in $(\SG{n},d_n)$.
\end{lemma}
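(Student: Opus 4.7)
My plan is to prove the lemma by induction on $n$, aiming for the stronger conclusion that any $d_\infty$-geodesic between two points of $V_n$ has image contained in $\SG{n}$. Since $d_\infty \leq d_n$ is automatic (any curve in $\SG{n}$ is a curve in $\SG{\infty}$), this stronger statement yields both the metric equality on $V_n$ and the fact that the geodesic is a $d_n$-geodesic: the curve, viewed in $\SG{n}$, has length $d_\infty(x,y)$, which then cannot be strictly less than $d_n(x,y)$.

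For the base case $n=0$, any two distinct $v_i, v_j \in V_0$ satisfy $\|v_i - v_j\|_{\R^2} = 1$, so by the Euclidean lower bound on curve lengths recalled just before the lemma, $d_\infty(v_i, v_j) \geq 1$; since the edge of $\Delta_{0,1}$ joining them lies in $\SG{0}$ and has length $1$, one gets $d_\infty = d_0 = 1$ on $V_0$. A $d_\infty$-geodesic thus has length equal to the Euclidean distance of its endpoints, and the equality case of the Euclidean triangle inequality, applied to increasingly fine partitions of $[0,1]$, forces the geodesic to coincide with the straight segment between them, which is the edge of $\Delta_{0,1}$ sitting inside $\SG{0}$.

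For the inductive step, assume the result at all levels strictly less than $n$. Take $x,y \in V_n$ with a $d_\infty$-geodesic $\gamma$ and suppose for contradiction that $\gamma^{-1}(\SG{\infty}\setminus\SG{n})$ contains a (maximal) open interval $(a,b)$. The connected components of $\SG{\infty}\setminus\SG{n}$ are precisely the open sets $G_{n,j}\setminus\Delta_{n,j}$, where $G_{n,j}:=\SG{\infty}\cap\co{\Delta_{n,j}}$ is an affine $2^{-n}$-scaled isometric copy of $\SG{\infty}$ with outer triangle $\Delta_{n,j}$; hence $\gamma((a,b))\subseteq G_{n,j_0}\setminus\Delta_{n,j_0}$ and $\gamma(a),\gamma(b)\in\Delta_{n,j_0}$ for some $j_0$. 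The key step is then to show that in fact $\gamma(a),\gamma(b)\in V_n$, i.e.\ they are vertices of $\Delta_{n,j_0}$: topologically, if $\gamma(a)$ lay in the relative interior of an edge of $\Delta_{n,j_0}$, then $\SG{n}$ would be locally a single arc at $\gamma(a)$, while the transition of $\gamma$ into the interior of $G_{n,j_0}$ requires $\gamma(a)$ to be a branch point of $\SG{\infty}$ — i.e.\ a vertex $V_m$ for some $m > n$; then pulling the excursion back through $T_{n,j_0}^{-1}$ and iterating the inductive hypothesis at coarser scales inside $G_{n,j_0}$ reduces the excursion endpoints to the coarsest vertex set $V_n$.

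Once both $\gamma(a),\gamma(b)$ are vertices of $\Delta_{n,j_0}$, their Euclidean distance equals the edge length $2^{-n}$, and the corresponding edge of $\Delta_{n,j_0}$ provides a path in $\SG{n}$ of that length, so $\mathrm{length}(\gamma|_{[a,b]}) = d_\infty(\gamma(a),\gamma(b)) = 2^{-n} = \|\gamma(a)-\gamma(b)\|_{\R^2}$. The equality case of the Euclidean triangle inequality then forces $\gamma|_{[a,b]}$ to be the straight segment between these two vertices, which lies entirely in $\Delta_{n,j_0}\subseteq \SG{n}$, contradicting the excursion assumption. The main obstacle I expect is making the \emph{zoom-in} step justifying $\gamma(a),\gamma(b)\in V_n$ fully rigorous without circularity; a clean way to handle it is a minimal-counterexample argument, selecting an excursion whose endpoints sit at the shallowest possible level in the vertex hierarchy $V_\infty=\bigcup_k V_k$ and exploiting the self-similarity of the gasket to descend.
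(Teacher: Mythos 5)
Your overall strategy --- locate a detour of the geodesic outside $\SG{n}$ and replace it by a straight edge of some $\Delta_{n,j}$, via the equality case of the Euclidean triangle inequality --- is the same as the paper's, and your endgame (two distinct vertices of $\Delta_{n,j_0}$ at Euclidean distance $2^{-n}$ joined by a sub-arc of length $2^{-n}$ must be joined by the straight edge, which lies in $\SG{n}$) is sound. However, the step you yourself flag as the main obstacle is a genuine gap, not a technicality. The claim that the endpoints $\gamma(a),\gamma(b)$ of a maximal excursion out of $\SG{n}$ lie in $V_n$ is false for general rectifiable curves: an excursion can begin and end at points of $V_m\setminus V_n$ for any $m>n$. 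For instance, a curve may travel along the bottom edge of $\Delta_{n,j_0}$ to its midpoint, detour along the inner edge of the bottom-left level-$(n+1)$ cell to the midpoint of the left edge of $\Delta_{n,j_0}$, and continue; this is a maximal excursion whose endpoints are in $V_{n+1}\setminus V_n$. Proving the claim \emph{for geodesics} is essentially the content of the lemma itself, and your induction on $n$ cannot reach it: rescaling the excursion by $T_{n,j_0}^{-1}$ converts level-$m$ data ($m>n$) into level-$(m-n)$ data inside a full copy of $\SG{\infty}$, and $m-n$ is not bounded by $n-1$, so the inductive hypothesis (levels strictly below $n$) does not apply; the ``minimal counterexample at the shallowest level'' device is not carried out, and it is not clear that it terminates. (A smaller inaccuracy: the sets $G_{n,j}\setminus\Delta_{n,j}$ are relatively open and closed in $\SG{\infty}\setminus\SG{n}$, hence unions of components, but are not themselves connected.)

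The paper closes exactly this gap by tracking exits from the two-dimensional cells $G_{n,r}=\SG{\infty}\cap\co{\Delta_{n,r}}$ rather than from the one-dimensional skeleton $\SG{n}$. Since $\SG{\infty}=\bigcup_r G_{n,r}$ and two distinct cells meet in at most one common point, which is a vertex in $V_n$, a continuous curve can enter or leave a cell only through one of its three outer vertices. Hence if $\gamma$ passes through some $c\notin\SG{n}$ lying in $\co{\Delta_{n,r}}$, then $\gamma$ must pass through two vertices $a,b$ of $\Delta_{n,r}$ --- automatically elements of $V_n$ --- with $c$ in between, and the portion of $\gamma$ from $a$ to $b$ has length at least $\norm{a-c}{\R^2}+\norm{c-b}{\R^2}>\norm{a-b}{\R^2}$, while the edge $[a,b]$ of $\Delta_{n,r}$ is available inside $\SG{n}$; so $\gamma$ is not a geodesic. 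Replacing ``maximal excursion out of $\SG{n}$'' by ``passage through the cell $G_{n,r}$'' is the missing idea that would repair your argument.
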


\begin{proof}
  As seen above, for any fix $n\in\N$, and for all $j,k \in \{1,\ldots,3^n\}$ with $j\not=k$, the triangles $\Delta_{n,j}$ and $\Delta_{n,k}$ intersect at most at one point, which is then a common vertex of both triangles. Thus, a continuous curve from a point in $\Delta_{n,j}$ to a point $\Delta_{n,k}$ must pass by a vertex of each of these two triangles, since $\SG{n} = \bigcup_{j=1}^{3^n}\Delta_{n,j}$.
  
  Let $v,w \in V_n$ be two distinct vertices of $\SG{n}$. If there exists $j\in \{1,\ldots,3^n\}$ such that $v,w \in \Delta_{n,j}$, then it is immediate that $d_\infty(v,w) = d_n(v,w) = 2^{-n}$ because $v$ and $w$ are joined in $\SG{\infty}$ and $\SG{n}$ by an edge of a triangle in $L_n$, and as a result of the triangle inequality.

  Now, suppose $v\in\Delta_{n,j}$ and $w \in \Delta_{n,k}$, for some $j,k \in \{1,\ldots, 3^n\}$ with $j\not=k$. Let $\gamma$ be a rectifiable curve in $\SG{\infty}$ from $v$ to $w$ which is not contained in $\SG{n}$; so there exists $c \in V_\infty\setminus \SG{n}$ also within the range of $\gamma$. Now, $c$ lies in the convex hull of $\Delta_{n,r}$, for some $r \in \{1,\ldots,3^n\}$. Thus, $\gamma$ had to pass by two vertices $a,b$ of $\Delta_{n,r}$: if $r=j$ or $k$, then one of these vertices is $v$ or $w$, and somehow we must exit $\Delta_{n,r}$, which can only occur via a vertex. Otherwise, we entered and exited $\Delta_{n,r}$, which can only occur via a vertex. It follows that the part of $\gamma$ between $a$ and $b$ is then longer than the straight line $[a,b]$ and, hence, $\gamma$ is not a geodesic from $v$ to $w$.

  This completes the proof of the lemma, by contraposition.
\end{proof}

% We record a key consequence of Lemma \ref{vertices-lemma}. We saw with Equation (\ref{sg-cv-eq}) that $\SG{\infty}$ is the limit of the sequence of compact subsets $(\SG{n})_{n\in\N}$ in $\R^2$ for the Hausdorff distance $\Haus{\norm{\cdot}{\R^2}}$. However, we work in this paper with the intrinsic metrics on these sets, namely, with the metric spaces $(\SG{n},d_n)$, for $n\in\Nbar$. As seen above, $(\SG{n},d_n)$ is neither a metric subspace of $(\SG{\infty},d_\infty)$ nor of $\R^2$.

Without any obvious canonical space to isometrically embed $(\SG{n},d_n)$ and $(\SG{\infty},d_\infty)$ into, we will discuss the notion of convergence in the sense of the Gromov--Hausdorff distance $\GH$. Our key assumption is that, for every $n\in\N$, the spaces $(V_n,d_n)$ and $(V_n,d_\infty)$ are actually the same metric space, by Lemma \ref{vertices-lemma}. It is then easy to check that (with the use of Notation \ref{Hausdorff-notation})
\begin{equation}\label{v_n-to-sgn-eq}
  \Haus{d_n}(\SG{n},V_n) \leq 2^{-n}\text{ and }\Haus{d_\infty}(\SG{\infty},V_n) \leq 2^{-n}\text{;}
\end{equation}
so that
\begin{align*}
  \GH((\SG{n},d_n),(\SG{\infty},d_\infty))
  &\leq \Haus{d_n}(\SG{n},V_n) + \GH((V_n,d_n),(V_n,d_\infty)) \\
  &\quad + \Haus{d_\infty}(V_\infty,\SG{\infty}) \\
  &\leq 2^{-n} + 0 + 2^{-n} = 2^{-n+1} \text{,}
\end{align*}
and hence,
\begin{equation}
  \lim_{n\rightarrow\infty} \GH((\SG{n},d_n),(\SG{\infty},d_\infty)) = 0\text.
\end{equation}

The picture painted in this section can be fruitfully generalized to include such interesting examples as the harmonic gasket, as we shall see in the next subsections.

\subsection{Piecewise $C^1$-Fractal Curves}

The {\SiepG} is an example of a class of compact length subspaces introduced in \cite{Lapidus14} as examples of spaces for which a natural spectral triple may be constructed.

For the following definition, the endpoints of a curve $\gamma$ are $\gamma(0)$ and $\gamma(1)$. A curve $\gamma$ is said to be a \emph{concatenation} of a sequence $(\gamma_j)_{j\in\N}$ of curves when there exists a strictly increasing sequence $(t_j)_{j\in\N}$ in $[0,1]$, with $t_0 = 0$ and $\lim_{j\rightarrow\infty}t_j=1$, such that, for each $j\in\N$, the curve $\gamma$ restricted to $[t_j,t_{j+1}]$ is the map $t \in [0,1] \mapsto \gamma_j\left( \frac{ t - t_j }{ t_{j+1} - t_j }  \right)$. The concatenation of finitely many curves is defined similarly, by using a finite subdivision $t_0 = 0 < t_1 < \ldots < t_k = 1$ of $[0,1]$.

\begin{definition}\label{fractal-curve-def}
  A \emph{piecewise $C^1$-fractal curve} $X$ is a compact path connected subset of $\R^n$ such that, for some sequence $(C_j)_{j\in\N}$ of rectifiable $C^1$-curves in $\R^n$ with $\lim_{j\rightarrow\infty} \mathrm{length}(C_j) = 0$, the following assertions hold:
  \begin{enumerate}
  \item $X = \text{the closure of } \bigcup_{j \in \N} \range{C_j}$,
  \item there exists a dense subset $\mathcal{B}$ of $X$ (for the topology induced by the geodesic distance on $X$) consisting of endpoints of the curves in the sequence $(C_j)_{j\in\N}$ and such that for all $p \in \mathcal{B}$ and $q\in X$, one of the geodesics from $p$ to $q$ in $X$ is a curve obtained as a concatenation of a possibly finite subsequence $(C_j)_{j \in \N}$.
  \end{enumerate}

  \noindent The sequence $(C_j)_{j\in\N}$ is called a \emph{parametrization} of $X$.
\end{definition}

The {\SiepG} is a piecewise $C^1$-fractal curve given by a parametrization using the edges of the triangles $\Delta_{n,r}$, for all $n\in\N$ and $r \in \{1,\ldots,3^n\}$.

\begin{theorem}[{\cite[Proposition 2]{Lapidus14}}]\label{SiepG-param-thm}
  The {\SiepG} $\SG{\infty}$ is a piecewise $C^1$-fractal curve, with parametrization $(R_j)_{j\in\N}$ given, for each $j \in \N$, by either of the two affine functions from $[0,1]$ onto 
  \begin{itemize}
    \item the bottom edge of $\Delta_{n,j}$, if $j = \varkappa(n,r)$ for some $(n,r) \in \Xi$,
    \item the right edge of $\Delta_{n,j}$, if $j = \varkappa(n,r) + 1$ for some $(n,r)\in\Xi$,
    \item the left edge of $\Delta_{n,j}$, if $j = \varkappa(n,r) + 2$ for some $(n,r) \in \Xi$,
  \end{itemize}
  with
  \begin{equation*}
    \Xi := \left\{ (n,r) \in \N^2 : n\in \N, r \in \{0,\ldots,3^n-1\} \right\}
  \end{equation*}
  and $\varkappa(n,r) := 3\left(\sum_{k=0}^{n-1} 3^k + r\right)$, for all $(n,r) \in \Xi$; by convention $\chi(0,0) = 0$.
\end{theorem}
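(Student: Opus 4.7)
The plan is to verify directly the two conditions in Definition \ref{fractal-curve-def} for the stated sequence $(R_j)_{j\in\N}$ with candidate dense set $\mathcal{B} := V_\infty$.

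The bookkeeping is routine. Each $R_j$ is an affine, hence $C^1$, parametrization of an edge of some triangle $\Delta_{n,r}$, whose Euclidean length is $2^{-n}$. The indexing $\varkappa$ groups the $3^{n+1}$ edges arising at level $n$ into a single consecutive block following all lower levels, so $\mathrm{length}(R_j)\to 0$ as $j\to\infty$. By construction, $\bigcup_{j\in\N}\range{R_j} = \bigcup_{n\in\N}\SG{n}$, whose closure in $\R^2$ is $\SG{\infty}$; and $V_\infty$, which is dense in $(\SG{\infty}, d_\infty)$ by the bound $\Haus{d_\infty}(\SG{\infty},V_n) \leq 2^{-n}$ established above, consists precisely of endpoints of the $R_j$.

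The content lies in the second axiom: for every $p\in V_\infty$ and every $q\in\SG{\infty}$, some geodesic from $p$ to $q$ in $(\SG{\infty},d_\infty)$ must be a (possibly countable) concatenation of a subsequence of $(R_j)$. When $q\in V_\infty$, fix $n$ so that $p,q\in V_n$ and apply Lemma \ref{vertices-lemma} to obtain a geodesic contained in $\SG{n}$. A sharpening of the argument of that lemma shows any such geodesic traverses each triangle $\Delta_{n,j}$ it meets along a full edge: entering or leaving $\Delta_{n,j}$ through a non-vertex point would allow a strict shortcut across an edge, violating minimality. Since $\SG{n}$ is the union of these edges and the endpoints $p,q$ lie in $V_n$, the geodesic is therefore a finite concatenation of $R_j$'s.

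When $q\in\SG{\infty}\setminus V_\infty$, build a geodesic inductively. The iterated function system $(T_0,T_1,T_2)$ singles out a nested chain of triangles $\Delta_{0,j_0}\supseteq\Delta_{1,j_1}\supseteq\cdots$ with $q\in\co{\Delta_{n,j_n}}$ for each $n$; fix such a chain. Choose a vertex $v_n$ of each $\Delta_{n,j_n}$ so that $v_n\to q$, whence $d_\infty(p,v_n)\to d_\infty(p,q)$ by continuity of the distance. The previous case produces, for each $n$, a finite edge-geodesic $\alpha_n$ from $p$ to $v_n$. Exploiting self-similarity, arrange that each $\alpha_{n+1}$ extends $\alpha_n$ by at most two edges at level $n+1$ sitting inside $\Delta_{n+1,j_{n+1}}$; concatenating these extensions yields an infinite sequence of $R_j$'s whose total length equals $d_\infty(p,q)<\infty$ and whose limiting point is $q$, furnishing the required geodesic.

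The main obstacle will be the compatibility step in the previous paragraph --- arranging that the edge-geodesics $\alpha_n$ stabilize on initial segments of increasing length so that a well-defined infinite concatenation emerges. The crux is that once a geodesic from $p$ enters the ``terminal'' triangle $\Delta_{n,j_n}$, the residual problem of travelling to $q$ inside $\Delta_{n,j_n}\cap\SG{\infty}$ is, up to rescaling by $2^{-n}$ via an appropriate composition of $T_0,T_1,T_2$, another instance of the original problem (vertex to point in $\SG{\infty}$). This scale invariance supplies the induction forcing stabilization of the $\alpha_n$ and certifies the minimality of the limiting concatenation.
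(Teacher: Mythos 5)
The paper offers no proof of this statement --- it is imported verbatim as \cite[Proposition 2]{Lapidus14} --- so your proposal can only be judged on its own terms. The routine parts are fine: each $R_j$ is affine hence $C^1$, the level-$n$ block of indices carries edges of length $2^{-n}$ so $\mathrm{length}(R_j)\to 0$, the union of ranges is $\bigcup_n\SG{n}$ with closure $\SG{\infty}$, and $V_\infty$ is dense. The vertex-to-vertex case is also essentially correct, though your phrasing is slightly off: one does not need to rule out ``entering a triangle through a non-vertex point'' (distinct triangles of $L_n$ already meet only at vertices); the relevant observation is that between consecutive vertex-crossing times a geodesic in $\SG{n}$ lies in the closure of a single open edge and must traverse it monotonically, so it is a finite concatenation of full edges. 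This is the same mechanism the paper itself uses inside the proof of Theorem \ref{Lip-is-Lipschitz-thm}.

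The genuine gap is the case $q\in\SG{\infty}\setminus V_\infty$, which you correctly identify as ``the main obstacle'' but then only assert rather than prove. Choosing an arbitrary sequence of vertices $v_n\to q$ with $d_\infty(p,v_n)\to d_\infty(p,q)$ does not by itself produce nested geodesics: for $\alpha_{n+1}$ to extend $\alpha_n$ you need the additivity $d_\infty(p,v_{n+1})=d_\infty(p,v_n)+d_\infty(v_n,v_{n+1})$, i.e.\ that $v_n$ lies on a geodesic from $p$ to $v_{n+1}$, and nothing in your construction guarantees this. What must actually be shown is that some geodesic $\gamma$ from $p$ to $q$, once it enters the terminal nested triangle $\Delta_{n,j_n}$, can be modified so as never to leave it, and that its entry point $w_n$ is a vertex satisfying $d_\infty(p,q)=d_\infty(p,w_n)+d_\infty(w_n,q)$; the sequence $(w_n)_n$ (not an arbitrary $(v_n)_n$) is then automatically compatible, and the finite edge-geodesics from $p$ to $w_n$ nest and converge to the required concatenation. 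Your appeal to self-similarity is the right heuristic for why the residual problem inside $\Delta_{n,j_n}$ rescales to the original one, but as written the induction ``forcing stabilization'' is announced, not carried out; one must also check the boundary subcase where $q$ is a non-dyadic point of an edge, so that the tail of the concatenation consists of infinitely many full edges accumulating at $q$ from one side. Until these points are supplied, condition (2) of Definition \ref{fractal-curve-def} is not verified.
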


\begin{convention}
  We will refer to the parametrization of the {\SiepG} $\SG{\infty}$ in Theorem \ref{SiepG-param-thm} as the \emph{standard parametrization} of $\SG{\infty}$.
\end{convention}

Besides the {\SiepG}, another very important example of a piecewise $C^1$-fractal curve is given by the \emph{harmonic gasket}, which is a fractal with what can be called a ``measurable Riemannian geometry'', an idea initiated and developed by Kusuoka \cite{Kusuoka89} and later, by Kigami \cite{Kigami08}; see also \cite{Lapidus94,Lapidus97,ninehalf}.

The natural Dirichlet form over the {\SiepG} is the pointwise limit, on some dense subspace $\mathcal{D}$ of $C(\SG{\infty})$, of the sequence of quadratic forms $\left(\mathcal{E}_n\right)_{n\in\N}$, defined, for all $n\in\N$, and for all functions $u \in C(\SG{\infty})$, by
\begin{equation*}
  \mathcal{E}_n(v) = \left(\frac{5}{3} \right)^n \sum_{\substack{p,q \in V_n \\ p \underset{n}{\sim} q}} \left( u(p)-u(q) \right)^2 \text.
\end{equation*}
Thus, there exists a operator $\Delta$, defined on $\mathcal{D}$ and valued in $C(\SG{\infty}$, such that
\begin{equation*}
  \forall u \in \mathcal{D} \quad \mathcal{E}(u) = \inner{\Delta u}{u}{} \text.
\end{equation*}
As $\mathcal{E}$ is indeed the Dirichlet form associated with the analogue of the Brownian motion on the {\SiepG} (see \cite{zero,Kusuoka89,fifteenhalf} and refrences therein), the operator $\Delta$ can be seen as an analogue of the Laplacian for the {\SiepG}. Kigami then proved in \cite{Kigami93,thirteenhalf} that, given any function $f : V_0 \rightarrow\R$, there exists a unique function $u \in C(\SG{\infty}$ such that $\Delta u = 0$ and $u|0 = f$ --- the analogue of the Poisson problem for the {\SiepG} always has a unique solution.

In particular, there exists unique harmonic functions $u_1,u_2,u_3$ on $\SG{\infty}$ such that
\begin{equation*}
  u_j(v_k)
  =
  \begin{cases}
    1 \text{, if $j = k$,}\\
    0 \text{, otherwise.}
  \end{cases}
\end{equation*}

We note that $u_1 + u_2 + u_3 = 1$ on $\SG{\infty}$, by the uniqueness of the solution to the Poisson problem on $\SG{\infty}$; hence, $\{ u_1,u_2,u_3\}$ is a partition of unity in $C(\SG{\infty})$. The function
\begin{equation*}
  \Phi : x \in \SG{\infty} \longmapsto \frac{\sqrt{2}}{2} \left( \begin{pmatrix} u_1(x) \\ u_2(x) \\ u_3(x) \end{pmatrix} - \begin{pmatrix} 1 \\ 1 \\ 1 \end{pmatrix} \right)
\end{equation*}
is a continuous injection on $\SG{\infty}$, and thus an homeomorphism from the {\SiepG} $\SG{\infty}$ onto its image in the affine subspace of $\R^3$,
\begin{equation*}
  \left\{ (x,y,z) \in \R^3 : x+y+z = 1 \right\}\text,
\end{equation*}
viewed as a subset of $\R^2$. By definition, the \emph{harmonic gasket} $\HG{\infty}$ is the image of $\SG{\infty}$ by $\Phi$ (see \cite{elevenhalf}):
\begin{equation*}
  \HG{\infty} = \Phi\left( \SG{\infty} \right) \text{.}
\end{equation*}

\begin{figure}
  \centering
  \includegraphics[height=2in]{./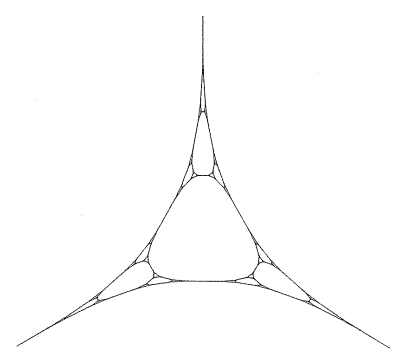}
  \caption{The harmonic gasket}
\end{figure}

We recall that, by \cite{Lapidus14}, we have the following result.
\begin{theorem}[{\cite[Proposition 3]{Lapidus14}}]\label{harmonic-gasket-c1-thm}
  The harmonic gasket $\HG{\infty}$ is a piecewise $C^1$-fractal curve, with parametrization $\left(\Phi\circ R_j\right)_{j\in\N}$, where $(R_j)_{j \in \N}$ is the standard parametrization of the {\SiepG}  $\SG{\infty}$.
\end{theorem}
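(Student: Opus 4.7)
The plan is to verify the two conditions of Definition \ref{fractal-curve-def} for $\HG{\infty}$ with the proposed parametrization $(\Phi\circ R_j)_{j\in\N}$, by transferring the structure from $\SG{\infty}$ through the homeomorphism $\Phi$ while being attentive to the fact that $\Phi$ is not an isometry for the ambient Euclidean metrics.

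First, the topological condition is straightforward. Since $\Phi\colon\SG{\infty}\to\HG{\infty}$ is a homeomorphism and $\SG{\infty}$ is compact, $\Phi$ commutes with closure and with unions of ranges, so Theorem \ref{SiepG-param-thm} immediately gives
\begin{equation*}
\HG{\infty} \;=\; \Phi(\SG{\infty}) \;=\; \Phi\!\left(\overline{\bigcup_{j\in\N}\range{R_j}}\right) \;=\; \overline{\bigcup_{j\in\N}\range{\Phi\circ R_j}}\text{.}
\end{equation*}

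Second, I would verify that each $\Phi\circ R_j$ is a rectifiable $C^1$-curve, and that the lengths tend to $0$. Each $R_j$ is an affine parametrization of an edge of some triangle $\Delta_{n,r}$, so $\Phi\circ R_j$ is the restriction of the harmonic coordinate map $(u_1,u_2,u_3)$ to that edge, composed with an affine reparametrization. By Kigami's theory of harmonic functions on $\SG{\infty}$ (and the self-similarity of the harmonic extensions under the IFS $(T_0,T_1,T_2)$), each $u_k$ restricted to an edge is smooth enough in a suitable parameter to make $\Phi\circ R_j$ a $C^1$-curve; moreover, the energy of a harmonic function on an $n$-cell scales by the factor $(3/5)^n$, which gives a geometric bound on $\mathrm{length}(\Phi\circ R_j)$ forcing it to zero as $j\to\infty$. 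This is essentially the content for which Lapidus--Sarhad invoke the Kusuoka--Kigami measurable Riemannian structure on $\HG{\infty}$.

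Third, take $\mathcal{B}:=\Phi(V_\infty)$. Because $V_\infty$ is dense in $\SG{\infty}$ by the estimate $\Haus{\norm{\cdot}{\R^2}}(\SG{\infty},V_n)\leq 2^{-n}$ from Section 2.1, and $\Phi$ is a homeomorphism, $\mathcal{B}$ is dense in $\HG{\infty}$ for the geodesic topology. Each element of $V_\infty$ is, by construction, an endpoint of some edge of a triangle $\Delta_{n,r}$, hence an endpoint of some $R_j$, so its image under $\Phi$ is an endpoint of $\Phi\circ R_j$.

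Finally, the geodesic condition is the genuine obstacle. For $p\in\mathcal{B}$ and $q\in\HG{\infty}$, Theorem \ref{SiepG-param-thm} furnishes a geodesic $\gamma$ in $\SG{\infty}$ from $\Phi^{-1}(p)$ to $\Phi^{-1}(q)$ obtained as a concatenation of a subsequence of the $R_j$; the candidate geodesic in $\HG{\infty}$ is $\Phi\circ\gamma$, a concatenation of the corresponding $\Phi\circ R_j$. The difficulty is to show that $\Phi\circ\gamma$ is minimizing for the intrinsic path-length metric on $\HG{\infty}$. My approach would be: (i) show that $\Phi$ yields a bijection between rectifiable curves in $\SG{\infty}$ and rectifiable curves in $\HG{\infty}$, with lengths in $\HG{\infty}$ computed by integrating the harmonic energy density along edges; (ii) use the Kusuoka--Kigami framework to reduce the minimization to an edge-by-edge comparison, where any deviation from the edge-concatenated path $\gamma$ forces the lifted competitor in $\SG{\infty}$ to traverse extra edges (by the argument from the proof of Lemma \ref{vertices-lemma}, adapted to $\HG{\infty}$), and hence extra harmonic length in $\HG{\infty}$. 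This is the technical heart of the argument and is precisely where the authors defer to \cite[Proposition 3]{Lapidus14}.
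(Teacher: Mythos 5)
There is a genuine gap, and it is compounded by the fact that the paper itself does not prove this statement: Theorem \ref{harmonic-gasket-c1-thm} is \emph{recalled} from \cite[Proposition 3]{Lapidus14}, so a proof proposal has to be self-contained to add anything. Yours is not: the $C^1$-regularity of the curves $\Phi\circ R_j$ and the decay of their lengths are asserted by appeal to ``Kigami's theory'' and energy scaling without an actual argument, and the geodesic condition --- which is the entire content of the theorem --- is explicitly handed back to the very reference being proved. Your first and third steps (the closure identity $\HG{\infty}=\overline{\bigcup_j\range{\Phi\circ R_j}}$ and the density of $\mathcal{B}=\Phi(V_\infty)$, which transfer through the homeomorphism $\Phi$) are fine, but they are the easy part.

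The more serious problem is that your candidate geodesic is wrong. You propose to take a geodesic $\gamma$ in $(\SG{\infty},d_\infty)$ from $\Phi^{-1}(p)$ to $\Phi^{-1}(q)$, realized as a concatenation of edges $R_j$, and to show that $\Phi\circ\gamma$ is minimizing in $\HG{\infty}$. In general it is not: $\Phi$ distorts lengths anisotropically, so the three edges of a single triangle $\Delta_{n,r}$, all of $d_\infty$-length $2^{-n}$, acquire \emph{different} lengths under $\Phi$ (the harmonic contraction ratios differ by direction). Consequently two edge-paths of equal length in $\SG{\infty}$ --- and the paper notes that geodesics in $\SG{\infty}$ are typically highly non-unique --- generically map to paths of different lengths in $\HG{\infty}$, and the shortest edge-path in $\HG{\infty}$ between $p$ and $q$ need not be the image of any $\SG{\infty}$-geodesic. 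What must be proved instead is that \emph{some} concatenation of the curves $\Phi\circ R_j$ (selected by minimizing harmonic length, not Euclidean length upstairs) is a geodesic of $\HG{\infty}$; this requires both a vertex-crossing argument carried out directly in $\HG{\infty}$ (the analogue of Lemma \ref{vertices-lemma} for the harmonic cells, whose boundaries are curved) and a proof that within each cell the harmonic image of an edge is the shortest connection in $\HG{\infty}$ between its endpoints. Neither is supplied, so step (ii) of your plan, as formulated, would fail rather than merely remain incomplete.
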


\begin{convention}
  The sequence $(\Phi\circ R_j)_{j \in \N}$ is called the \emph{standard parametrization} of $\HG{\infty}$.
\end{convention}

Theorem \ref{harmonic-gasket-c1-thm} makes use, in particular, of the existence of a continuously differentiable (i.e., $C^1$) geodesic between any two points $x,y$ in $\SG{\infty}$. Such a geodesic is usually not unique, and is not $C^2$. In fact, there are typically infinitely many such geodesics.

\bigskip

In this work, we are concerned with certain approximations of piecewise $C^1$-fractal curves by finite unions of their constituent rectifiable curves, in a manner which includes and generalizes the spaces $\SG{n}$ approximating $\SG{\infty}$.

\begin{definition}\label{approx-sequence-def}
  Let $X$ be a piecewise $C^1$-fractal curve with parametrization $(C_j)_{j\in\N}$, as in Definition \ref{fractal-curve-def}. Then, an \emph{approximation sequence} of $X$ \emph{compatible with $(C_j)_{j\in\N}$} is a strictly increasing function $B : \N \rightarrow \N$ such that, for every $\varepsilon > 0$, there exists $N\in\N$ such that if $n\geq N$, and letting
  \begin{itemize}
  \item $X_n = \bigcup_{j = 1}^{B_n} \range{C_j}$,
  \item $V_n$ be the set of all endpoints $\{ C_j(0),C_j(1) : j = 1,\ldots,B_n \}$ of the curves $R_1$,\ldots,$R_{B_n}$,
  \item $d_n$ be the geodesic distance on $X_n$ (in particular, $X_n$ is path connected),
  \end{itemize}
  the following properties hold:
  \begin{enumerate}
  \item the restriction of $d_\infty$ to $V_n\times V_n$ is $d_n$,
  \item $\Haus{d_n}(V_n,X_n) < \varepsilon$.
  \end{enumerate}
\end{definition}
Of course, geometrically, and using the notation of Definition \ref{approx-sequence-def}, we mean to approximate the piecewise $C^1$-fractal curve $X$ by the sequence of subsets $(X_n)_{n\in\N}$ of $X$. But it will be very helpful to keep track of the constituent curves.

Our prototype for Definition \ref{approx-sequence-def} is given by the following theorem.

\begin{theorem}\label{SiepG-approx-seq-thm}
  The sequence $\left(\frac{3}{2}\left(3^{n+1}-1\right)\right)$ is an approximation sequence for the {\SiepG} $\SG{\infty}$ compatible with its standard parametrization.
\end{theorem}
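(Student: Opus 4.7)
The plan is to observe that the prescription $B_n = \frac{3}{2}(3^{n+1}-1)$ is precisely the number of edges among all the triangles $\Delta_{k,j}$ with $k\in\{0,\ldots,n\}$ and $j\in\{1,\ldots,3^k\}$, and then to reduce both clauses of Definition \ref{approx-sequence-def} to results already established for $\SG{\infty}$ in Subsection 2.1.

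First, verify the arithmetic identity
\begin{equation*}
  \frac{3}{2}\left(3^{n+1}-1\right) = 3\sum_{k=0}^{n} 3^k = \sum_{k=0}^{n} 3^{k+1}\text{,}
\end{equation*}
so $B_n$ is exactly the total number of edges of triangles at levels $0,1,\ldots,n$ (there are $3^k$ triangles at level $k$, each contributing $3$ edges). Next, from the bookkeeping in Theorem \ref{SiepG-param-thm} — where at each level $n$ the parametrization visits the bottom/right/left edge of every triangle $\Delta_{n,j}$ — the curves $R_1,\ldots,R_{B_n}$ are (up to reindexing) the affine parametrizations of precisely these edges. Therefore, using the notation of Definition \ref{approx-sequence-def} for the candidate approximating sets,
\begin{equation*}
  X_n \;=\; \bigcup_{j=1}^{B_n}\range{R_j} \;=\; \bigcup_{k=0}^n \SG{k} \;=\; \SG{n}\text{,}
\end{equation*}
where the last equality uses the nesting $\SG{k}\subseteq\SG{k+1}$ already recorded in Subsection 2.1. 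The endpoint set $V_n$ of Definition \ref{approx-sequence-def} then coincides with the vertex set $V_n$ of Subsection 2.1, and the geodesic distance on $X_n$ is the metric $d_n$ introduced there.

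With these identifications, property (1) of Definition \ref{approx-sequence-def} — that $d_\infty$ restricted to $V_n\times V_n$ coincides with $d_n$ — is exactly the content of Lemma \ref{vertices-lemma}. Property (2) is immediate from the bound $\Haus{d_n}(V_n,\SG{n})\leq 2^{-n}$ recorded in Expression (\ref{v_n-to-sgn-eq}): for any $\varepsilon > 0$, any $N\in\N$ with $2^{-N}<\varepsilon$ does the job. Lastly, $B$ is strictly increasing because $B_{n+1}-B_n = 3^{n+2} > 0$, so $B$ is a legitimate index function in the sense of Definition \ref{approx-sequence-def}.

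The only real obstacle is bookkeeping rather than analysis: one must carefully chase the indexing convention of Theorem \ref{SiepG-param-thm} to confirm that truncating the standard parametrization after exactly $B_n$ curves collects all edges of all triangles at levels $\leq n$, with no duplication and no omission. Once that identification is in hand, everything else is a direct citation of Lemma \ref{vertices-lemma} and Expression (\ref{v_n-to-sgn-eq}).
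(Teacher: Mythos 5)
Your proposal is correct and follows essentially the same route as the paper: identify $\bigcup_{j=1}^{B_n}\range{R_j}$ with $\SG{n}$ via the geometric-sum count of edges, then invoke Lemma \ref{vertices-lemma} for Condition (1) and Expression (\ref{v_n-to-sgn-eq}) for Condition (2). In fact your write-up is slightly more careful than the paper's one-line proof, which cites the two ingredients against the wrong conditions (it attributes Condition (1) to Equation (\ref{v_n-to-sgn-eq}) and Condition (2) to Lemma \ref{vertices-lemma}) and has a small index slip in the geometric sum; your assignment and your identity $\frac{3}{2}\left(3^{n+1}-1\right)=3\sum_{k=0}^{n}3^k$ are the correct ones.
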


\begin{proof}
  Let $B_n = 3\frac{3^{n+1}-1}{2} = 3\sum_{j=1}^{n} 3^j$. For each $n\in\N$, the set $\bigcup_{j=1}^{B_n} R_j$ is the set $\SG{n}$. Thus, Condition (1) of Definition \ref{approx-sequence-def} is satisfied, in light of Equation (\ref{v_n-to-sgn-eq}), and Condition 2 follows from Lemma \ref{vertices-lemma}. 
\end{proof}

We can then use the approximation sequence for the standard parametrization of $\SG{\infty}$ in order to obtain an approximation sequence of the harmonic gasket. The approximation of $\HG{\infty}$ will be obtained by means of the following sets:
\begin{convention}
  For each $n \in \N$, we set $\HG{n} = \Phi(\SG{n})$.
\end{convention}

\begin{theorem}\label{HarmG-approx-seq-thm}
  The sequence $\left(\frac{3}{2}\left( 3^{n+1} - 1 \right)\right)_{n\in\N}$ is an approximation sequence for $\HG{\infty}$ adapted to the standard parametrization of $\HG{\infty}$.
\end{theorem}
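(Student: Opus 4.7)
The plan is to mirror the proof of Theorem \ref{SiepG-approx-seq-thm}, transferring its combinatorial skeleton to the harmonic gasket via the homeomorphism $\Phi$ and invoking Theorem \ref{harmonic-gasket-c1-thm}. With $B_n=\frac{3}{2}(3^{n+1}-1)$, I first observe
\begin{equation*}
  \bigcup_{j=1}^{B_n}\range{\Phi\circ R_j}=\Phi\!\left(\bigcup_{j=1}^{B_n}\range{R_j}\right)=\Phi(\SG{n})=\HG{n}\text{,}
\end{equation*}
so the approximating set $X_n=\HG{n}$ is path connected (as the continuous image of $\SG{n}$) and its vertex set is $\Phi(V_n)$. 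Since $\Phi$ is a homeomorphism, the sub-cells $\Phi(\Delta_{n,r})$ for $r\in\{1,\ldots,3^n\}$ pairwise intersect only at common $\Phi$-images of vertices, exactly as in the Sierpi\'nski case. Let $d_n$ and $d_\infty$ denote the geodesic distances on $\HG{n}$ and $\HG{\infty}$, respectively.

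For Condition (2) of Definition \ref{approx-sequence-def}, Theorem \ref{harmonic-gasket-c1-thm} together with Definition \ref{fractal-curve-def} forces $\lim_{j\to\infty}\mathrm{length}(\Phi\circ R_j)=0$. Given $\varepsilon>0$, choose $N$ with $\mathrm{length}(\Phi\circ R_j)<\varepsilon$ for $j\geq N$; for $n$ sufficiently large that $B_{n-1}\geq N$, every level-$n$ edge curve (those with $j\in\{B_{n-1}+1,\ldots,B_n\}$) has length below $\varepsilon$. Since every point of $\HG{n}=\Phi(\SG{n})$ lies on some level-$n$ edge curve, it is within $d_n$-distance $\varepsilon$ of one of its endpoints, which belongs to $\Phi(V_n)$.

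For Condition (1), the inequality $d_\infty\leq d_n$ on $\Phi(V_n)\times\Phi(V_n)$ is immediate. The reverse inequality is the harmonic-gasket analog of Lemma \ref{vertices-lemma}: any $\HG{\infty}$-geodesic between two vertices $\Phi(v),\Phi(w)\in\Phi(V_n)$ can be chosen inside $\HG{n}$. By Theorem \ref{harmonic-gasket-c1-thm} and Definition \ref{fractal-curve-def}, such a geodesic admits a realization as a concatenation of curves $\Phi\circ R_{k_i}$. Transcribing the argument of Lemma \ref{vertices-lemma}, if some $k_i>B_n$ then the geodesic enters and exits some sub-cell $\Phi(\Delta_{n,r})$ at two of its vertices $\Phi(a),\Phi(b)$ (by the intersection property above), and the interior portion can be replaced, without lengthening, by the direct edge $\Phi\circ R_k$ from $\Phi(a)$ to $\Phi(b)$, yielding a geodesic with all $k_i\leq B_n$ lying in $\HG{n}$.

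The hardest part is justifying this replacement step, which for $\SG{\infty}$ followed at once from the planar Euclidean triangle inequality. The edges $\Phi\circ R_k$ of $\HG{\infty}$ are genuinely curved $C^1$ paths in $\R^3$, so the direct edge need not obviously be shortest. I plan to establish the required length estimate through the self-similar IFS structure $\Phi\circ T_i=M_i\circ\Phi$, where $M_0,M_1,M_2$ are the affine contractions of $\R^3$ determined by the harmonic coordinates $u_1,u_2,u_3$. This expresses each $\Phi(\Delta_{n,r})$ as $M_{w_1}\cdots M_{w_n}(\HG{\infty})$ and reduces the comparison to a base-case inequality in $\HG{\infty}$ itself, which I would verify by an explicit length computation on the three level-$0$ edges together with the three level-$1$ sub-cells, exploiting that any interior path between two level-$0$ vertices must traverse at least three level-$1$ edges while the direct boundary path traverses only two.
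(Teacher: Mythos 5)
Your overall skeleton --- identifying $X_n$ with $\HG{n}=\Phi(\SG{n})$, proving Condition (2) from $\mathrm{length}(\Phi\circ R_j)\to 0$, and proving Condition (1) by replacing the excursions of an $\HG{\infty}$-geodesic outside $\HG{n}$ by direct edges at the vertices where it enters and exits a cell --- is exactly the paper's. The gap is in the step you yourself flag as the hardest: the claim that the portion of a geodesic crossing a cell $\Phi(\Delta_{n,r})$ between two of its vertices $\Phi(a),\Phi(b)$ can be replaced, without increasing length, by the harmonic image of the Euclidean edge joining $a$ to $b$. The paper does not prove this; it invokes \cite[Proposition 2]{Lapidus14} (which rests on Kigami's analysis of the measurable Riemannian structure of $\HG{\infty}$), asserting precisely that the harmonic image of a Euclidean edge is a length-minimizing curve in $\HG{\infty}$ between its endpoints. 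Your proposed substitute for that citation does not work as described. The harmonic contractions $M_0,M_1,M_2$ are anisotropic affine maps: on the two-dimensional plane carrying $\HG{\infty}$ they have eigenvalues $\frac{3}{5}$ and $\frac{1}{5}$, so they are not similarities and do \emph{not} preserve comparisons of lengths --- from $\mathrm{length}(\gamma_1)\leq\mathrm{length}(\gamma_2)$ in $\HG{\infty}$ one cannot conclude $\mathrm{length}(M_{w_1}\cdots M_{w_n}\gamma_1)\leq\mathrm{length}(M_{w_1}\cdots M_{w_n}\gamma_2)$, and the distortion of the word maps grows without bound with $n$. Hence ``reducing the comparison to a base-case inequality in $\HG{\infty}$ itself'' via the IFS is not a valid reduction.

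The base case itself is also not settled by the counting you propose. A competitor path between two level-$0$ vertices is not confined to level-$1$ edges (it may descend into arbitrarily deep cells), and even for paths along level-$1$ edges, ``three edges versus two'' proves nothing unless one knows the actual harmonic lengths of those edges --- which are all different, again because the $M_i$ are not similarities. Establishing that $\Phi$ carries Euclidean geodesics of $\SG{\infty}$ to geodesics of $\HG{\infty}$ is genuine work and is exactly the content of the Kigami and Lapidus--Sarhad results the paper leans on; the clean fix is to cite \cite[Proposition 2]{Lapidus14} at the replacement step, as the paper does, rather than re-derive it. The rest of your argument (Condition (2), the trivial inequality $d_\infty\leq d_n$ on $\Phi(V_n)\times\Phi(V_n)$, and the decomposition of a geodesic at its successive vertex passages) matches the paper and is fine.
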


\begin{proof}
  Let $\varepsilon > 0$. There exists $N\in\N$ such that for all $n\geq N$, the length of $\Psi\circ R_n$ is strictly less than $\varepsilon$. Let $n \geq \max\{ N, \frac{3}{2}\left(3^{n+1}-1\right) \}$.
  
  By construction, $\SG{n} = \bigcup_{j=1}^{3^n} \Delta_{n,j}$ and thus, $\HG{n} = \bigcup_{j=1}^{3^n} \Phi(\Delta_{n,j})$. By definition, if $x \in \HG{n}$, then $x\in \Phi\left(\Delta_{n,j}\right)$ for some $j \in \{1,\ldots,3^n\}$. Hence, for some $j\geq N$, the point $x$ lies on $\mathrm{ran}\left(\Phi \circ R_j \right)$, whose length is strictly less than $\varepsilon > 0$. It follows that $d_n(x,v) < \varepsilon$, for any endpoint $v$ of $R_j$, i.e., for any element of $V_n$. This proves that $\Haus{d_n}(V_n,\HG{n}) < \varepsilon$.

  \bigskip  

  Now, let $x,y \in V_n$. If $x,y \in \Phi(\Delta_{n,j})$, for some $j\in\{1,\ldots,3^n\}$, then, letting $E$ be the edge of $\Delta_{n,j}$ from $\Phi^{-1}(x)$ to $\Phi^{-1}(y)$ (parametrized as an affine map), we have by \cite[Proposition 2]{Lapidus14} that $\Phi\circ E$ is a geodesic from $x$ to $y$ in $\SG{\infty}$. It is then, of course, a geodesic in $(\HG{n},d_n)$ as well; so that $d_n(x,y) = \mathrm{length}(\Phi\circ E) = d_\infty(x,y)$.
  
  Otherwise, let $\gamma$ be some injective rectifiable curve from $v$ to $w$. Let $\alpha = \Phi^{-1}\circ\gamma$, which is an injective rectifiable curve in $\SG{\infty}$ between two vertices in $\SG{n}$. As discussed in Lemma \ref{vertices-lemma} and owing to our work on the {\SiepG}, there exists a subdivision $0 = t_0 < t_1 < \ldots < t_k < t_{k+1} = 1$ of $[0,1]$ such that
  \begin{equation*}
    \forall t \in [0,1], \quad \alpha(t) \text{ is a vertex in $\SG{n}$ }  \iff \exists j \in \{ 0,\ldots,k+1 \}, \quad t = t_j \text{,}
  \end{equation*}
  since there are finitely many vertices in $\SG{n}$, and since $\alpha$ is injective, it does not cross the same vertex twice.
  We now define a new curve in $\SG{\infty}$. For each $j \in \{0, \ldots, k\}$, let $E_j$ be the affine parametrization of the edge from $\alpha(t_j)$ to $\alpha(t_{j+1})$ in the triangle of $L_n$ containing both of these vertices. We then set
  \begin{equation*}
    \beta : t \in [0,1] \longmapsto E_j\left( \frac{t-t_j}{t_{j+1}-t_j} \right), \text{ if $t\in[t_j,t_{j+1}]$.}
  \end{equation*}
  Let $\gamma' = \Phi\circ\beta$. The curve $\gamma'$ is rectifiable, from $x$ to $y$, contained in $\HG{n}$ by construction, and the length of $\beta$ between $\beta(t_j) = \gamma(t_j)$ and $\beta(t_{j+1}) = \gamma(t_{j+1})$ is the smallest among all curves between these two vertices, again by \cite[Proposition 2]{Lapidus14}; so that
  \begin{equation*}
    \mathrm{length}(\gamma') \leq \mathrm{length}(\gamma) \text{.}
  \end{equation*}

  Applying the above construction to a geodesic $\gamma$ from $x$ to $y$ in $\HG{\infty}$, we obtain a geodesic of $\HG{n}$ between $x$ and $y$, with the same length. Thus $d_n(x,y) = d_\infty(x,y)$, as desired. This completes the proof of the theorem.
\end{proof}

\bigskip

The pattern observed in Theorems \ref{SiepG-approx-seq-thm} and \ref{HarmG-approx-seq-thm} is abstracted in the following hypothesis, which will be assumed in the remainder of our work.

\begin{hypothesis}\label{main-hyp}
  Let $\FC{\infty}$ be a piecewise $C^1$-fractal curve with parametrization $(C_j)_{j\in\N}$; so that, in particular,
  \begin{equation*}
    \FC{\infty} = \text{closure of }\bigcup_{j\in\N} \range{C_j} \text{.} 
  \end{equation*}
  We denote its geodesic distance by $d_\infty$. We also denote the set of all the enpoints of the curves $C_j$ ($j\in\N$) --- which we call the vertices of $\FC{\infty}$ --- by $V_\infty$.

  Let $(B_n)_{n\in\N}$ be an approximation sequence for $\FC{\infty}$ adapted to the parametrization $(C_j)_{j\in\N}$, and set $B_\infty = \infty$. For each $n\in\N$, we write
  \begin{equation*}
    \FC{n} = \bigcup_{j = 0}^{B_n} \range{C_j} \text{.}
  \end{equation*}
  We also denote the geodesic distance on $\FC{n}$ by $d_n$. Last, we let
  \begin{equation*}
    V_n = \left\{ C_j(0), C_j(1) : j \in \{0,\ldots,B_n\} \right\}\text,
  \end{equation*}
  which we call the \emph{set of vertices} of $\FC{n}$, and we set $V_\infty = \bigcup_{j \in \N} V_j$ --- whose elements we refer to as the \emph{vertices} of $\FC{\infty}$.

  For every $n\in\Nbar$, we denote the Lipschitz seminorm on $\FC{n}$ induced by the geodesic distance $d_n$ by $\Lip_n$.

  Finally, for every $j \in \N$, we also denote the length of $C_j$ by $\lambda_j$.
\end{hypothesis}

\subsection{Metric Convergence}

Hypothesis \ref{main-hyp} implies that the sequence $(\FC{n},d_n)_{n\in\N}$ converges to $(\FC{\infty},d_\infty)$ for the Gromov--Hausdorff distance. By Definition \ref{fractal-curve-def}, if we let $\varepsilon > 0$, there exists $N\in\N$ such that, if $n \geq N$, then the following properties hold:
\begin{enumerate}
\item $(V_n,d_n)$ and $(V_n,d_\infty)$ are the same metric spaces,
\item $\Haus{d_n}(\FC{n},V_n) < \frac{\varepsilon}{2}$.
\end{enumerate}

Moreover, by Definition \ref{approx-sequence-def}, the set $V_\infty$ is dense in $(\FC{\infty},d_\infty)$, and since $V_\infty$ is the increasing union of the sequence $(V_n)_{n\in\N}$, there exists $N' \in \N$, such that if $n\geq N'$, then $\Haus{d_\infty}(\FC{\infty},V_n) < \frac{\varepsilon}{2}$. Thus, if $n\geq \max\{N,N'\}$, then we have the following properties:
\begin{itemize}
\item $\Haus{d_\infty}(\FC{\infty},V_n) < \frac{\varepsilon}{2}$, since $V_{N'}\subseteq V_n$, by construction,
\item $\Haus{d_n}(\FC{n},V_n) \leq \frac{\varepsilon}{2}$.
\end{itemize}

Therefore,
\begin{align*}
  \GH((\FC{n},d_n),(\FC{\infty},d_\infty))
  &\leq \Haus{d_n}(\FC{n},V_n) + \GH((V_n,d_n),(V_n,d_\infty)) \\
  &\quad + \Haus{d_\infty}(V_n,\FC{\infty}) \\
  &\leq \frac{\varepsilon}{2} + 0 + \frac{\varepsilon}{2} = \varepsilon \text{;}
\end{align*}
so that
\begin{equation*}
  \lim_{n\rightarrow\infty} \GH((\FC{n},d_n),(\FC{\infty},d_\infty)) = 0\text.
\end{equation*}

\bigskip

In order to apply our techniques to the convergence of spectral triples, it is necessary to bring our previous observation about metric convergence within the functional formalism of the propinquity --- as the spectral propinquity is built on this formalism. We begin by recalling the basic construction of the Gromov--Hausdorff propinquity; we refer to \cite{Latremoliere13,Latremoliere13b,Latremoliere14} for the details.

\bigskip

In order to motivate our construction, we begin by recalling the construction of the Gromov--Hausdorff distance \cite{Gromov81,Gromov}. The Gromov--Hausdorff distance between two compact metric spaces $(X,d_X)$ and $(Y,d_Y)$ is the infimum, over all choices of a compact metric space $(Z,d_Z)$ which contain an isometric copy of $(X,d_X)$ and $(Y,d_Y)$, of the Hausdorff distance, $\Haus{d_Z}$ \cite{Hausdorff}, between $X$ and $Y$ in $Z$. The first step is to generalize the idea of isometrically embedding two compact metric spaces into a third one, and of course, this requires a notion of quantum isometry.

\begin{definition}[{\cite{Latremoliere13b}}]
  A \emph{tunnel} $(\D,\Lip,\pi_1,\pi_2)$ from $(\A_1,\Lip_1)$ to $(\A_2,\Lip_2)$ is a {\qcms} $(\D,\Lip)$ and, for each $j\in \{1,2\}$, a *-epimorphism $\pi_j : \D\twoheadrightarrow \A_j$ such that
  \begin{equation*}
    \forall a \in \dom{\Lip_j}, \quad \Lip_j(a) = \inf\left\{ \Lip(b) : b\in\sa{\D}, \pi_j(b) = a  \right\} \text{.}
  \end{equation*}
\end{definition}

We then associate a nonnegative number to every tunnel, which is different from the quantity used in the construction of the Gromov--Hausdorff distance (though still using the Hausdorff distance), in order to accommodate the more general framework of {\qcms s}.

\begin{definition}[{\cite{Latremoliere14}}]\label{extent-def}
  The \emph{extent} $\tunnelextent{\tau}$ of a tunnel $\tau = (\D,\Lip,\pi_1,\pi_2)$ from $(\A_1,\Lip_1)$ to $(\A_2,\Lip_2)$ is the number given by
  \begin{equation*}
    \tunnelextent{\tau} = \max_{j\in\{1,2\}} \Haus{\Kantorovich{\Lip}}\left(\StateSpace(\D), \left\{ \varphi\circ\pi_j : \varphi \in \StateSpace(\A_j) \right\} \right) \text{.}
  \end{equation*}
\end{definition}

The propinquity is thus defined as follows.

\begin{definition}[{\cite{Latremoliere13b,Latremoliere14}}]\label{propinquity-def}
  The \emph{(dual) propinquity} between two {\qcms s} $(\A,\Lip_\A)$ and $(\B,\Lip_\B)$ is given by
  \begin{equation*}
    \dpropinquity{}((\A,\Lip_\A),(\B,\Lip_\B)) = \\
    \inf\left\{ \tunnelextent{\tau} : \text{$\tau$ is a tunnel from $(\A,\Lip_\A)$ to $(\B,\Lip_\B)$} \right\} \text{.}
  \end{equation*}
\end{definition}
We refer to \cite{Latremoliere15b} for the discussion of several important variations of the construction of the propinquity; Definition \ref{propinquity-def} refers to the so-called dual propinquity, which we will simply refer to as the propinquity in this paper.

We will use the following definition in this paper. For a class $C$ and an equivalence relation $\sim$ on $C$, a function $d$ on $C\times C$ is called a \emph{metric up to $\sim$} (or a \emph{pseudo-metric}, in short) if the following three properties hold:
\begin{enumerate}
\item $\forall x,y \in C, \quad d(x,y) = 0 \text{ if and only if }x\sim y$,
\item $\forall x,y \in C, \quad d(x,y) = d(y,x)$,
\item $\forall x,y,z \in C, \quad d(x,z) \leq d(x,y) + d(y,z)$.
  \end{enumerate}
  
\begin{theorem}[{\cite{Latremoliere13b}}]
  The propinquity is a complete metric, up to full quantum isometry, on the class of {\qcms s}. Moreover, it induces the same topology as the Gromov--Hausdorff distance on the class of classical compact metric spaces.
\end{theorem}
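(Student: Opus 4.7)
My plan is to follow the strategy of \cite{Latremoliere13b}, decomposing the theorem into the pseudo-metric axioms for $\dpropinquity{}$, completeness, and topological comparison with $\GH$ on the classical subclass. Symmetry of $\dpropinquity{}$ is immediate, since swapping $\pi_1$ and $\pi_2$ in a tunnel $(\D,\Lip,\pi_1,\pi_2)$ leaves the extent unchanged. For the triangle inequality, given tunnels $\tau = (\D,\Lip,\pi_1,\pi_2)$ from $(\A_1,\Lip_1)$ to $(\A_2,\Lip_2)$ and $\tau' = (\D',\Lip',\pi'_2,\pi'_3)$ from $(\A_2,\Lip_2)$ to $(\A_3,\Lip_3)$, I would construct a composed tunnel from $(\A_1,\Lip_1)$ to $(\A_3,\Lip_3)$ built on the fibered product
\begin{equation*}
  \F = \left\{ (d,d') \in \D\oplus\D' : \pi_2(d) = \pi'_2(d') \right\}\text,
\end{equation*}
equipped with a seminorm of the form $\max\{\Lip(d),\Lip'(d')\}$ and coordinate *-epimorphisms to $\A_1$ and $\A_3$; the delicate step is verifying that the extent adds correctly, which amounts to chaining the Kantorovich estimates through $\StateSpace(\D)$ and $\StateSpace(\D')$. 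The main obstacle I anticipate is the separation property: vanishing $\dpropinquity{}$ must produce a full quantum isometry between $(\A,\Lip_\A)$ and $(\B,\Lip_\B)$. From a sequence of tunnels with extents tending to zero I would extract a candidate *-isomorphism by an Arzel\`a--Ascoli-type compactness argument, leveraging Rieffel's nontrivial observation that condition (2) of Definition \ref{qcms-def} forces $\{a\in\dom{\Lip_\A} : \Lip_\A(a)\leq 1,\, \norm{a}{\A}\leq R\}$ to be norm-totally-bounded in $\A$ for every $R>0$; lifting elements through the tunnel epimorphisms and transferring them to $\B$, a diagonal subsequence yields the required full quantum isometry.

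Completeness follows from the standard diagonal strategy: for a Cauchy sequence $((\A_n,\Lip_n))_{n\in\N}$, I would select tunnels between consecutive terms with geometrically summable extents and assemble the limit {\qcms} from Cauchy sequences of self-adjoint elements tracked through the tunnel epimorphisms. Verifying that the resulting object satisfies all axioms of Definition \ref{qcms-def} --- crucially, that its Monge--Kantorovich metric topologizes the weak* topology on its state space --- is again bookkeeping in extent estimates, with the geometric decay feeding into Hausdorff-distance bounds at each finite stage.

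Finally, the topological equivalence with $\GH$ on classical compact metric spaces splits into two quantitative comparisons. If $(X,d_X)$ and $(Y,d_Y)$ embed isometrically in $(Z,d_Z)$ with $\Haus{d_Z}(X,Y)<\varepsilon$, then the tunnel $(C(Z),\Lip_{d_Z},r_X,r_Y)$, where $r_X,r_Y$ are the restriction *-epimorphisms to $C(X)$ and $C(Y)$, has extent controlled by $\varepsilon$: the quotient seminorms recover $\Lip_{d_X}$ and $\Lip_{d_Y}$ by McShane's extension theorem, while the state-space Hausdorff bound follows from the Kantorovich duality relating $\Haus{d_Z}$ and $\Kantorovich{\Lip_{d_Z}}$. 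Taking the infimum over ambient $Z$ yields $\dpropinquity{}\leq \GH$. Conversely, given a tunnel of small extent between $(C(X),\Lip_{d_X})$ and $(C(Y),\Lip_{d_Y})$, the state space of the pivot {\qcms} equipped with its Monge--Kantorovich metric provides a compact metric space into which $X$ and $Y$ embed as Dirac states with Hausdorff distance controlled by the extent, giving the matching reverse inequality and hence the claimed topological equivalence.
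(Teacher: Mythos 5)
First, a point of reference: the paper offers no proof of this statement at all --- it is imported verbatim from \cite{Latremoliere13b} (with the triangle inequality for the single-tunnel formulation of Definition \ref{propinquity-def} supplied by \cite{Latremoliere14}), so there is no in-paper argument to compare yours against. Judged on its own terms, your outline identifies the right list of tasks and handles most of them plausibly: Rieffel's total boundedness of the sets $\{a\in\dom{\Lip_\A} : \Lip_\A(a)\leq 1,\ \norm{a}{\A}\leq R\}$ is indeed the engine behind the coincidence property, and your two classical comparisons --- the tunnel $(C(X\cup Y),\Lip_{d_Z},r_X,r_Y)$ in one direction, and Dirac states inside $(\StateSpace(\D),\Kantorovich{\Lip_\D})$ for the pivot algebra $\D$ in the other --- are exactly how the topological agreement with $\GH$ on classical spaces is established.

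The genuine gap is the triangle inequality. Composing tunnels over the fibered product $\F=\{(d,d'):\pi_2(d)=\pi_2'(d')\}$ with the seminorm $\max\{\Lip(d),\Lip'(d')\}$ is precisely the construction that does not go through: one cannot readily verify that the domain of this seminorm is dense in $\sa{\F}$ (perturbing $d$ into $\dom{\Lip}$ destroys the exact compatibility $\pi_2(d)=\pi_2'(d')$), nor that the extent of the resulting candidate tunnel is bounded by the sum of the two extents, since the Monge--Kantorovich geometry of $\StateSpace(\F)$ is not controlled by those of $\StateSpace(\D)$ and $\StateSpace(\D')$. This is not a minor technicality: it is the reason \cite{Latremoliere13b} originally defined the dual propinquity as an infimum over \emph{journeys} (finite chains of tunnels), so that the triangle inequality holds by fiat, and why a separate paper \cite{Latremoliere14} was required to prove it for the extent-of-a-single-tunnel formulation quoted here. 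The working fix is to relax the exact matching to an $\varepsilon$-penalty on all of $\D\oplus\D'$, i.e.\ to use $\max\left\{\Lip(d),\Lip'(d'),\frac{1}{\varepsilon}\norm{\pi_2(d)-\pi_2'(d')}{\A_2}\right\}$ and let $\varepsilon\to 0$ at the end --- exactly the pattern of the seminorm $\mathsf{M}_{n,\alpha}$ of Equation (\ref{M-norm-eq}) in this paper's alternate proof of Theorem \ref{prop-conv-thm}. Your completeness paragraph also leans heavily on ``bookkeeping'': constructing the limit object and verifying that its Monge--Kantorovich metric metrizes the weak* topology is the longest argument in \cite{Latremoliere13b}, though as a plan your sketch points in the right direction.
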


\bigskip

It then follows immediately from \cite{Latremoliere13b} that the following result holds.
\begin{theorem}\label{prop-conv-thm}
 If Hypothesis \ref{main-hyp} holds, then
  \begin{equation*}
    \lim_{n\rightarrow\infty} \dpropinquity{}\left( (C(\FC{\infty}),\Lip_{\infty}), (C(\FC{n}),\Lip_n) \right) = 0 \text{.}
  \end{equation*}
\end{theorem}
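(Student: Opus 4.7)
The plan is to appeal directly to the preceding theorem, which asserts that on the subclass of classical compact metric spaces, the propinquity topology coincides with the Gromov--Hausdorff topology. The discussion immediately above the statement of Theorem \ref{prop-conv-thm} already establishes that $\GH((\FC{n},d_n),(\FC{\infty},d_\infty)) \to 0$, using only the two properties of an approximation sequence guaranteed by Hypothesis \ref{main-hyp}: that $(V_n,d_n)$ and $(V_n,d_\infty)$ are the same metric space, and that $V_n$ is eventually $\varepsilon$-dense in both $\FC{n}$ and $\FC{\infty}$. Hence the result follows by a direct citation.

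However, since the subsequent sections work within the formalism of the propinquity rather than that of the Gromov--Hausdorff distance, it is instructive to give an explicit functional-analytic proof by exhibiting tunnels of vanishing extent. The natural candidate is built from the glued compact metric space $(Z_n, d_{Z_n})$, where $Z_n := \FC{n} \sqcup \FC{\infty}$ and $d_{Z_n}$ restricts to $d_n$ and $d_\infty$ on the respective components, while, for $x\in\FC{n}$ and $y\in\FC{\infty}$, it is defined by
\begin{equation*}
d_{Z_n}(x,y) \;=\; \inf_{v\in V_n}\bigl(d_n(x,v)+d_\infty(v,y)\bigr).
\end{equation*}
That this is genuinely a metric (rather than merely a pseudo-metric) uses precisely the fact that $d_n$ and $d_\infty$ coincide on the common vertex set $V_n$; from this identification, the triangle inequality follows by a routine case analysis on how many of the involved points lie in $\FC{n}$ versus $\FC{\infty}$.

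The sought-after tunnel is then $\tau_n := (C(Z_n), \Lip_{d_{Z_n}}, r_n, r_\infty)$, where $\Lip_{d_{Z_n}}$ is the Lipschitz seminorm on $C(Z_n)$ induced by $d_{Z_n}$ and $r_n$, $r_\infty$ are the restriction *-epimorphisms onto $C(\FC{n})$ and $C(\FC{\infty})$. The tunnel axiom, that each target Lipschitz seminorm is a quotient of $\Lip_{d_{Z_n}}$, follows from McShane's extension theorem, since both $\FC{n}$ and $\FC{\infty}$ embed isometrically into $(Z_n, d_{Z_n})$. A standard estimate, which I would treat as known from the references already cited in the paper, then shows that $\tunnelextent{\tau_n} \leq \Haus{d_{Z_n}}(\FC{n}, \FC{\infty})$, which by the triangle inequality in Hausdorff distance (using $V_n$ as an intermediate set, on which the two metrics agree) is bounded by $\Haus{d_n}(\FC{n}, V_n) + \Haus{d_\infty}(V_n, \FC{\infty})$.

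Both of these last two quantities vanish in the limit by Hypothesis \ref{main-hyp}, so $\tunnelextent{\tau_n} \to 0$, and hence $\dpropinquity{}((C(\FC{\infty}), \Lip_\infty), (C(\FC{n}), \Lip_n)) \to 0$. The main (and essentially only) technical point is the verification of the triangle inequality for $d_{Z_n}$; it rests entirely on the metric identification of $V_n$ inside both $(\FC{n}, d_n)$ and $(\FC{\infty}, d_\infty)$, which is precisely Condition (1) of Definition \ref{approx-sequence-def}.
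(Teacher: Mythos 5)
Your proposal is correct, and its second half takes a genuinely different route from the paper's. The paper also gives two arguments: a one-line citation identical in spirit to your first paragraph, and an ``alternate proof'' that builds an explicit tunnel --- but that tunnel is $\tau_{n,\alpha}=(C(\FC{\infty})\oplus C(\FC{n}),\mathsf{M}_{n,\alpha},\rho_\infty,\rho_n)$ with $\mathsf{M}_{n,\alpha}(f,g)=\max\left\{\Lip_\infty(f),\Lip_n(g),\frac{1}{\alpha}\norm{f|n-g|n}{C(V_n)}\right\}$, i.e.\ the full direct sum with a penalty of size $\frac{1}{\alpha}$ for disagreement on $V_n$, and its extent is estimated by hand via Krein--Milman and Dirac masses. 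Your pivot $C(Z_n)$ is, after the necessary quotient, the subalgebra of pairs agreeing \emph{exactly} on $V_n$, realized as the continuous functions on a glued classical metric space; this is cleaner for the present theorem, since the extent bound then follows from the standard comparison of the Monge--Kantorovich--Hausdorff distance between state spaces with the Hausdorff distance between the underlying point sets, and you correctly isolate Condition (1) of Definition \ref{approx-sequence-def} as what makes the glued triangle inequality (hence the isometric embeddings and McShane step) work. What the paper's less economical construction buys is reuse: the same $\A_n$ and $\mathsf{M}_{n,\alpha}$ reappear verbatim in Lemma \ref{modular-conv-lemma} and Theorem \ref{metrical-conv-thm}, where the direct-sum algebra must act componentwise on $\Hilbert_\infty\oplus\Hilbert_n$ and the explicit $\frac{1}{\alpha}\norm{f|n-g|n}{C(V_n)}$ term is exactly what is needed to verify the modular Leibniz inequality for the metrical tunnel; your $C(Z_n)$ could be made to serve there too, but the later sections as written depend on the paper's version. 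One small repair to your write-up: with the formula as stated on the literal disjoint union $\FC{n}\sqcup\FC{\infty}$, the two copies of each $v\in V_n$ are at distance zero, so $d_{Z_n}$ is only a pseudo-metric; you should pass to the quotient identifying the two copies of $V_n$ (and note that, again by Condition (1), both $(\FC{n},d_n)$ and $(\FC{\infty},d_\infty)$ still embed isometrically into the quotient). This does not affect the validity of the argument.
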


\begin{proof}
  By \cite{Latremoliere13b}, we have:
  \begin{align*}
    0 &\leq \dpropinquity{}\left( (C(\FC{\infty}),\Lip_{\infty}), (C(\FC{n}),\Lip_n) \right) \\
    &\leq \GH((\FC{n},d_n),(\FC{\infty},d_\infty)) \xrightarrow{n\rightarrow\infty} 0 \text{;}
  \end{align*}
  hence, our conclusion. We note for the record that, in fact, the propinquity restricted to classical compact metric spaces and the Gromov--Hausdorff distance are topologically equivalent.
\end{proof}

\bigskip

Of course, the purpose of introducing the Gromov--Hausdorff propinquity is to give a functional analytic translation of the convergence of $\left(\FC{n}\right)_{n\in\N}$ to $\FC{\infty}$. Next, we actually provide a proof which depends on the specific assumptions about approximation sequences of piecewise $C^1$-fractal curves, rather than just on the more general argument from \cite{Latremoliere13}, since the construction below will be more helpful for our purpose.

\begin{proof}[{Alternate proof of Theorem \ref{prop-conv-thm}}]
  If $f \in C(\FC{m})$, for some $m\in \Nbar$, and if $n \leq m$, then $f|n$ is the restriction of $f$ to $V_n$.
  
  Let $\varepsilon > 0$. By Definition \ref{approx-sequence-def}, there exists $N_0\in\N$ such that if $n\geq N_0$, then:
  \begin{equation*}
    \Haus{d_n}(\FC{n},V_n) < \varepsilon  \text{.}
  \end{equation*}
  By Definition \ref{fractal-curve-def}, there exists $N_1 \in \N$, such that if $n\geq N_1$, then
   \begin{equation*}
    \Haus{d_\infty}(\FC{\infty},V_n) < \varepsilon  \text{.}
  \end{equation*}

  Let $N = \max\{ N_0, N_1 \}$. Fix $n\geq N$ and set $\A_n = C(\FC{\infty}) \oplus C(\FC{n})$.
  
  We now let $\alpha > 0$ as well. For any $(f,g) \in \A_n$, we set
  \begin{equation}\label{M-norm-eq}
    \mathsf{M}_{n,\alpha}(f,g) = \max\left\{ \Lip_{\infty}(f), \Lip_n(g), \frac{1}{\alpha}\norm{f|n - g|n}{C(V_n)} \right\} \text{.}
  \end{equation}

  It is an easy exercise to check that $(\A_n,\mathsf{M}_{n,\alpha})$ is a {\qcms}. We now check that
  \begin{equation}
    \tau_{n,\alpha} = (\A_n,\mathsf{M}_{n,\alpha},\rho_\infty, \rho_n)
  \end{equation}
  is a tunnel, where $\rho_\infty : (f,g) \in \A_n \mapsto f \in C(\FC{\infty})$ and $\rho_n : (f,g) \in \A_n \mapsto g \in C(\FC{n})$. Of course, $\rho_\infty$ and $\rho_n$ are *-epimorphisms.

  If $f \in C(\FC{\infty},\R)$ and $\Lip_{\infty}(f) = 1$, then $f|n$ is also $1$-Lipschitz since $(V_n,d_\infty)$ and $(V_n,d_n)$ are equal, by Definition \ref{approx-sequence-def}. By McShane's extension theorem for real-valued Lipschitz functions \cite{McShane34}, and by Theorem \ref{Lip-is-Lipschitz-thm}, there exists $g \in C(\FC{n},\R)$ such that $g|n = f|n$, and $\Lip_n(g) = 1$. Thus, $\mathsf{M}_{n,\alpha}(f,g) = 1$. In other words, $\Lip_{\infty}$ is the quotient of $\mathsf{M}_{n,\alpha}$ on $C(\FC{\infty})$ via $\rho_\infty$. We caution the reader that, in general, $f$ restricted to $\FC{n}$ is \emph{not} $1$-Lipschitz for $d_n$.

  The same reasoning applies to show that the quotient of $\mathsf{M}_{n,\alpha}$ on $C(\FC{n})$ via $\rho_n$ is equal to $\Lip_n$. Therefore, $\tau_{n,\alpha}$ is a tunnel from $(C(\FC{\infty}),\Lip_\infty)$ to $(C(\FC{n}),\Lip_n)$. Next, we compute its extent.

  \bigskip
  
  First, let $\varphi \in \StateSpace(C(\FC{\infty}))$. For each $x\in \FC{\infty}$, let $\delta_x$ be the Dirac point mass at $x$. By the Krein--Milmann theorem, there exists a finite subset $F\subseteq \FC{\infty}$ such that, if $\theta=\sum_{x\in F} t_x\delta_x$, for some $(t_x)_{x\in F}\in [0,1]^F$ with $\sum_{x\in F}t_x=1$, then
  \begin{equation*}
    \Kantorovich{\Lip_\infty}\left( \varphi, \theta \right) < \alpha \text.
  \end{equation*}
  
  By assumption, for each $x\in F$, there exists $v_x \in V_n$ such that $d_\infty(x,v_x) < \varepsilon$. Let $\psi = \sum_{x\in F} t_x \delta_{v_x}$. By construction, $\psi$ can be trivially identified with a state of $C(V_n)$ and also with a state of $C(\FC{n})$.

  Now, let $(f,g) \in \sa{\A_n}$ be such that $\mathsf{M}_{n,\alpha}(f,g) \leq 1$. Thus, $\Lip_{\infty}(f) \leq 1$, $\Lip_n(g) \leq 1$ and, for all $v \in V_n$, we have $|f(v)-g(v)| < \alpha$. Therefore, we have successively:
  \begin{align*}
    \left|\varphi(f) - \psi(g)\right|
    &= \left|\varphi(f) - \theta(f) \right| + \left|\theta(f) - \psi(f)\right| + \left|\psi(f)-\psi(g)\right| \\
    &\leq \alpha + \sum_{x\in F} t_x |f(x)-f(v_x)| + \sum_{x \in F} t_x |f(v_x) - g(v_x)| \leq 2\alpha + \varepsilon \text{.}
  \end{align*}

  The same reasoning applies with the roles of $\FC{\infty}$ and $\FC{n}$ interchanged. Hence, it follows from Definition \ref{extent-def} that
  \begin{equation*}
    \tunnelextent{\tau_{n,\alpha}}\leq 2\alpha + \varepsilon \text{.}
  \end{equation*}
  Consequently, in light of Definition \ref{propinquity-def}, and since $\alpha>0$ is arbitrary, we have that
  \begin{equation*}
    \dpropinquity{}((C(\FC{\infty}),\Lip_{\infty}), (C(\FC{n}),\Lip_n)) \leq \tunnelextent{\tau_{n,\alpha}} \leq \varepsilon \text.
  \end{equation*}
 This completes our alternative proof of Theorem \ref{prop-conv-thm}.
\end{proof}

The tunnel we constructed in the proof of Theorem \ref{prop-conv-thm} is the main ingredient for obtaining an appropriate estimate on the spectral propinquity between the spectral triples constructed in \cite{Lapidus14} on piecewise $C^1$-fractal curves.

\section{Convergence of Spectral Triples}

\subsection{Spectral Triples for Piecewise $C^1$-Fractal Curves}

Christensen, Ivan and Lapidus defined in \cite{Lapidus08} a metric spectral triple on $C(\SG{\infty})$. Then, Lapidus and Sarhad extended, in \cite{Lapidus14}, the construction from \cite{Lapidus08} to arbitrary piecewise $C^1$-fractal curves.

It is the purpose of this paper to show that the metric spectral triple constructed in \cite{Lapidus08} is a limit of natural metric spectral triples on $\SG{n}$, and more generally, to show that given a compatible approximation sequence for some parametrization of a piecewise $C^1$-fractal curve, the metric spectral triples from \cite{Lapidus14} are limits of spectral triples of finite unions of $C^1$-curves from the chosen parametrization. We now turn to the construction of these spectral triples.

\begin{remark}\label{m-rmk}
  We note that the construction of the spectral triples on the {\SiepG} \cite{Lapidus08}, and, more generally, on piecewise $C^1$-fractal curves \cite{Lapidus14}, provides a noncommutative version (and extension) of the notion of a fractal string, introduced and studied by the second author and his collaborators in, for example, \cite{eighteenhalf} and \cite{thirtyfourhalf,seventeenfourfifth,seventeenhalf}. It would be interesting, in a later work, to establish explicit connections between the present work and the theory of complex dimensions of fractal strings and higher--dimensional fractals developed in those references; see, e.g., \cite{eighteenhalf,seventeenhalf}.
\end{remark}

\bigskip

The construction begins with the construction of a spectral triple on an arbitrary $C^1$-rectifiable curve, much as in \cite{Lapidus08}. First, let us recall the construction of the standard spectral triple on the circle.

Let $\mathrm{CP}$ be the unital Abelian C*-algebra of all $\C$-valued continuous functions $f$ over $[-1,1]$ such that $f(-1) = f(1)$:
\begin{equation*}
  \mathrm{CP} = \left\{ f \in C([-1,1]) : f(-1) = f(1) \right\} \text{.}
\end{equation*}
The Gelfand spectrum of $\mathrm{CP}$ is, of course, homeomorphic to the unit circle in $\C$; we will identify it with the image $\T$ of $[-1,1]$ under the map
\begin{equation*}
  x \in [-1,1] \longmapsto \exp(i\pi x) \text{.}
\end{equation*}

We now define a spectral triple on $\mathrm{CP}$, using the Gelfand--Naimark--Segal representation of $\mathrm{CP}$ for the Haar state. Explicitly, let $\mathscr{J}$ be the Hilbert space closure of $\mathrm{CP}$ for the inner product
\begin{equation*}
  (f,g) \in \mathrm{CP} \mapsto \int_{-1}^1 fg\text{.}
\end{equation*}
As usual, we identify $f \in \mathrm{CP}$ with the (bounded) multiplication operator by $f$ on $\mathscr{J}$.

For each $k \in \Z$, let
\begin{equation*}
  e_k : t \in [-1,1] \mapsto \exp(i \pi k t)\text{.}
\end{equation*}
Clearly, $e_k \in \mathscr{J}$. We define $\slashed{\partial}$ as the closure of the linear extension of the map defined as follows:
\begin{equation*}
  \forall k \in \Z, \quad \slashed{\partial} e_k = \pi k e_k \text{.} 
\end{equation*}
The operator $\slashed{\partial}$ is self-adjoint with spectrum $\left\{ \pi k : k \in \Z \right\}$. In particular, $\slashed{\partial}$ has a compact resolvent.

A quick computation now shows that $f \in \mathscr{J}$ is in the domain $\dom{\slashed{\partial}}$ of $\slashed{\partial}$ if and only if there exists a necessarily unique $g \in \mathscr{J}$ such that
\begin{equation*}
  \forall x \in [-1,1], \quad f(x) = f(0) + \int_0^x g(t) \, dt \text;
\end{equation*}
i.e., $f$ is absolutely continuous on $[-1,1]$, with almost everywhere derivative $g$. Furthermore, in this case, $\slashed{\partial} f = ig$. From this, it follows that for all $k\in\Z$, we have $[\slashed{\partial},f]e_k = (\slashed{\partial} f) e_k$. We thus deduce that, if we let
\begin{equation*}
  \Lip_\T : f \in \mathrm{CP} \mapsto \opnorm{[\slashed{\partial},f]}{}{\mathscr{J}} \quad \text{ (allowing for the value $\infty$),}
\end{equation*}
then
\begin{equation*}
  \forall f \in \mathrm{CP}, \quad \Lip_\T(f) = \norm{\slashed{\partial} f}{L^\infty([-1,1])} \quad \text{ (also allowing for the value $\infty$).}
\end{equation*}

From this, we conclude that $f \in \dom{\Lip_{\T}}$ if and only if $\slashed{\partial}(f)$ is essentially bounded on $[-1,1]$. Equivalently, via the Lebesgue differentiation theorem, $f \in \dom{\Lip_\T}$ if and only if $f$ is Lipschitz for the \emph{usual metric} on $[-1,1]$ --- with the obvious identification of $\mathscr{J}$ as a closed subspace of $L^2([-1,1])$. In turn, this implies that
\begin{equation*}
  \left(\mathrm{CP},\mathscr{J},\slashed{\partial}\right)\text{ is a metric spectral triple}
\end{equation*}
since $\{ f \in \mathrm{CP} : f(0) = 0, \Lip_{\T}(f) \leq 1 \}$ is compact in $C([-1,1])$ by Arz{\'e}la--Ascoli theorem. However, we want to understand the metric induced by $\Lip_\T$ on the Gelfand spectrum $\T$ of the C*-algebra $\mathrm{CP}$. Let $x,y \in [0,1)$. It is easy to see that
\begin{multline*}
  \Kantorovich{\Lip_\T}\left( \exp(2i\pi x),\exp(2i\pi y) \right) = \\ \sup\left\{ |f(x)-f(y)| : f\in \mathrm{CP}, f(1) = 0, \norm{\slashed{\partial}(f)}{L^\infty([0,1])} \leq 1 \right\}\text{.}
\end{multline*}
If $f \in \mathrm{CP}$ with $\Lip_\T(f)\leq 1$ and $f(1) = 0$, and thus $f(-1) = 0$, then
\begin{equation*}
  |f(x)-f(y)| \leq \min\{ |x-y|, 2-|x-y| \}
\end{equation*}
and therefore,
\begin{equation}\label{Kantorovich-computation-eq}
  \forall x,y \in [-1,1], \quad \Kantorovich{\Lip_\T}(\exp(2i\pi x),\exp(2i\pi y))  = |x-y| \quad \left(\mathrm{mod} \; 1 \right) \text{,}
\end{equation}
where equality in Equation (\ref{Kantorovich-computation-eq}) is achieved by using continuous piecewise affine functions.

Thus, the metric induced by $\Kantorovich{\Lip_\T}$ makes the Gelfand spectrum of $\mathrm{CP}$ isometric to the unit circle in $\T$ endowed with its geodesic distance; i.e., the distance between two distinct points is the smallest of the lengths of the two arcs between these points.

\bigskip

We now use the spectral triple $(\mathrm{CP},\mathscr{J},\slashed{\partial})$ in order to construct a spectral triple on the unit interval $[0,1]$. As the construction of spectral triples on piecewise $C^1$-fractal curves involves possibly countable direct sums of interval spectral triples, we will in particular avoid having the eigenvalue 0 in the spectrum of our interval Dirac operator, so that a countable direct sum of such operators will still have a compact resolvent.

If $f \in C([0,1])$, then the map $t \in [-1,1]\mapsto f(|t|)$ is in $CP$. Let $\varpi$ be the faithful *-representation of $C([0,1])$ on $\mathscr{J}$ defined by
\begin{equation*}
  \forall f \in C([0,1]), \quad \forall \xi \in \mathscr{J}, \quad \varpi(f)\xi : t \in [-1,1] \mapsto f(|t|)\xi(t) \text{.}
\end{equation*}
We also set $\Dirac = \slashed{\partial} + \frac{\pi}{2}$ and $\dom{\Dirac}=\dom{\slashed{\partial}}$, noting that $\Dirac$ is a self-adjoint operator with spectrum $\spectrum{\Dirac} = \left\{ \pi\left(k + \frac{1}{2}\right) : k \in \Z \right\}$. It is easy to check that $\left( C([0,1]), \mathscr{J}, \Dirac \right)$ is a metric spectral triple over $C([0,1])$ which induces the usual metric on $[0,1]$.

\bigskip

For each $n\in\Nbar$, we now construct our spectral triple over $\FC{n}$, where we use Hypothesis \ref{main-hyp}. We let
\begin{equation*}
  \Hilbert_n = \oplus_{j = 0}^{B_n} \mathscr{J}
\end{equation*}
and
\begin{equation*}
  \dom{D_n} = \left\{ (\xi_j)_{j=0}^{B_n} \in \Hilbert_n : \forall j\in\{0,\ldots,B_n\} \quad \xi_j \in \dom{\Dirac} \right\} \text{.}
\end{equation*}

For each $j \in \N$, we also let $q_j : C(\FC{\infty}) \twoheadrightarrow C([0,1])$, which sends $f \in C(\FC{\infty})$ to $f \circ C_j$ in $C[0,1]$. Of course, $q_j$ is a *-epimorphism.  We then set, for all $f \in C(\FC{n})$:
\begin{equation*}
  \forall \xi = (\xi_j)_{j \in\N, j \leq B_n} \in \Hilbert_n, \quad \pi_n(f)\xi = \left( \varpi(f\circ C_j)\xi_j \right)_{j\in\N, j \leq B_n}\text{.}
\end{equation*}
Finally, using the same notation as above, we set:
\begin{equation*}
   \forall \xi = (\xi_j)_{j \in\N, j \leq B_n} \in \dom{D_n}, \quad D_n \xi = \left( \frac{1}{\lambda_j} \Dirac\xi_j \right)_{j \in \N, j\leq B_n} \text{,}
 \end{equation*}
 where $\lambda_j$ is the length of $C_j$, for every $j\in\N$.

It is then easily checked that $\left( C(\FC{n}), \Hilbert_n, D_n \right)$ is a spectral triple on $C(\FC{n})$. We will next show that this spectral triple is metric and that $\Kantorovich{D_n}$ restricted to $\FC{n}$ coincides with the geodesic distance $d_n$. Our theorem includes \cite[Theorem 8.13]{Lapidus08} (case $n=\infty$) and extends it to all $n\in\Nbar$, which we need in order to be able to formulate and establish our approximation results.

\begin{theorem}\label{Lip-is-Lipschitz-thm}
  We assume Hypothesis \ref{main-hyp}. Let $n\in\Nbar$. If $f \in \FC{n}$, then
  \begin{equation*}
    f \in \dom{\Lip_n} \iff f\dom{D_n}\subseteq \dom{D_n}
  \end{equation*}
  and, for all $f\in\dom{\Lip_n}$, we have
  \begin{equation*}
    \Lip_n(f) = \opnorm{[D_n,\pi_n(f)]}{}{\Hilbert_n} \text{.}
  \end{equation*}
  In particular, the restriction of $\Kantorovich{D_n}$ to $\FC{n}$ is the geodesic distance $d_n$.
\end{theorem}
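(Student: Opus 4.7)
The plan is to exploit the direct sum structure of $(D_n,\Hilbert_n,\pi_n)$ to reduce everything to a per-curve analysis on each parametrizing curve $C_j$, and then to reassemble the resulting bounds into a statement about the geodesic distance $d_n$ using the concatenation property of Definition \ref{fractal-curve-def}.

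First, I would decompose the commutator. Since both $D_n$ and $\pi_n(f)$ are block-diagonal with respect to $\Hilbert_n = \bigoplus_{j=0}^{B_n}\mathscr{J}$, the condition $\pi_n(f)\dom{D_n}\subseteq \dom{D_n}$ is equivalent to requiring, for every $j\in\{0,\ldots,B_n\}$, that $\varpi(f\circ C_j)\dom{\Dirac}\subseteq \dom{\Dirac}$ together with uniform boundedness of the weighted family $\bigl(\tfrac{1}{\lambda_j}\opnorm{[\Dirac,\varpi(f\circ C_j)]}{}{\mathscr{J}}\bigr)_{j\leq B_n}$; in that case
\begin{equation*}
\opnorm{[D_n,\pi_n(f)]}{}{\Hilbert_n} = \sup_{0\leq j\leq B_n}\frac{1}{\lambda_j}\opnorm{[\Dirac,\varpi(f\circ C_j)]}{}{\mathscr{J}}.
\end{equation*}
From the interval analysis already carried out earlier in the section, $\opnorm{[\Dirac,\varpi(g)]}{}{\mathscr{J}}$ equals the Lipschitz constant of $g$ on $[0,1]$ for the usual metric, and the domain preservation condition amounts to $g$ being absolutely continuous with essentially bounded derivative. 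Taking the $C_j$ to be arc-length parametrized, so that the arc-length distance on $\range{C_j}$ between $C_j(s)$ and $C_j(t)$ is $\lambda_j|s-t|$, the scaling $\tfrac{1}{\lambda_j}$ converts the $[0,1]$-Lipschitz constant of $f\circ C_j$ into the Lipschitz constant $L_j(f)$ of $f|_{\range{C_j}}$ with respect to the intrinsic arc-length distance on $C_j$. Hence $\opnorm{[D_n,\pi_n(f)]}{}{\Hilbert_n} = \sup_{0\leq j\leq B_n} L_j(f)$.

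The central step is then to prove that $\Lip_n(f) = \sup_{0\leq j\leq B_n} L_j(f)$. The bound $L_j(f)\leq \Lip_n(f)$ is immediate, because each $C_j$ is a rectifiable path in $\FC{n}$, so arc-length on $C_j$ dominates $d_n$. For the reverse inequality, set $L = \sup_j L_j(f)$ (which we may assume is finite) and fix $p$ in the dense set $\mathcal{B}$ of Definition \ref{fractal-curve-def}, together with an arbitrary $q\in\FC{n}$. By that definition, there is a geodesic from $p$ to $q$ in $\FC{n}$ realized as a (possibly countable) concatenation of curves drawn from $(C_j)_{j\leq B_n}$, whose arc-lengths sum to $d_n(p,q)$. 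Telescoping $|f(p)-f(q)|$ along this concatenation and bounding each segment by $L_j(f)$ times its arc-length yields $|f(p)-f(q)|\leq L\,d_n(p,q)$; density of $\mathcal{B}$ in $(\FC{n},d_n)$ combined with continuity of $f$ then extends the inequality to all $p,q\in\FC{n}$.

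Together these steps give the first two assertions, and the final claim about $\Kantorovich{D_n}$ follows by evaluating at Dirac point masses $\delta_x,\delta_y$ with $x,y\in\FC{n}$: the resulting supremum is precisely the Kantorovich--Rubinstein dual expression for the classical Lipschitz seminorm $\Lip_n$, which recovers $d_n$ as the geodesic distance on $\FC{n}$. The hardest part I anticipate is the telescoping argument in the infinite case $n=\infty$, where the concatenation may involve countably many segments and one must carefully verify that the pointwise bound $\sum_k L_{j_k}(f)\cdot(\text{arc-length of segment})\leq L\cdot d_\infty(p,q)$ passes to the limit, together with the density-continuity argument needed to pass from $p\in\mathcal{B}$ to general $p\in\FC{\infty}$.
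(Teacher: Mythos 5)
Your overall architecture --- reduce the commutator norm to per-curve Lipschitz constants, then reassemble these into the Lipschitz seminorm for $d_n$ by telescoping along geodesics --- is the same as the paper's, and the block-diagonal commutator decomposition, the easy inequality $L_j(f)\leq\Lip_n(f)$, and the final evaluation of $\Kantorovich{D_n}$ at Dirac masses are all sound. The genuine gap is in your central reassembly step for finite $n$. Condition (2) of Definition \ref{fractal-curve-def} only asserts that, for $p\in\mathcal{B}$ and $q\in X$, some geodesic \emph{in the full fractal $X=\FC{\infty}$} is a concatenation of curves drawn from the \emph{entire} sequence $(C_j)_{j\in\N}$. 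It gives you nothing about geodesics of the truncation $(\FC{n},d_n)$: a $d_\infty$-geodesic is in general not a $d_n$-geodesic (the paper's explicit computation on $\SG{0}$ shows $d_0\neq d_\infty$ away from the vertices), and the curves appearing in the concatenation may have indices $j>B_n$ and hence leave $\FC{n}$ entirely. Your appeal to ``density of $\mathcal{B}$ in $(\FC{n},d_n)$'' also fails for finite $n$: the endpoints of $C_0,\dots,C_{B_n}$ form the finite set $V_n$, and the remaining points of $\mathcal{B}$ need not even lie in $\FC{n}$, so $\mathcal{B}\cap\FC{n}$ is far from dense in $\FC{n}$.

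The paper closes exactly this gap with a direct structural argument that does not invoke condition (2) for finite $n$: an arbitrary $d_n$-geodesic $\gamma$ from $x$ to $y$ is injective, so it meets the finite set $V_n$ at finitely many parameter values $t_1<\dots<t_r$; on each interval $(t_j,t_{j+1})$ it avoids all vertices and therefore, by connectedness and the intermediate value theorem, stays in the range of a single $C_h$; telescoping the per-curve bound over these finitely many segments yields $|f(x)-f(y)|\leq d_n(x,y)$. The case $n=\infty$ --- where your concatenation-plus-density argument would in fact be legitimate --- is then handled in the paper by reducing to the finite cases on the dense vertex set $V_\infty$ and using continuity. To salvage your route you would need either to prove separately that $d_n$-geodesics in $\FC{n}$ admit the concatenation structure you assume, or to replace that step with the vertex-cutting argument above.
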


\begin{remark}
  It is not sufficient to show that the restriction of $\Kantorovich{D_n}$ is $d_n$ in order to conclude that Theorem \ref{Lip-is-Lipschitz-thm} holds --- see, for instance, \cite{Latremoliere15-0}, where two different L-seminorms on the continuous functions over the Cantor set give the same metric on the Cantor set but \emph{not} on the state space.
\end{remark}

\begin{proof}[Proof of Theorem \ref{Lip-is-Lipschitz-thm}]
Fix $n\in\Nbar$. By construction, since for all $f \in C(\FC{n})$, such that $f\dom{D_n}\subseteq\dom{D_n}$,
\begin{equation*}
  \opnorm{\left[D_n,f\right]}{}{\Hilbert_n} = \sup_{j\in\N,j\leq B_n} \opnorm{\left[\frac{1}{\lambda_j}\Dirac,f \right]}{}{\mathscr{J}}\text{,}
\end{equation*}
we conclude that for all $k\geq 0$, the following two assertions are equivalent:
\begin{itemize}
\item $\opnorm{\left[D_n,f\right]}{}{\Hilbert_n} \leq k$,
\item for all $j\in \{0,\ldots,B_n\}$, the restriction of $f$ to the curve $C_j$ is $k$-Lipschitz for $d_n$.
\end{itemize}

We first work with $n\in\N$.

First, let $f \in C(\FC{n})$ such that $\opnorm{\left[D_n,\pi_n(f)\right]}{}{\Hilbert_n} \leq 1$. 

Furthermore, let $x,y \in C_j$ for $j \in \{0, \ldots, B_n\}$. Finally, let  $t,t'\in [0,1]$ such that $C_j(t) = x$ and $C_j(t') = y$. We then conclude that
\begin{equation*}
  |f\circ C_j(t') - f\circ C_j(t)| \leq \lambda_j | t - t' | = d_n(C_j(t),C_j(t')) = d_n(x,y)\text.
\end{equation*}
Indeed, $\opnorm{[\Dirac,\varpi(f\circ C_j)]}{}{\mathscr{J}} \leq \lambda_j$ and $C_j$ is a geodesic in $\FC{\infty}$ and thus also in $\FC{n}$. 

Now, let $x,y \in \FC{n}$. Let $\gamma$ be a geodesic from $x$ to $y$ in $\FC{n}$, and recall that $\gamma$ is injective. Let $\lambda=\mathrm{length}(\gamma)=d_n(x,y)$. Since $V_n$ is finite, there exists a finite set $F \subseteq [0,1]$ such that $\gamma(t) \in V \iff t \in F$. Write $F = \{ t_1,\ldots,t_r \}$, with $t_1 < \ldots < t_r$. Let $t_0$ be such that $\gamma(t_0) = x$ and let $t_{r+1}$ be such that $\gamma(t_{r+1}) = y$.

Moreover, let $j\in\{0,\ldots,r\}$. Since, by construction, $\gamma|(t_j,t_{j+1})$ does not contain a vertex, it follows by continuity and the intermediate value theorem that $\gamma(t_j)$ and $\gamma(t_{j+1})$ belong to the same curve $C_h$, for some $h\in\{0,\ldots,B_n\}$. Thus, $|f(\gamma(t_{j+1}))-f(\gamma(t_j))|\leq d_n(\gamma(t_j),\gamma(t_{j+1}))$. We now have successively:
\begin{align*}
  |f(x) - f(y)|
  &\leq \sum_{j=0}^r |f(\gamma(t_{j+1}))-f(\gamma(t_j))| \\
  &\leq \sum_{j=0}^r d_n(\gamma(t_{j}),\gamma(t_{j+1}))  \\
  &= \sum_{j=0}^r \lambda |t_{k+1} - t_k| = \lambda \sum_{j=0}^r (t_{k+1}-t_k) = \lambda (t_{r+1}-t_0) \\
  &= d_n(x,y) \text{.}
\end{align*}

Hence, we conclude that for all $x,y \in \FC{n}$, we have
\begin{equation*}
  |f(x)-f(y)|\leq d_n(x,y)\text{.}
\end{equation*}
Since $x,y \in \FC{n}$ were arbitrary, we have shown that $\Lip_n(f)\leq 1$. By homogeneity, it follows that for all $f \in C(\FC{n})$, we have
\begin{equation*}
  \Lip_n(f) \leq \opnorm{[D_n,\pi_n(f)]}{}{\Hilbert_n}\text{.}
\end{equation*}

\bigskip

Now, let $f \in C(\FC{n})$ with $\Lip_n(f)\leq 1$. Furthermore, let $j \in \{0,\ldots,B_n\}$, and let $x,y \in C_j$. By definition, there exists $t,t' \in [0,1]$ with $C_j(t)=x$ and $C_j(t')=y$. We then have that
\begin{align*}
  \left| f(x) - f(y) \right|
  &= \left| f\circ C_j(t) - f\circ C_j(t') \right| \\
  &\leq d_n(C_j(t),C_j(t')) \leq \lambda_j | t - t' | \text{.}
\end{align*}
Thus, $f\circ C_j$ is a $\lambda_j$-Lipschitz on $[0,1]$, and therefore, $\opnorm{[\Dirac,\varpi(f\circ C_j)]}{}{\mathscr{J}} \leq \lambda_j$. Hence, $\opnorm{\left[\frac{1}{\lambda_j}\Dirac,\varpi(f\circ C_j)\right]}{}{\mathscr{J}} \leq 1$. Since $j$ is arbitrary in $\{0,\ldots,B_n\}$, we conclude that
\begin{equation*}
  \opnorm{\left[D_n,\pi_n(f)\right]}{}{\Hilbert_n} = \sup_{j\in\N,j\leq B_n}\opnorm{\left[\frac{1}{\lambda_j}\Dirac, \varpi(f\circ C_j)\right]}{}{\mathscr{J}} \leq 1 \text{.}
\end{equation*}

Consequently, for all $f \in C(\SG{n})$, we have shown, as desired, that
\begin{equation*}
  \Lip_n(f) = \opnorm{[D_n,\pi_n(f)]}{}{\Hilbert_n} \text{.}
\end{equation*}

For $n=\infty$, we can proceed as follows.

First, let $f \in C(\HG{\infty})$ with $\opnorm{[D_\infty,\pi_\infty(f)]}{}{\Hilbert_\infty} \leq 1$. Thus, for all $n\in\N$, we then have that $\opnorm{[D_n,\pi_n(f)]}{}{\Hilbert_n}\leq 1$.

Let $v,w \in V_\infty$, therefore $v,w \in V_N$ for some $N$. We have seen that $d_\infty(v,w) = d_n(v,w)$. If $\Lip_\infty(f) \leq 1$, then $\Lip_n(f)\leq 1$ and, as above, $|f(v)-f(y)|\leq d_n(v,w) = d_\infty(v,w)$. By continuity, since $V_\infty$ is dense in $\HG{\infty}$, we conclude that $|f(x)-f(y)|\leq d_\infty(x,y)$, for all $x,y \in \FC{\infty}$. Hence, $\Lip_\infty(f) \leq 1$.

If $f$ is $1$-Lipschitz on $(\HG{\infty},d_\infty)$, then, for any $j\in\N$, its restriction to $C_j$, is $1$--Lipschitz on $(C_j,d_j)$ --- indeed, $d_\infty\leq d_j$ so $\mathrm{Lip}_j(f)\leq \mathrm{Lip}_\infty(f) \leq 1$. Thus, $\opnorm{\left[\frac{1}{\lambda_j}\Dirac,\varpi(f\circ C_j)\right]}{}{} \leq 1$. By construction, it follows that $\Lip_\infty(f) \leq 1$.

By homogeneity, we conclude that $\opnorm{[D_\infty,\pi_\infty(f)]}{}{\Hilbert_\infty} = \Lip_\infty(f)$, for all $f \in C(\FC{\infty})$. This completes the proof of our theorem.
\end{proof}

Spectral triples contain other geometric data besides the metric information, such as how to recover from it the Hausdorff measure and the Hausdorff dimension of $\SG{\infty}$, via a Dixmier trace construction. We refer to \cite{Lapidus08} for some of these properties in the case of $C(\SG{\infty},\Hilbert_\infty,D_\infty)$. (See also, e.g., \cite{twoquarter, Connes, sixfourfifth, fourteenfourfifth, Lapidus94,  Lapidus97} for related results in various contexts.)

\bigskip

We conclude this section by adopting several additional conventions. If $n\leq m$, we then identify $\Hilbert_n$ with a subspace of $\Hilbert_m$ via the linear embedding
\begin{equation*}
  \left(\xi_{j}\right)_{j\in\N,j\leq B_n} \longmapsto \left( \begin{cases} \xi_{j} \text{, if $j \leq B_n$} \\ 0 \text{, otherwise} \end{cases} \right)_{j\in\N,j\leq B_m} \text{.}
\end{equation*}
Moreover, if $f \in C(\FC{m})$ and $\xi \in \Hilbert_n$, we then write $f\xi$ for $\pi_n(h)\xi$, where $h$ is the restriction of $f$ to $\FC{n}$. In particular, we will dispense with the notation $\pi_n$ in the remainder of this paper whenever no confusion may arise.

\subsection{Modular Convergence}

Let $(\A,\Hilbert,D)$ be a metric spectral triple. By \cite{Latremoliere18g}, if we define $\CDN$ and $\Lip_\D$ as follows,
\begin{equation*}
  \forall \xi \in \dom{D}, \quad \CDN(\xi) = \norm{\xi}{\Hilbert} + \norm{D\xi}{\Hilbert}
\end{equation*}
where $\dom{D}$ is the domain of $D$, and
\begin{equation*}
  \forall a \in \dom{\Lip_D}, \quad \Lip_D(a) = \opnorm{[D,a]}{}{\Hilbert},
\end{equation*}
where
\begin{equation*}
  \dom{\Lip_D} = \left\{ a \in \sa{\A} : a\dom{D}\subseteq\dom{D},[D,a]\text{ is bounded} \right\}\text,
\end{equation*}
then the tuple
\begin{equation}\label{qvb-def-eq}
  \qvb{\A}{\Hilbert}{D} = (\Hilbert,\CDN,\C,0,\A,\Lip_D)
\end{equation}
is an example of a \emph{quantum metrical vector bundle}, in the following sense.

\begin{definition}[{\cite{Latremoliere16c,Latremoliere18d}}]\label{qvb-def}
  A \emph{quantum metrical vector bundle}
  \begin{equation*}
    (\module{M},\CDN,\B,\Lip_\B,\A,\Lip_\A)
  \end{equation*}
  is given by two {\qcms s} $(\A,\Lip_\A)$ and $(\B,\Lip_\B)$, a right Hilbert $\B$-module which also carries a left $\A$-module structure, and a norm $\CDN$ defined on a dense $\A$-submodule, $\dom{\CDN}$, of $\module{M}$ such that the following properties hold:
  \begin{enumerate}
  \item $\forall \omega \in \dom{\CDN}, \quad \norm{\omega}{\module{M}} \leq \CDN(\omega)$,
  \item $\{ \omega\in\dom{\CDN} : \CDN(\omega) \leq 1 \}$ is compact in $\norm{\cdot}{\module{M}}$,
  \item for all $\omega,\eta\in\module{M}$, denoting $b = \inner{\omega}{\eta}{\module{M}} \in \B$, we have
    \begin{equation*}
      \max\left\{ \Lip_\B\left(\frac{b+b^\ast}{2}\right), \Lip_\B\left(\frac{b-b^\ast}{2i}\right)\right\} \leq 2 \CDN(\omega)\CDN(\eta)
    \end{equation*}
    (we refer to this inequality as the \emph{inner Leibniz inequality}),
  \item for all $a\in\dom{\Lip_\A}$ and $\omega\in \dom{\CDN}$, we have
    \begin{equation*}
      \CDN(a\omega) \leq \left(\norm{a}{\A} + \Lip_\A(a)\right)\CDN(\omega)\text{.}
    \end{equation*}
    (we refer to this inequality as the \emph{modular Leibniz inequality}).
  \end{enumerate}
  The norm $\CDN$ is called a \emph{D-norm}.
  
  When $(\module{M},\CDN,\B,\Lip_\B,\C,0)$ is a metrical quantum vector bundle, the tuple
  \begin{equation*}
    (\module{M}, \CDN,\B,\Lip_\B)
  \end{equation*}
  is called a \emph{\gQVB}. 
\end{definition}

\begin{remark}
  When $\Hilbert$ is a Hilbert space, it can be seen as a right module over $\C$ trivially, by setting $\omega\cdot z = z\omega$ for all $z\in\C$ and $\omega\in\Hilbert$, since $\C$ is Abelian. We will typically write scalars as usual  on the left when working with Hilbert spaces, while implicitely considering them as right modules when part of a metrical quantum vector bundle.
\end{remark}

While in \cite{Latremoliere18d}, more general forms of the modular and inner Leibniz inequalities are allowed, Definition \ref{qvb-def} is the important special case which we use when working with spectral triples.

We use the following natural notions of morphisms between Hilbert modules, which will underlie the various notions of isomorphisms for quantum metrical vector bundles.

\begin{definition}
  Let $\A$,$\B$ be two unital C*-algebras. A \emph{left module morphism} $(\Pi,\pi)$ from a left $\A$-module $\module{M}$ to a lft $\B$-module $\module{N}$ is given by the following data:
  \begin{itemize}
  \item a unital *-morphism $\pi : \A\rightarrow\B$,
  \item a linear map $\Pi : \module{M}\rightarrow\module{N}$ such that
    \begin{equation*}
      \forall a \in \A, \quad \forall \omega \in \module{M}, \quad \Pi(a \omega) = \pi(a)\Pi(\omega) \text{.}
    \end{equation*}
  \end{itemize}

  The module morphism $(\Pi,\pi)$ is said to be \emph{surjective} when both $\Pi$ and $\pi$ are surjective maps, and it is said to be an \emph{isomorphism} when both $\Pi$ and $\pi$ are bijections.

  A \emph{right module morphism} is defined similarly.
  
  A Hilbert module morphism $(\Pi,\pi)$ from a right $\A$-Hilbert module $\module{M}$ to a right $\B$-Hilbert module $\module{N}$ is a module morphism when
  \begin{equation*}
    \forall \omega,\xi \in \module{M}, \quad \inner{\Pi(\omega)}{\Pi(\xi)}{\module{N}} = \inner{\omega}{\xi}{\module{M}} \text{.}
  \end{equation*}
\end{definition}

We refer to \cite{Latremoliere16c} for examples of such a structure and for its motivations.

\bigskip

Our next step in proving the convergence of $(C(\FC{n}),\Hilbert_n,D_n)$ to $(C(\FC{\infty}),\Hilbert_\infty,D_\infty)$ is to establish the convergence of their associated metrical quantum vector bundles.

\begin{notation}
To begin with, for each $n\in\Nbar$, we consider $\Hilbert_n$ as a $\C$-Hilbert module endowed with the following D-norm:
\begin{equation*}
  \forall \xi \in \dom{D_n}, \quad \CDN_n(\xi) = \norm{\xi}{\Hilbert_n} + \norm{D_n\xi}{\Hilbert_n} \text{.}
\end{equation*}
Thus, by \cite{Latremoliere18g}, $(\Hilbert_n,\CDN_n,\C,0)$ is a {\gQVB}. We next compute the modular propinquity between $(\Hilbert_n,\CDN_n,\C,0)$ and $(\Hilbert_\infty,\CDN_\infty,\C,0)$.
\end{notation}

The third author defined the modular propinquity in \cite{Latremoliere16c,Latremoliere18d} by extending the notion of a tunnel between {\qcms s} to the notion of a tunnel between {\gQVB s}.

\begin{definition}[{\cite{Latremoliere18d}}]
  Let $(\module{M}_j,\CDN^j,\A_j,\Lip_j)$ be a {\gQVB}, for $j\in\{1,2\}$. A \emph{modular tunnel} $(\mathds{D},(\Pi_1,\pi_1),(\Pi_2,\pi_2))$ is given by
  \begin{enumerate}
  \item a {\gQVB} $\mathds{D} = (\module{P}, \CDN', \D, \Lip_\D)$,
  \item a tunnel $(\D,\Lip_\D,\pi_1,\pi_2)$ from $(\A_1,\Lip_1)$ to $(\A_2,\Lip_2)$,
  \item for each $j\in\{1,2\}$, $(\Pi_j,\pi_j)$ is a surjective Hilbert module morphism from $\module{P}$ (over $\A_1$) to $\module{M}_j$ such that
    \begin{equation*}
      \forall \omega \in \module{M}_j, \quad \CDN^j(\omega) = \inf\left\{ \CDN'(\eta) : \Pi_j(\eta) = \omega \right\} \text{.}
    \end{equation*}
  \end{enumerate}
\end{definition}

The extent of a modular tunnel is computed just like the extent of the underlying tunnel between {\qcms s}, as we see in the following definition.

\begin{definition}[{\cite{Latremoliere18d}}]
  The \emph{extent}, $\tunnelextent{\tau}$, of a modular tunnel $\tau = (\mathds{D},(\Pi_1,\pi_1),(\Pi_2,\pi_2))$, with $\mathds{D} = (\module{P},\CDN',\D,\Lip_\D)$, is the extent of the tunnel $(\D,\Lip_\D,\pi_1,\pi_2)$.
\end{definition}

The modular propinquity is then defined along the same lines as the propinquity.

\begin{definition}[{\cite{Latremoliere16c,Latremoliere18d}}]
  The \emph{modular propinquity}, $\modpropinquity{}(\mathds{A},\mathds{B})$, between two {\gQVB s} $\mathds{A}$ and $\mathds{B}$ is the nonnegative number given by
  \begin{equation*}
    \dmodpropinquity{}(\mathds{A},\mathds{B}) = \inf\left\{\tunnelextent{\tau} : \text{ $\tau$ is a modular tunnel from $\mathds{A}$ to $\mathds{B}$} \right\}\text{.}
  \end{equation*}
\end{definition}

We now record a few fundamental properties of the modular propinquity.

\begin{theorem}[{\cite{Latremoliere16c,Latremoliere18d}}]
  The modular propinquity is a complete metric, up to the equivalence according to which two {\gQVB s} $(\module{M},\CDN,\A,\Lip_\A)$ and $(\module{N},\CDN',\B,\Lip_\B)$ are fully isometrically isomorphic if and only if there exists a Hilbert module isomorphism $(\Pi,\pi)$ such that
  \begin{itemize}
  \item $\Lip_\B\circ\pi = \Lip_\A$,
  \item $\CDN'\circ\Pi = \CDN$.
  \end{itemize}
\end{theorem}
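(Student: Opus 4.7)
The plan is to mimic, one level higher, the proof that the dual Gromov--Hausdorff propinquity is a complete metric on {\qcms s} (see \cite{Latremoliere13b}), systematically tracking the Hilbert module data alongside the base {\qcms}. Four items must be established: symmetry, triangle inequality, characterization of null distance as full isomorphism, and completeness. Symmetry is immediate, since reversing the two surjective Hilbert module morphisms in a modular tunnel produces a modular tunnel between the same {\gQVB s} in reverse order with identical extent.

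For the triangle inequality, given modular tunnels $\tau=(\mathds{D},(\Pi_1,\pi_1),(\Pi_2,\pi_2))$ from $\mathds{A}_1$ to $\mathds{A}_2$ and $\tau'=(\mathds{D}',(\Pi'_2,\pi'_2),(\Pi_3,\pi_3))$ from $\mathds{A}_2$ to $\mathds{A}_3$, I would use a fibered-composition construction. At the base C*-algebra level, this is the standard tunnel built on the algebra $\{(d,d')\in\D\oplus\D' : \pi_2(d)=\pi'_2(d')\}$ with seminorm
\begin{equation*}
\max\left\{\Lip_\D(d),\Lip_{\D'}(d'),\tfrac{1}{\varepsilon}\norm{\pi_2(d)-\pi'_2(d')}{\A_2}\right\},
\end{equation*}
for a parameter $\varepsilon>0$. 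At the module level, fiber the two Hilbert modules correspondingly by taking pairs $(\omega,\omega')$ with $\Pi_2(\omega)=\Pi'_2(\omega')$, equipped with the D-norm given by the maximum of the two given D-norms. The inner and modular Leibniz inequalities of Definition \ref{qvb-def} propagate by direct bookkeeping, the coordinate projections are again surjective Hilbert module morphisms whose quotient D-norms recover the ones on $\module{M}_1$ and $\module{M}_3$, and the extent of the composite is bounded above by $\tunnelextent{\tau}+\tunnelextent{\tau'}+2\varepsilon$. Letting $\varepsilon\to 0$ gives subadditivity.

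For the characterization of null distance, suppose $\dmodpropinquity{}(\mathds{A},\mathds{B})=0$ with $\mathds{A}=(\module{M},\CDN_\A,\A,\Lip_\A)$ and $\mathds{B}=(\module{N},\CDN_\B,\B,\Lip_\B)$. Select modular tunnels $\tau_n$ of vanishing extent; the base-level version of the theorem already yields a full quantum isometry $\pi:\A\to\B$. To lift $\pi$ to the modules, exploit that the $\CDN$-unit balls are norm-compact (Definition \ref{qvb-def}(2)): for each $\omega$ in a countable norm-dense subset of the $\CDN_\A$-unit ball, the images through the tunnel data in $\module{N}$ form a relatively compact sequence, and a diagonal extraction yields a linear map $\Pi:\module{M}\to\module{N}$ that preserves the D-norm, intertwines the left module actions through $\pi$, and preserves the Hilbert inner product. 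The pair $(\Pi,\pi)$ is then the required full module isomorphism; the converse implication is routine by using $(\mathds{A},(\mathrm{id},\mathrm{id}),(\Pi,\pi))$ as a modular tunnel of extent zero.

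The main obstacle will be completeness. I would pass to a fast-Cauchy subsequence of {\gQVB s} $(\mathds{A}_n)_n$ and fix a coherent family of modular tunnels between consecutive terms whose extents decrease geometrically. At the base level, the inverse-limit construction from \cite{Latremoliere13b} produces a limit {\qcms} $(\A_\infty,\Lip_\infty)$. The module limit is then built from coherent sequences $(\omega_n)_n$, with $\omega_n\in\module{M}_n$ compatible through the chosen tunnel data up to summable errors, modulo asymptotic vanishing in norm, with D-norm defined as the $\liminf$ of $\CDN_n(\omega_n)$. The key technical step is verifying that this D-norm has norm-compact unit ball, which requires uniform equicontinuity estimates extracted from the tunnel sequence; the inner and modular Leibniz inequalities transfer to the $\liminf$ because they are closed conditions. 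Finally, one checks that the natural projections onto each $\mathds{A}_n$ assemble into modular tunnels whose extents tend to zero, yielding convergence of the whole sequence.
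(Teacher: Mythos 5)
First, a point of order: the paper does not prove this theorem --- it is quoted from \cite{Latremoliere16c,Latremoliere18d} --- so there is no in-paper argument to compare yours against; your proposal has to be measured against the proofs in those references. Your overall strategy does match them: symmetry by reversal, triangle inequality by a fibered composition of modular tunnels sitting over the usual fibered composition of base tunnels, coincidence via compactness of the D-norm unit balls, and completeness via an inverse-limit construction over a fast Cauchy subsequence. The fibered product at the module level is legitimately a Hilbert module over the fibered product algebra (the inner products match because the $(\Pi_2,\pi_2)$ and $(\Pi_2',\pi_2')$ are Hilbert module morphisms), and the quotient condition for the max of the two D-norms follows by lifting twice --- though note this silently uses that the infima in the quotient conditions of the individual tunnels are attained, which holds by lower semicontinuity and compactness of the D-norm unit balls and should be said.

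The genuine gaps are at the two hardest points. In the coincidence argument, relative compactness of the images of a fixed $\omega$ through the tunnels only yields convergent subsequences; it does not make the limit assignment well defined, let alone linear, module-intertwining, or inner-product preserving. What is actually needed is a quantitative estimate that the \emph{target set} of $\omega$ through a modular tunnel of extent $\varepsilon$ --- all $\omega'$ in the codomain admitting a common lift of D-norm at most $\CDN(\omega)$ --- has diameter tending to $0$ with $\varepsilon$, uniformly on D-norm balls; this is precisely where the inner Leibniz inequality of Definition \ref{qvb-def} enters, since it allows one to control $\norm{\omega'-\omega''}{\module{N}}$ by testing inner products against the compact unit ball and transferring the resulting elements of the base algebra through the base tunnel. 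Without that diameter estimate, your diagonal extraction produces a relation, not a map, and additivity of $\Pi$ cannot be checked. The same estimate is what makes the completeness argument work: ``uniform equicontinuity extracted from the tunnel sequence'' does not identify the mechanism by which the unit ball of the $\liminf$ D-norm is totally bounded --- one must propagate finite $\varepsilon$-nets through the coherent tunnels using the target-set diameters and the summability of the extents. As written, the proposal names the right milestones but leaves unproved exactly the steps that carry the weight of the theorem.
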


Note that in the definition of a tunnel, we allow --- as we must --- Hilbert modules over C*-algebras which are not necessarily equal to $\C$. In fact, if we restricted tunnels between Hilbert spaces to only involve $\C$ as a C*-algebra, then their extent would always be null, but this is obvious since such a tunnel can only be between full isometric {\gQVB s}.

We next construct our modular tunnels and obtain an estimate on the modular propinquity between $(\Hilbert_n,\CDN_n,\C,0)$ and $(\Hilbert_\infty,\CDN_\infty,\C,0)$, for all $n\in\N$.

\begin{lemma}\label{modular-conv-lemma}
  Assume Hypothesis \ref{main-hyp}. Then, the following limit holds:
  \begin{equation*}
    \lim_{n\rightarrow\infty} \dmodpropinquity{}\left( (\Hilbert_n,\CDN_n,\C,0), (\Hilbert_\infty,\CDN_\infty,\C,0) \right) = 0 \text{.}
  \end{equation*}
\end{lemma}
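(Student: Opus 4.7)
The plan is to follow the template of the alternate proof of Theorem \ref{prop-conv-thm}, constructing for each $n\in\N$ an explicit modular tunnel between $(\Hilbert_\infty,\CDN_\infty,\C,0)$ and $(\Hilbert_n,\CDN_n,\C,0)$ whose extent tends to $0$. The natural intertwiners between the two Hilbert spaces are the orthogonal projection $P_n:\Hilbert_\infty\twoheadrightarrow\Hilbert_n$ obtained by truncating to the first $B_n+1$ direct summands, together with the zero-padding embedding $\tilde{\cdot}:\Hilbert_n\hookrightarrow\Hilbert_\infty$. By construction $D_nP_n=P_nD_\infty$ on $\dom{D_\infty}$ (hence $\CDN_n(P_n\xi)\leq\CDN_\infty(\xi)$) and $\CDN_\infty(\tilde\eta)=\CDN_n(\eta)$, so these are the quotient-minimizers we shall need.

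The key analytic ingredient is a tail estimate coming from $\spectrum{\Dirac}=\{\pi(k+\tfrac{1}{2}):k\in\Z\}$, which gives $\|\Dirac\zeta\|_{\mathscr{J}}\geq(\pi/2)\|\zeta\|_{\mathscr{J}}$ for $\zeta\in\dom{\Dirac}$ and hence, for $\xi=(\xi_j)_j\in\dom{D_\infty}$,
\begin{equation*}
  \|(I-P_n)\xi\|_{\Hilbert_\infty}^{2} = \sum_{j>B_n}\|\xi_j\|_{\mathscr{J}}^{2} \leq \frac{4\lambda_{\max,n}^{2}}{\pi^{2}}\|D_\infty\xi\|_{\Hilbert_\infty}^{2},
\end{equation*}
where $\lambda_{\max,n} := \sup_{j>B_n}\lambda_j$, which tends to $0$ as $n\to\infty$ by Hypothesis \ref{main-hyp} via Definition \ref{fractal-curve-def}.

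I would then take the base C*-algebra of the tunnel to be $\D=\C^{2}$ with $\Lip_\gamma(z_1,z_2)=\gamma|z_1-z_2|$ for a parameter $\gamma>0$, so that the coordinate epimorphisms $\pi_1,\pi_2:\C^{2}\twoheadrightarrow\C$ form a tunnel from $(\C,0)$ to $(\C,0)$ of extent $1/\gamma$. Over this base I put $\module{P}_n=\Hilbert_\infty\oplus\Hilbert_n$ as a right Hilbert $\C^{2}$-module with componentwise action and $\C^{2}$-valued inner product, with the two coordinate projections $\Pi_1,\Pi_2$ covering $\pi_1,\pi_2$ (these are clearly Hilbert module morphisms), and equip it with the D-norm
\begin{equation*}
  \CDN'_{n,\beta}(\xi,\eta) = \max\!\left\{\CDN_\infty(\xi),\ \CDN_n(\eta),\ \beta\|P_n\xi-\eta\|_{\Hilbert_n}\right\}
\end{equation*}
for a second parameter $\beta>0$. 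The quotient D-norm condition for $\Pi_1$ is realized at $\eta=P_n\xi$ and for $\Pi_2$ at $\xi=\tilde\eta$ by the two compatibilities above; compactness of the $\CDN'$-unit ball follows from that of the two coordinate unit balls, and the modular Leibniz for the trivial left $\C$-action is immediate.

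The main obstacle will be verifying the inner Leibniz inequality with the required constant $2$. Splitting $\omega_1=P_n\omega_1+(I-P_n)\omega_1$ and similarly for $\eta_1$, the triangle inequality yields
\begin{equation*}
  \bigl|\inner{\omega_1}{\eta_1}{\Hilbert_\infty}-\inner{\omega_2}{\eta_2}{\Hilbert_n}\bigr| \leq \left(\frac{2}{\beta}+\frac{4\lambda_{\max,n}^{2}}{\pi^{2}}\right)\CDN'_{n,\beta}(\omega)\,\CDN'_{n,\beta}(\eta),
\end{equation*}
where the first two terms come from the definition of $\CDN'$ and the last from the tail estimate above. Fixing $\gamma=\gamma_n$ by $\gamma_n\bigl(2/\beta_n+4\lambda_{\max,n}^{2}/\pi^{2}\bigr)=2$ then yields exactly the required Leibniz constant, while giving extent $1/\gamma_n=1/\beta_n+2\lambda_{\max,n}^{2}/\pi^{2}$; choosing $\beta_n=1/\lambda_{\max,n}$, the extent is $O(\lambda_{\max,n})\to 0$. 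This produces modular tunnels $\tau_n$ with $\tunnelextent{\tau_n}\to 0$, and hence $\dmodpropinquity{}\bigl((\Hilbert_n,\CDN_n,\C,0),(\Hilbert_\infty,\CDN_\infty,\C,0)\bigr)\to 0$, as claimed.
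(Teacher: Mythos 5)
Your proposal is correct and follows essentially the same route as the paper's proof: a modular tunnel on $\Hilbert_\infty\oplus\Hilbert_n$ over the base $\C\oplus\C$ equipped with a seminorm proportional to $|z-w|$, with the quotient conditions realized by the truncation $P_n$ and the zero-padding embedding, and with the key tail estimate coming from $\spectrum{\Dirac}=\left\{\pi\left(k+\frac{1}{2}\right):k\in\Z\right\}$ together with $\lambda_j\rightarrow 0$. The only (harmless) deviation is that you penalize $\norm{P_n\xi-\eta}{\Hilbert_n}$ rather than $\norm{\xi-\tilde\eta}{\Hilbert_\infty}$, so the tail term must be absorbed into the inner Leibniz constant by tuning $\beta_n$ and $\gamma_n$, whereas the paper's coupling term yields the Leibniz constant $2$ directly.
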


\begin{proof}
Let $\varepsilon > 0$. There exists $N \in \N$ such that if $j\geq N$, then $\lambda_j < \frac{\pi \varepsilon}{2}$, by Condition (1) of Definition \ref{fractal-curve-def}.

Let $\xi=(\xi_j)_{j\in\N,j\leq B_n} \in \dom{D_\infty}$. Using the Hilbert basis $(e_k)_{k\in\Z}$ of $\mathscr{J}$ of eigenvectors of $\Dirac$, we write $\xi_{j} = \sum_{k\in\Z} t_{j,k} e_k$, for $(t_{j,k})_{k\in\Z} \in \ell^2(\Z)$ and for all $j\in\N$.

We now obtain the following lower bound:
\begin{align*}
  \norm{D_\infty\xi}{\Hilbert_\infty}^2
  &= \sum_{j\in\N} \norm{\frac{1}{\lambda_j^2} \Dirac \xi_{j}}{\mathscr{J}}^2\\
  &= \sum_{j\in\N} \sum_{k\in\Z} \frac{\pi^2}{\lambda_j^2}\left(k+\frac{1}{2}\right)^2 |t_{j,k}|^2 \\
  &\geq \sum_{j\geq N} \sum_{k\in\Z} \frac{\pi^2}{\lambda_j^2}\left(k+\frac{1}{2}\right)^2|t_{j,k}|^2 \text{.}
\end{align*}

If $\CDN_\infty(\xi) \leq 1$, then we conclude from the above expression that
\begin{equation*}
\sum_{j \geq N} \sum_{k\in\Z} |t_{j,k}|^2 \frac{\pi^2\left(k+\frac{1}{2}\right)^2}{\frac{\pi^2 \varepsilon^2}{4}} \leq \sum_{j \geq N} \sum_{k\in\Z} |t_{j,k}|^2 \frac{\pi^2\left(k+\frac{1}{2}\right)^2}{\lambda_j^2} \leq 1\text.
\end{equation*}
Thus, if $\CDN_\infty(\xi)\leq 1$, and since $(k+1)^2 \geq \frac{1}{4}$ for all $k\in\Z$, we deduce that
\begin{align*}
  \sum_{j \geq N+1}\norm{\xi_{j}}{\mathscr{J}}^2
  &= \sum_{j \geq N+1} \sum_{k\in\Z}|t_{j,k}|^2 \\
  &\leq 4 \sum_{n\geq N+1}\sum_{k\in\Z}\left(k+\frac{1}{2}\right)^2\pi^2|t_{j,k}|^2 \\
  &< \varepsilon^2 \text{.}
\end{align*}

Let $n\geq N+1$. We set, for $(\xi,\eta)\in\Hilbert_\infty\oplus\Hilbert_n$,
\begin{equation*}
  \mathsf{T}_n(\xi,\eta) = \max\left\{ \CDN_\infty(\xi), \CDN_n(\eta), \frac{1}{\varepsilon}\norm{\xi-\eta}{\Hilbert_\infty} \right\}\text{.}
\end{equation*}

Let $\Pi_\infty : (\xi,\eta) \in \Hilbert_\infty\oplus\Hilbert_n \mapsto \xi \in \Hilbert_\infty$ and $\Pi_n : (\xi,\eta) \in \Hilbert_\infty\oplus\Hilbert_n \mapsto \eta \in \Hilbert_n$. We begin by proving that the quotient of $\mathsf{T}_n$ for $\Pi_\infty$ is $\CDN_\infty$ and the quotient of $\mathsf{T}_n$ for $\Pi_n$ is $\CDN_n$.

\bigskip

If $\eta\in\Hilbert_{n}$ with $\CDN_n(\eta)\leq 1$, we then let $\xi = \eta$. It is immediate, by definition, that $\CDN_\infty(\eta) = \CDN_n(\eta) \leq 1$ and $\norm{\eta-\xi}{\Hilbert_\infty} = 0$; so that $\mathsf{T}_n(\xi,\eta) \leq 1$.

If $\xi \in \Hilbert_\infty$ with $\CDN_\infty(\xi) \leq 1$, we then let $\eta$ be the orthogonal projection of $\xi$ onto $\Hilbert_n$. Then, again by construction, $\CDN_n(\eta)\leq \CDN_\infty(\xi) \leq 1$. Moreover, our choice for $N$ guarantees that $\norm{\xi-\eta}{\Hilbert_\infty} < \varepsilon$; so that $\mathsf{T}_n(\xi,\eta)\leq 1$. Hence, as claimed, $\mathsf{T}_n$ quotients to both $\CDN_\infty$ and $\CDN_n$.

\bigskip

We now prove that $\mathsf{T}_n$ is a D-norm for $\module{M} = \Hilbert_\infty \oplus \Hilbert_n$, seen as a $\C\oplus\C$-Hilbert module via the following action and inner product:
\begin{equation*}
  \forall (z,w)\in \C\oplus\C, \quad \forall (\xi,\eta)\in \module{M}, \quad (\xi,\eta)\cdot(z,w) = (z\xi,w\eta)
\end{equation*}
and
\begin{equation*}
  \forall (\xi,\eta),(\xi',\eta')\in\module{M}, \quad \inner{(\xi,\eta)}{(\xi',\eta')}{\module{M}} = \left( \inner{\xi}{\xi'}{\Hilbert_\infty}, \inner{\eta}{\eta'}{\Hilbert_n} \right) \text{.}
\end{equation*}
To this end, we endow $\C\oplus \C$ with the L-seminorm $\mathsf{Q}$ given by 
\begin{equation*}
  \forall (z,w) \in \C \oplus \C\text, \quad Q(z,w) = \frac{1}{\varepsilon}|z-w| \text{.}
\end{equation*}
It is immediate that $(\C\oplus\C,Q)$ is a {\qcms}.

Let us turn to several needed properties of $\mathsf{T}_n$. As the maximum of three lower semicontinuous functions, $\mathsf{T}_n$ is lower semicontinuous as well. Hence, its unit ball is closed. We also easily see that
\begin{equation*}
  \forall (\xi,\eta)\in\Hilbert_n\oplus\Hilbert_\infty, \quad \mathsf{T}_n(\xi,\eta)\geq \max\{\norm{\xi}{\Hilbert_n},\norm{\eta}{\Hilbert_\infty} \} = \norm{(\xi,\eta)}{\Hilbert_n \oplus \Hilbert_\infty} \text{.}
\end{equation*}

Furthermore,
\begin{multline*}
  \left\{(\xi,\eta)\in\module{M} :\mathsf{T}_n(\xi,\eta)\leq 1\right\}  \\
  \subseteq \left\{\xi\in\Hilbert_\infty: \CDN_\infty(\xi) \leq 1\right\} \times \left\{ \eta\in\Hilbert_n : \CDN_n(\eta) \leq 1 \right\}\text{;}
\end{multline*}
so that the unit ball of $\mathsf{T}_n$ is closed in a compact set, and hence is compact.

Moreover, 
\begin{align*}
  Q\left(\inner{(\xi,\eta)}{(\xi',\eta')}{\module{M}}\right)
  &= \frac{1}{\varepsilon}\left|\inner{\xi}{\xi'}{\Hilbert_\infty} - \inner{\eta}{\eta'}{\Hilbert_n} \right|\\
  &= \frac{1}{\varepsilon}\left|\inner{\xi}{\xi'}{\Hilbert_\infty} - \inner{\eta}{\eta'}{\Hilbert_\infty} \right|\\
  &\leq \frac{1}{\varepsilon}\left(\left|\inner{\xi}{\xi'-\eta'}{\Hilbert_\infty}\right| + \left|\inner{\xi-\eta}{\eta'}{\Hilbert_\infty}\right|\right)\\
  &\leq \norm{\xi}{\Hilbert_\infty}\mathsf{T}_n(\xi',\eta') + \norm{\eta'}{\Hilbert_n}\mathsf{T}_n(\xi,\eta) \\
  &\leq 2 \mathsf{T}_n(\xi,\eta) \mathsf{T}_n(\xi',\eta') \text{.}
\end{align*}

Hence, the inner Leibniz property holds as well; see Assertion (3) of Definition \ref{qvb-def}.

We thus have proven that $\mu_n = \left(\Hilbert_n\oplus\Hilbert_\infty,\mathsf{T}_n,\C\oplus\C,Q\right)$ is a modular tunnel from $(\Hilbert_n,\CDN_n,\C,0)$ to $(\Hilbert_\infty,\CDN_\infty,\C,0)$.

It is immediate to compute that the extent of $(\C\oplus\C,Q)$ is equal to $\varepsilon$. Thus, the lemma is proven.
\end{proof}

We remark that, of course, $\dpropinquity{}((\C,0),(\C,0)) = 0$. However, to give an idea of what occurs in the previous proof, note that if we replace $Q$ with $\delta Q$ for $\delta > 0$ but very small, then $\mathsf{T}_n$ will not satisfy the Leibniz identity. Therefore, the Leibniz condition is what enforces the rigidity which then makes the distance between $(\Hilbert_\infty,\CDN_\infty)$ and $(\Hilbert_n,\CDN_n)$ nonzero.

\bigskip

We next compute how far are the actions of $C(\FC{\infty})$ on $\Hilbert_\infty$ and of $C(\FC{n})$ on $\Hilbert_n$. This is accomplished by bringing together the tunnels from the proof of Theorem \ref{prop-conv-thm} and from Lemma \ref{modular-conv-lemma}. The only thing left to check is another form of the Leibniz property. Indeed, the modular propinquity can be easily extended to metrical quantum vector bundles, as we now explain.

\begin{definition}[{\cite{Latremoliere18d}}]\label{metrical-tunnel-def}
  Let $\mathds{A}^j = (\module{M}_j,\CDN_{\module{M}_j},\A^j,\Lip^j,\B^j,\Lip_j)$, for $j\in\{1,2\}$.

  A \emph{metrical tunnel} $(\tau,\tau')$ from $\mathds{A}^1$ to $\mathds{A}^2$ is given by the following data:
   \begin{enumerate}
     \item a modular tunnel $\tau = (\mathds{D},(\theta_1,\Theta_1),(\theta_2,\Theta_2))$ from $(\module{M}_1,\CDN_{\module{M}_1},\A^1,\Lip^1)$ to $(\module{M}_2,\CDN_{\module{M}_2},\A^2,\Lip^2)$, where we write $\mathds{D} = (\module{P},\CDN,\D,\Lip_\D)$,
     \item a tunnel $\tau' = (\D',\Lip',\pi^1,\pi^2)$ from $(\B^1,\Lip_1)$ to $(\B^2,\Lip_2)$,
     \item $\module{P}$ is also a $\D'$-left module,
     \item $\forall \omega \in \module{P}, \forall d \in \D',\quad \CDN(d\omega)\leq (\Lip'(d)+\norm{d}{\D'})\CDN(\omega)$,
     \item for all $j\in\{1,2\}$, the pair $(\pi^j, \Theta^j)$ is a left module morphism from the left $\D'$-module $\module{P}$ to the left $\A^j$-module $\module{M}_j$.
   \end{enumerate}
\end{definition}

\begin{definition}[{\cite{Latremoliere18d}}]
  The \emph{extent}, $\tunnelextent{\tau,\tau'}$, of a metrical tunnel $(\tau,\tau')$ is given by
    \begin{equation*}
      \tunnelextent{\tau,\tau'} = \max\left\{\tunnelextent{\tau},\tunnelextent{\tau'}\right\} \text{.}
    \end{equation*}
\end{definition}

\begin{definition}[{\cite{Latremoliere18d}}]\label{metrical-prop-def}
  The \emph{metrical propinquity}, $\dmetpropinquity{}(\mathds{A},\mathds{B})$, between two {\gQVB s} $\mathds{A}$ and $\mathds{B}$ is the nonnegative number given by
  \begin{equation*}
    \dmetpropinquity{}(\mathds{A},\mathds{B}) = \inf\left\{\tunnelextent{\tau} : \text{ $\tau$ is a metrical tunnel from $\mathds{A}$ to $\mathds{B}$} \right\}\text{.}
  \end{equation*}
\end{definition}

\begin{theorem}[{\cite{Latremoliere18d}}]
  The metrical propinquity is a complete metric, up to full isometry, on the class of metrical quantum vector bundles. 
\end{theorem}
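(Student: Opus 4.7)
The plan is to mimic the completeness arguments already established in \cite{Latremoliere13b} for the dual propinquity and in \cite{Latremoliere16c,Latremoliere18d} for the modular propinquity, and then add a separate argument to show that the action of the acting {\qcms} on the module passes to the limit. Throughout, I would exploit the fact that a metrical tunnel packages two independent tunnels (a modular tunnel $\tau$ and a base tunnel $\tau'$) joined by two compatibility conditions (Definition \ref{metrical-tunnel-def}(3)-(5)); projecting onto either factor recovers objects whose completeness is already known.

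First, I would verify that $\dmetpropinquity{}$ is a pseudo-metric up to full isometry. Symmetry is immediate: reversing the two tunnels in a metrical tunnel yields a metrical tunnel with identical extent. The triangle inequality should follow by gluing metrical tunnels along a common factor, analogously to the composition of tunnels in \cite{Latremoliere13b}: given metrical tunnels from $\mathds{A}^1$ to $\mathds{A}^2$ and from $\mathds{A}^2$ to $\mathds{A}^3$, one forms a pivot {\gQVB} as a subset of the fibered product whose D-norm is the maximum of the two given D-norms together with the coincidence constraint at $\mathds{A}^2$; the modular Leibniz and inner Leibniz inequalities for the composed D-norm follow from those of the factors, and the extent is bounded above by the sum. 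For the coincidence axiom, the classical Arzel\`a--Ascoli argument (D-norm unit balls being compact in the module norm) extracts, from a sequence of metrical tunnels of extent going to $0$, a full isometric isomorphism in the limit.

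Second, given a Cauchy sequence $(\mathds{A}^n)_{n\in\N}$ in $\dmetpropinquity{}$, where
\[
\mathds{A}^n = (\module{M}_n,\CDN_n,\A_n,\Lip_n^\A,\B_n,\Lip_n^\B),
\]
the induced sequence of underlying modular {\gQVB s} $(\module{M}_n,\CDN_n,\B_n,\Lip_n^\B)$ is Cauchy for $\dmodpropinquity{}$, and the induced sequence of acting {\qcms s} $(\A_n,\Lip_n^\A)$ is Cauchy for $\dpropinquity{}$. Invoking the completeness theorems cited in the paper, we obtain a limit modular {\gQVB} $(\module{M}_\infty,\CDN_\infty,\B_\infty,\Lip_\infty^\B)$ and a limit {\qcms} $(\A_\infty,\Lip_\infty^\A)$. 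After passing to a subsequence so that consecutive metrical propinquity distances form a summable sequence, I would pick metrical tunnels of correspondingly small extent and use them, in the manner of \cite{Latremoliere16c}, to construct a left action of $\A_\infty$ on $\module{M}_\infty$: for $a \in \dom{\Lip_\infty^\A}$ and $\omega\in\dom{\CDN_\infty}$, lift to sequences $(a_n),(\omega_n)$ through the tunnels, form $a_n\omega_n$, and use the modular Leibniz inequality (Definition \ref{metrical-tunnel-def}(4)) to get a uniform $\CDN$-bound that provides convergent subsequences via compactness; a diagonal argument then produces a candidate $a\cdot\omega$. One then checks bilinearity, associativity, the modular Leibniz inequality at the limit, and that the constructed $\mathds{A}^\infty$ is actually the $\dmetpropinquity{}$-limit of the original Cauchy sequence by exhibiting explicit metrical tunnels of vanishing extent built from the chosen subsequential tunnels.

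The main obstacle is the third paragraph: reconciling the two independently constructed limits, the modular limit $(\module{M}_\infty,\CDN_\infty,\B_\infty,\Lip_\infty^\B)$ and the algebra limit $(\A_\infty,\Lip_\infty^\A)$, into a single metrical structure in a well-defined way. Both limits are canonical only up to full isometry, so one must make coherent simultaneous choices of representatives inside a \emph{single} sequence of pivot metrical tunnels rather than picking modular and base pivots independently. The modular Leibniz inequality (Definition \ref{metrical-tunnel-def}(4)), which couples $\CDN$ with $\Lip_\A$, is both the source of the rigidity needed to make the limiting action well-defined (its uniform-in-$n$ bound is what extracts the action from compactness) and what must be verified at the limit. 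Without this coupling the acting algebra could be rescaled independently of the module and the action would not pass continuously to the limit; with it, a careful tunnel composition yields both existence of the limit action and continuity of the construction for $\dmetpropinquity{}$.
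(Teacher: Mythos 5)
The paper does not prove this statement: it is quoted from \cite{Latremoliere18d}, so there is no in-paper argument to compare yours against. Judged on its own terms, your outline does follow the strategy of the cited reference --- exploit the fact that the extent of a metrical tunnel dominates the extents of both its modular component and its base tunnel, so that a Cauchy sequence for $\dmetpropinquity{}$ projects to Cauchy sequences for $\dmodpropinquity{}$ and $\dpropinquity{}$, and then recover the left action of the acting algebra on the limit module --- and you correctly isolate the genuine difficulty, namely that the two component limits are each defined only up to full isometry and must be produced coherently.

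That said, what you have written is a road map rather than a proof, and the gap sits exactly where you place it. Saying that one ``must make coherent simultaneous choices of representatives inside a single sequence of pivot metrical tunnels'' names the problem without solving it: the construction in \cite{Latremoliere18d} does not first form two abstract limits and then reconcile them, but builds the limit metrical quantum vector bundle directly from a fixed summable chain of metrical tunnels via the target-set machinery, so that the module, its D-norm, the base algebra, the acting algebra, and the action are all extracted from the same data and no a posteriori reconciliation is required. Your compactness-and-diagonal argument for defining $a\cdot\omega$ also needs more care than you give it: the lifts $a_n$ and $\omega_n$ through the tunnels are not canonical, so you must show that the limit is independent of the choice of lifts (this is where the vanishing extents must be used quantitatively), that the resulting action is additive, compatible with the $\B_\infty$-valued inner product, and satisfies the modular Leibniz inequality, and that the limit object verifies all four axioms of Definition \ref{qvb-def} --- in particular the compactness of the unit ball of the limit D-norm, which your sketch never addresses. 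None of these steps is expected to fail, but none of them is carried out, so the proposal should be read as a correct plan whose hardest portion remains to be executed.
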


We are now able to prove the following useful result.

\begin{theorem}\label{metrical-conv-thm}
  The following limit holds:
  \begin{multline*}
    \lim_{n\rightarrow\infty} \dmetpropinquity{}\big( \big(\Hilbert_n,\CDN_n,\C,0,C(\SG{n}), \Lip_n\big), \\ \big(\Hilbert_\infty,\CDN_\infty,\C,0,C(\SG{\infty}), \Lip_\infty\big) \big) = 0 \text{.}
  \end{multline*}
\end{theorem}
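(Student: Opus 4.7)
The plan is to combine the tunnel $\tau_{n, \alpha}$ of {\qcms s} from the alternative proof of Theorem \ref{prop-conv-thm} with the modular tunnel $\mu_n$ from the proof of Lemma \ref{modular-conv-lemma} into a single metrical tunnel. I take as ambient Hilbert module $\module{P}_n = \Hilbert_\infty \oplus \Hilbert_n$ with D-norm $\mathsf{T}_n$, as a right $\C\oplus\C$-module with L-seminorm $Q(z,w) = \varepsilon^{-1}|z-w|$; as ambient base {\qcms} I take $\A_n = C(\FC{\infty}) \oplus C(\FC{n})$ with L-seminorm $\mathsf{M}_{n,\alpha}$ from equation (\ref{M-norm-eq}); and the left action of $\A_n$ on $\module{P}_n$ is diagonal, $(f,g)\cdot(\xi,\eta) = (f\xi, g\eta)$. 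With this diagonal setup, the pairs $(\rho_\infty, \Pi_\infty)$ and $(\rho_n, \Pi_n)$ are automatically left-module morphisms, so conditions (1), (2), (3) and (5) of Definition \ref{metrical-tunnel-def} are immediate from Lemma \ref{modular-conv-lemma}, the alternative proof of Theorem \ref{prop-conv-thm}, and construction.

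The whole difficulty concentrates in condition (4), the modular Leibniz inequality
\begin{equation*}
\mathsf{T}_n\!\left( (f,g)\cdot(\xi,\eta) \right) \leq \bigl( \mathsf{M}_{n,\alpha}(f,g) + \|(f,g)\|_{\A_n} \bigr)\, \mathsf{T}_n(\xi,\eta).
\end{equation*}
For the first two entries defining $\mathsf{T}_n$, namely $\CDN_\infty(f\xi)$ and $\CDN_n(g\eta)$, the inequality follows from the standard modular Leibniz property for the metric spectral triples $(C(\FC{\infty}), \Hilbert_\infty, D_\infty)$ and $(C(\FC{n}), \Hilbert_n, D_n)$, which in turn follows from commutator expansion together with Theorem \ref{Lip-is-Lipschitz-thm}. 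The genuine work is to control the third entry, $\varepsilon^{-1}\|f\xi - g\eta\|_{\Hilbert_\infty}$. I would split
\begin{equation*}
\|f\xi - g\eta\|_{\Hilbert_\infty} \leq \|f\|_{C(\FC{\infty})}\|\xi - \eta\|_{\Hilbert_\infty} + \|(f|_{\FC{n}} - g)\eta\|_{\Hilbert_\infty},
\end{equation*}
where $\eta \in \Hilbert_n$ is viewed as an element of $\Hilbert_\infty$ via the isometric embedding. The first summand is dominated by $\|(f,g)\|_{\A_n} \mathsf{T}_n(\xi,\eta)$ at once.

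The main obstacle is the second summand, because the bridge term in $\mathsf{M}_{n,\alpha}$ only measures $f - g$ on the \emph{vertex set} $V_n$, whereas I need an estimate on all of $\FC{n}$. Here the approximation hypothesis enters decisively: for $x \in \FC{n}$ and any closest vertex $v \in V_n$, write $\varepsilon'_n := \Haus{d_n}(V_n, \FC{n})$ and use the three-term triangle inequality
\begin{equation*}
|f(x) - g(x)| \leq \Lip_\infty(f)\, d_\infty(x,v) + |f(v) - g(v)| + \Lip_n(g)\, d_n(x,v) \leq (2\varepsilon'_n + \alpha)\, \mathsf{M}_{n,\alpha}(f,g),
\end{equation*}
where I used $d_\infty \leq d_n$ on $\FC{n}$ together with the definition of $\mathsf{M}_{n,\alpha}$. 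This yields $\|f|_{\FC{n}} - g\|_{C(\FC{n})} \leq (2\varepsilon'_n + \alpha)\mathsf{M}_{n,\alpha}(f,g)$, and since $\|\eta\|_{\Hilbert_\infty} \leq \mathsf{T}_n(\xi,\eta)$, the second summand is bounded by $\varepsilon^{-1}(2\varepsilon'_n + \alpha)\mathsf{M}_{n,\alpha}(f,g)\mathsf{T}_n(\xi,\eta)$.

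Given a target $\varepsilon > 0$, I would then set $\alpha := \varepsilon/4$ and, by Definition \ref{approx-sequence-def}, choose $N$ so large that $\varepsilon'_n \leq \varepsilon/4$ for all $n \geq N$; this forces $2\varepsilon'_n + \alpha \leq \varepsilon$, delivering condition (4). The extent of the resulting metrical tunnel is the maximum of the extent of $\mu_n$ (controlled by $\varepsilon$, via Lemma \ref{modular-conv-lemma}) and the extent of $\tau_{n,\alpha}$ (controlled by $2\alpha + \varepsilon$, via the alternative proof of Theorem \ref{prop-conv-thm}), hence $O(\varepsilon)$. Letting $\varepsilon \to 0$ yields the claimed limit.
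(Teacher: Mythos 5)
Your proposal is correct and follows essentially the same route as the paper's proof: the same metrical tunnel assembled from $\tau_{n,\varepsilon/4}$ and $\mu_n$ with the diagonal action, the same splitting $\norm{f\xi-g\eta}{\Hilbert_\infty}\leq\norm{f}{C(\FC{\infty})}\norm{\xi-\eta}{\Hilbert_\infty}+\norm{(f|_{\FC{n}}-g)\eta}{\Hilbert_\infty}$, and the same nearest-vertex three-term estimate to upgrade the bridge bound from $V_n$ to all of $\FC{n}$. No gaps.
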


\begin{proof}
  We use the framework and notation of the proof of Theorem \ref{prop-conv-thm} and Lemma \ref{modular-conv-lemma} --- in particular, for all $n\in\N$, the D-norm $\mathsf{T}_n$ on $\module{M} = \Hilbert_\infty\oplus\Hilbert_n$ and the associated modular tunnel $\mu_n$. We also use the tunnels $\tau_{n,\alpha}$ from $(C(\FC{n}),\Lip_n)$ to $(C(\FC{\infty}),\Lip_\infty)$, built by using the L-seminorms $\mathsf{M}_{n,\alpha}$.
  
  Let $\varepsilon > 0$ be given, and let $N\in\N$ be such that for all $n\geq N$,
  \begin{itemize}
  \item $\tunnelextent{\mu_n} < \varepsilon$,
  \item $\Haus{d_n}(\FC{n},V_n) < \frac{\varepsilon}{4}$ and
  \item $\Haus{d_\infty}(\FC{\infty},V_n) < \frac{\varepsilon}{4}$.
  \end{itemize}
  
  Using the alternate proof of Theorem \ref{prop-conv-thm} given toward the end of Section 2, we see that the extent of the tunnel $\tau_{n,\frac{\varepsilon}{4}} = (\A_n,\mathsf{M}_{n,\frac{\varepsilon}{4}},\rho_n,\rho_\infty)$ is at most equal to $\frac{3 \varepsilon}{4}$. We let $\tau_n$ be the tunnel $\tau_{n,\frac{\varepsilon}{4}}$.

  \bigskip
  
  Let $f \in \sa{C(\FC{\infty})}$ and $g \in \sa{C(\FC{n})}$. Let $\xi \in \Hilbert_\infty$ and $\eta\in\Hilbert_n$. We compute successively:
\begin{align*}
  \mathsf{T}_n(f\xi, g\eta)
  &= \max\left\{ \CDN_\infty(f\xi), \CDN_n(g\eta), \frac{1}{\varepsilon}\norm{f \xi - g \eta}{\Hilbert_\infty} \right\} \\
  &\leq \max\left\{
    \begin{array}{l}
      (\norm{f}{C(\FC{\infty})} + \Lip_\infty(f)) \CDN_\infty(\xi) \text{,}\\
      (\norm{g}{C(\FC{n})} + \Lip_n(g)) \CDN_n(\eta) \text{,} \\
      \frac{1}{\varepsilon}\norm{f\xi - g\eta}{\Hilbert_\infty}
    \end{array} \right\}\\
  &\leq \max\left\{
    \begin{array}{l}
      (\norm{(f,g)}{\A_n} + \mathsf{M}_{n,\frac{\varepsilon}{4}} (f,g)) C(\xi,\eta) \text{,}\\
      \frac{1}{\varepsilon}\norm{f\xi - g\eta}{\Hilbert_\infty}
    \end{array} \right\}\text{.}
\end{align*}

Now, by construction, since $\eta \in \Hilbert_n$, then $f\eta$, meant to stand for $\pi_\infty(f)\eta$, is well defined (as $\Hilbert_n$ is a subspace of $\Hilbert_\infty$) and moreover, $\pi_\infty(f)\eta = \pi_n(f_n)\eta$ (written as $f\eta = f_n \eta$), where $f_n$ is the restriction of $f$ to $\FC{n}$, by construction of $\pi_n$ and $\pi_\infty$.

We thus compute successively:
\begin{align*}
  \frac{1}{\varepsilon}\norm{f\xi - g\eta}{\Hilbert_\infty}
  &\leq \frac{1}{\varepsilon}\left(\norm{f(\xi-\eta)}{\Hilbert_\infty} + \norm{f\eta-g\eta}{\Hilbert_\infty}\right)\\
  &\leq \frac{1}{\varepsilon}\left(\norm{f(\xi-\eta)}{\Hilbert_\infty} + \norm{f_n\eta-g\eta}{\Hilbert_n}\right)\\
  &\leq \frac{1}{\varepsilon}\left(\norm{f(\xi-\eta)}{\Hilbert_\infty} + \norm{(f_n-g)\eta}{\Hilbert_n}\right)\\
  &\leq \norm{f}{C(\FC{\infty})} \mathsf{T}_n(\xi,\eta) + \frac{1}{\varepsilon}\norm{f_n-g}{C(\FC{n})} \norm{\eta}{\Hilbert_n} \text{.}
\end{align*}

If $x\in \FC{n}$, then there exists $v \in V_n$ such that $d_\infty(x,v) < \frac{1}{4}\varepsilon$ and $d_n(x,v) < \frac{1}{4}\varepsilon$, and therefore, we obtain the following upper estimate:
\begin{align*}
  |f_n(x)-g(x)|
  &\leq |f_n(v)-g(v)| + |f_n(x)-f(v)| + |g(x) - g(v)| \\
  &= |f(v)-g(v)| + |f(x)-f(v)| + |g(x) - g(v)| \\
  &\leq \frac{\varepsilon}{4} \left(\mathsf{M}_{n,\frac{\varepsilon}{4}}(f,g) + \Lip_\infty(f) + \Lip_n(g)\right)\\
  &\leq \varepsilon \mathsf{M}_{n,\frac{\varepsilon}{4}}(f,g) \text{.}
\end{align*}

Thus,
\begin{equation*}
  \frac{1}{\varepsilon}\norm{f\xi-g\eta}{\Hilbert_\infty} \leq \left(\norm{f}{C(\FC{\infty})} + \mathsf{M}_{n,\frac{\varepsilon}{4}}(f,g)\right) \mathsf{T}_n(\xi,\eta)\text{.}
\end{equation*}

In conclusion, we have shown that
\begin{equation*}
  \mathsf{T}_n(f\xi,g\eta) \leq \left( \norm{(f,g)}{\A_n} + \mathsf{M}_{n,\frac{\varepsilon}{4}}(f,g) \right) \mathsf{T}_n(\xi,\eta) \text{,}
\end{equation*}
as desired.

Therefore, we have constructed a metrical tunnel whose extent $\chi$ satisfies the following estimate:
\begin{equation*}
  \xi = \max\{\tunnelextent{\mu_n}, \tunnelextent{\tau_{n,\frac{\varepsilon}{4}}}\} \leq \varepsilon\text{.}
\end{equation*}
This concludes the proof of the theorem.
\end{proof}

Let us review our situation. In the previous section, we easily observed that $(\FC{n},d_n)_{n\in\N}$ converges in the Gromov--Hausdorff distance to $(\FC{\infty},d_\infty)$. Moreover, we proved this convergence within the functional analytic framework of the propinquity, in preparation for the study of the convergence of spectral triples over these spaces.

Spectral triples give rise to certain kind of modules over {\qcms s}, called {\gQVB s} --- these modules are Hilbert spaces (i.e., Hilbert modules over $\C$) endowed with the graph norms of the Dirac operators. We proved in Lemma \ref{modular-conv-lemma} that, indeed, these modules converge in the sense of the modular propinquity, which is an extension of the construction of the propinquity to {\gQVB s}, available to us thanks to the functional analytic framework.

In fact, spectral triples also include the action of a {\qcms} on their associated {\gQVB s}. The resulting objects, dubbed metrical quantum vector bundles, also converge, per Theorem \ref{metrical-conv-thm}, for yet one more extension of the modular propinquity to metrical quantum vector bundles. Each of these extensions of the original propinquity consists in following the model laid out by the construction of the propinquity $\dpropinquity{}$, but also requires more complex diagrams in order to define tunnels. Interestingly, the only quantity we ever need to compute are distances between {\qcms s} --- the rigidity of the required diagrams in the definition of tunnels is sufficient to define well--behaved metrics.

Now, the following natural question arises. Given any two spectral triples, we could apply the definition of the metrical propinquity to their associated metrical vector bundles, as we did here. This defines a pseudo-metric on spectral triples, which we temporarily write as
\begin{equation*}
  \widetilde{\Lambda}((\A_1,\Hilbert_1,D_1),(\A_2,\Hilbert_2,D_2)) = \dmetpropinquity{}(\qvb{\A_1}{\Hilbert_1}{D_1},\qvb{\A_2}{\Hilbert_2}{D_2}) \text,
\end{equation*}
between any two metric spectral triples $(\A_1,\Hilbert_1,D_1)$ and $(\A_2,\Hilbert_2,D_2)$; see the comment after Definition \ref{propinquity-def} for the definition of a pseudo-metric. Is it a metric, up to a natural equivalence? In \cite{Latremoliere18g}, the third author proved that, indeed,
\begin{equation*}
  \widetilde{\Lambda}((\A_1,\Hilbert_1,D_1),(\A_2,\Hilbert_2,D_2)) = 0
\end{equation*}
if and only if there exists a unitary operator $U$ from $\Hilbert_1$ to $\Hilbert_2$ which conjugates the *-representations of $\A_1$ and $\A_2$ on, respectively, $\Hilbert_1$ and $\Hilbert_2$, and such that
\begin{equation}\label{u-d1-d2-eq}
  U^\ast |D_1| U = |D_2| \text{,}
\end{equation}
where, for all $j\in\{1,2\}$, the operator $|D_j|$ is the absolute value of the operator $D_j$, defined via the continuous functional calculus for (possibly unbounded) self-adjoint operators. However, we see that more information must be captured if we want to recover the spectral triples up to a unitary equivalence. We would like to replace Equation (\ref{u-d1-d2-eq}) with the stronger expression $U^\ast D_1 U = D_2$. This is the subject of the definition of the spectral propinquity, which builds itself on top of the metrical propinquity by including covariant quantities. This is the subject of the next subsection.

\subsection{Convergence of Spectral Triples}

If $(\A,\Hilbert,D)$ is a spectral triple, then $D$ induces a strongly continuous action of $\R$ on $\Hilbert$ by unitaries, defined by
\begin{equation*}
  \forall t \in \R, \quad U(t) = \exp(i t D)\text{,}
\end{equation*}
using the functional calculus applied to the self--adjoint operator $D$.

Given two spectral triples, we have discussed in the previous subsection how to make sense of their module structures being close; the last point to address is how to measure how close the actions of $\R$ they induce are.

The covariant version of the propinquity was introduced in \cite{Latremoliere18b}. The covariant modular propinquity was then introduced in \cite{Latremoliere18g}, and it is the tool needed to define the spectral propinquity. A full description of the covariant modular propinquity is not necessary for our purposes; instead, we restrict our attention to the sort of metrical bundles obtained from spectral triples and the actions of $\R$ described above in Equation (\ref{qvb-def-eq}).

Let us assume that we are given two metric spectral triples $(\A_1,\Hilbert_1,D_1)$ and $(\A_2,\Hilbert_2,D_2)$. In Equation (\ref{qvb-def}), we have defined the metrical quantum vector bundles $\qvb{\A_1}{\Hilbert_1}{D_1}$ and $\qvb{\A_2}{\Hilbert_2}{D_2}$, associated respectively with the metric spectral triples $(\A_1,\Hilbert_1,D_1)$ and $(\A_2,\Hilbert_2,D_2)$. Let $\mathds{P} = (\tau,\tau')$ be a metrical tunnel between $\qvb{\A_1}{\Hilbert_1}{D_1}$ and $\qvb{\A_2}{\Hilbert_2}{D_2}$, such as the ones built in Theorem \ref{metrical-conv-thm}.

    In particular, let us write $\tau = (\mathds{P},(\Phi_1,\phi_1),(\Phi_2,\phi_2))$ and note that by Definition \ref{metrical-tunnel-def} for metrical tunnels, $\tau$ is a modular tunnel  between $(\Hilbert_1,\CDN_1,\C,0)$ and $(\Hilbert_2,\CDN_2,\C,0)$, where $\CDN_1$ and $\CDN_2$ are, respectively, the graph norms of $D_1$ and $D_2$. Furthermore, let us write $\mathds{P} = (\mathscr{P},\CDN_\D,\D,\Lip_\D)$.

    Since $D_1$ and $D_2$ are self-adjoint, we can define two strongly continuous actions of $\R$ by unitaries on $\Hilbert_1$ and $\Hilbert_2$ by letting
    \begin{equation*}
      \forall j \in \{1,2\}, \quad \forall t \in \R, \quad U_j^t = \exp(i t D_j) \text{.}
    \end{equation*}

    The \emph{spectral propinquity} is defined by extending the metrical propinquity of Definition \ref{metrical-prop-def} to include the actions $U_1$ and $U_2$.
    
    With this in mind, let $\varepsilon > 0$ and assume that $\tunnelextent{\tau} \leq \varepsilon$. Let us call a pair $(\varsigma_1,\varsigma_2)$ of maps from $\R$ to $\R$ an $\varepsilon$-\emph{iso-iso}, for some $\varepsilon > 0$, whenever
    \begin{equation*}
      \forall j \in \{1,2\}, \quad \forall x,y,z \in \left[-\frac{1}{\varepsilon},\frac{1}{\varepsilon}\right], \quad \left| |\varsigma_j(x) + \varsigma_j(y)-z| - |(x + y) - \varsigma_k(z)| \right| \leq \varepsilon \text{,}
    \end{equation*}
    and $\varsigma_1(0) = \varsigma_2(0) = 0$.
    
    As discussed in \cite{Latremoliere18b}, such maps can be used to define a distance on the class of proper monoids, but for our purpose, as we only work with the proper group $\R$, the definition simplifies somewhat. In fact, we only recall the definition so that we may properly define the spectral propinquity below: for our purposes, the only iso-iso map we will work with is simply the identity of $\R$.

    Thus, suppose that we are given an $\varepsilon$-iso-iso $(\varsigma_1,\varsigma_2)$ from $\R$ to $\R$, as above. We call $(\tau,\tau',\varsigma_1,\varsigma_2)$ an $\varepsilon$-covariant metrical tunnel.

The \emph{$\varepsilon$-covariant reach} of $(\tau,\varsigma_1,\varsigma_2)$ is then defined as follows:
\begin{multline*}
  \max_{j \in \{1,2\}} \sup_{\substack{\xi \in \Hilbert^j\\ \CDN^j(\xi)\leq 1}}\inf_{\substack{\xi'\in\Hilbert^k\\ \CDN^k(\xi')\leq 1}}\sup_{|t| \leq \frac{1}{\varepsilon}} \\
  \sup_{\substack{\omega\in\module{P}\\\CDN_\D(\omega)\leq 1}} \left|\inner{U^j(t)\xi}{\Pi_j(\omega)}{\Hilbert^j} - \inner{U^k(\varsigma^k(t))\xi'}{\Pi_k(\omega)}{\Hilbert^k}\right| \text{.}
\end{multline*}

We define the \emph{$\varepsilon$-magnitude} $\tunnelmagnitude{\tau,\tau',\varsigma_1,\varsigma_2}{\varepsilon}$ of an $\varepsilon$-covariant tunnel $(\tau,\varsigma_1,\varsigma_2)$ to be the maximum of the extent of $\tau$, the extent of $\tau'$, and the $\varepsilon$-covariant reach of $(\tau,\varsigma_1,\varsigma_2)$. 

The \emph{spectral propinquity}
\begin{equation*}
  \spectralpropinquity{}((\A_1,\Hilbert^1,D^1),(\A_2,\Hilbert^2,D^2))
\end{equation*}
is the nonnegative number
\begin{equation*}
  \min\left\{\frac{\sqrt{2}}{2},\inf\{\varepsilon>0 : \exists \text{ $\varepsilon$-covariant metrical tunnel $\tau$, } \quad \tunnelmagnitude{\tau}{\varepsilon} \leq \varepsilon \right\} \text{.}
\end{equation*}

We refer to \cite{Latremoliere18b,Latremoliere18d} for a discussion of the fundamental properties of this metric, as a special case of the covariant metrical propinquity, including a discussion of sufficient conditions for completeness (on certain classes).

\bigskip

We record the following property of this metric.
\begin{theorem}[{\cite{Latremoliere18g}}]
  The spectral propinquity $\spectralpropinquity{}$ is a metric on the class of metric spectral triples, up to the following coincidence property: for any metric spectral triples $(\A_1,\Hilbert^1,D^1)$ and $(\A^2,\Hilbert^2,D^2)$,
  \begin{equation*}
    \spectralpropinquity{}((\A_1,\Hilbert^1,D^1),(\A_2,\Hilbert^2,D^2)) = 0
  \end{equation*}
  if and only if there exists a unitary map $V : \Hilbert^1\rightarrow \Hilbert^2$ such that
  \begin{equation*}
    V D_1 V^\ast = D_2
  \end{equation*}
  and
  \begin{equation*}
    \mathrm{Ad}_V = V (\cdot) V^\ast \text{ is a *-isomorphism from $\A_1$ onto $\A_2$},
  \end{equation*}
  where (as is customary) we identify $\A_1$ and $\A_2$ with their images by their representations on $\Hilbert^1$ and $\Hilbert^2$, respectively. In particular, the *-isomorphism from $\A^1$ onto $\A^2$ implemented by the adjoint action of $V$ is a full quantum isometry from $(\A^1,\opnorm{[D^1,\cdot])}{}{\Hilbert^1}$ onto $(\A^2,\opnorm{[D^2,\cdot]}{}{\Hilbert^2})$.
\end{theorem}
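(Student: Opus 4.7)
The strategy is to treat the two implications separately, with essentially all of the content residing in the backward direction.

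For the forward (``if'') direction, given a unitary $V : \Hilbert^1 \to \Hilbert^2$ with $V D_1 V^\ast = D_2$ and $\mathrm{Ad}_V$ a *-isomorphism from $\A_1$ onto $\A_2$, I would build, for every $\varepsilon > 0$, a trivial $\varepsilon$-covariant metrical tunnel of magnitude $0$. Identify $\A_2$ with $\A_1$ via $\mathrm{Ad}_V$ to produce a tunnel of extent $0$ between the underlying {\qcms s}, and identify $\Hilbert^2$ with $\Hilbert^1$ via $V^\ast$ to obtain a modular tunnel of extent $0$ between $\qvb{\A_1}{\Hilbert^1}{D_1}$ and $\qvb{\A_2}{\Hilbert^2}{D_2}$; since $V D_1 V^\ast = D_2$ implies $V U^1(t) V^\ast = U^2(t)$ for every $t \in \R$ by the Borel functional calculus, taking $\varsigma_1 = \varsigma_2 = \mathrm{id}_\R$ makes the $\varepsilon$-covariant reach vanish. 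Hence $\spectralpropinquity{}((\A_1,\Hilbert^1,D_1),(\A_2,\Hilbert^2,D_2)) = 0$.

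For the backward (``only if'') direction, assume $\spectralpropinquity{}((\A_1,\Hilbert^1,D_1),(\A_2,\Hilbert^2,D_2)) = 0$. Then for each integer $n \geq 1$ there exists a $\frac{1}{n}$-covariant metrical tunnel $(\tau_n,\tau_n',\varsigma_1^n,\varsigma_2^n)$ of magnitude at most $\frac{1}{n}$. The first step is to observe that $\dmetpropinquity{}(\qvb{\A_1}{\Hilbert^1}{D_1},\qvb{\A_2}{\Hilbert^2}{D_2}) = 0$, so by the coincidence property of the metrical propinquity there exists a *-isomorphism $\pi : \A_1 \to \A_2$ with $\Lip^2 \circ \pi = \Lip^1$ and a Hilbert-space unitary $V : \Hilbert^1 \to \Hilbert^2$ such that $V(a\xi) = \pi(a) V(\xi)$ for every $a \in \A_1$ and $\xi \in \Hilbert^1$, and $V$ restricts to a bijection $\dom{D_1} \to \dom{D_2}$ with $\CDN^2 \circ V = \CDN^1$. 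The extraction of $V$ itself is not routine: it relies on compactness of the unit balls of the $D$-norms (and of the Lipschitz seminorms on states) in order to run a diagonal argument along the sequence $(\tau_n,\tau_n')_{n\in\N}$ and select a limiting surjective isometry. At this stage one obtains $\norm{D_2 V \xi}{\Hilbert^2} = \norm{D_1 \xi}{\Hilbert^1}$ for every $\xi \in \dom{D_1}$, i.e., $V$ conjugates $|D_1|$ to $|D_2|$, but not yet the operators themselves.

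The main obstacle — and the reason the covariant reach is built into the spectral propinquity in the first place — is to upgrade this to the full conjugacy $V D_1 V^\ast = D_2$. Since the iso-iso on $\R$ carries no extra freedom here, one may, after passing to a subsequence, take $\varsigma_1^n = \varsigma_2^n = \mathrm{id}_\R$, so that the vanishing of the covariant reach forces, for each fixed $t \in \R$ and each $\xi,\eta \in \Hilbert^1$ with $\CDN^1(\xi), \CDN^1(\eta) \leq 1$, the matrix-coefficient identity
\begin{equation*}
 \inner{U^1(t)\xi}{\eta}{\Hilbert^1} = \inner{U^2(t) V \xi}{V \eta}{\Hilbert^2}
\end{equation*}
in the limit; this is obtained by testing the lifts provided by the tunnels $\tau_n$ against $\xi$ and $\eta$ and using that $\Pi_j$ preserves $D$-norms on sections of $\module{P}$. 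By density of the $\CDN^1$-unit ball in $\Hilbert^1$ and polarization, this gives $V U^1(t) V^\ast = U^2(t)$ for every $t \in \R$, and Stone's theorem then delivers $V D_1 V^\ast = D_2$. Combined with the already established fact that $\mathrm{Ad}_V$ is a *-isomorphism from $\A_1$ onto $\A_2$, this proves the coincidence property, and establishes that $\spectralpropinquity{}$ is a metric up to unitary equivalence of metric spectral triples.
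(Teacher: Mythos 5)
The paper does not prove this theorem: it is recorded as a quoted result from \cite{Latremoliere18g}, so there is no in-paper argument to compare yours against line by line. Judged on its own terms, your forward direction (building a trivial covariant metrical tunnel of magnitude $0$ from $V$) is correct and routine.

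The backward direction, which you correctly identify as carrying all the content, has two concrete gaps. First, you assert that ``one may, after passing to a subsequence, take $\varsigma_1^n=\varsigma_2^n=\mathrm{id}_\R$.'' Nothing in the definition forces the witnessing iso-isos to be the identity; at best a diagonal extraction yields pointwise convergence to a limit pair, which one must then identify as an isometry of $\R$ fixing $0$ --- and this leaves open the possibility $\varsigma=-\mathrm{id}_\R$ (which is a $0$-iso-iso for the stated condition), giving $VD_1V^\ast=-D_2$ rather than $VD_1V^\ast=D_2$ unless ruled out by a separate argument. Second, you extract $V$ from the coincidence property of $\dmetpropinquity{}$ applied to the sequence of metrical tunnels, and then separately claim the limiting matrix-coefficient identity for that same $V$. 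These two limiting procedures must be run simultaneously along a single subsequence: the unitary produced by the metrical coincidence theorem is only determined by the choices made inside its own compactness argument, and there is no a priori reason it is the one that the covariant-reach estimates see. A smaller slip: the unit ball of $\CDN^1$ is compact in $\Hilbert^1$, hence not dense; what you need is that $\dom{D_1}=\bigcup_{r>0}r\cdot\{\xi:\CDN^1(\xi)\leq 1\}$ is dense, after which linearity and Stone's theorem finish as you say. The overall architecture --- reduce to the metrical coincidence to get $V$ with $V|D_1|V^\ast=|D_2|$, then use the covariant reach to upgrade to $VD_1V^\ast=D_2$ --- is the right one, but the hard analytic content lies precisely in the steps you elide.
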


We are now able to conclude this section with the formal statement, and the proof, of our main result in this paper.

\begin{theorem}
  If Hypothesis \ref{main-hyp} holds, then
  \begin{equation*}
    \lim_{n\rightarrow\infty} \spectralpropinquity{}\left(\left(C(\FC{n}),\Hilbert_n,D_n\right),\left(C(\FC{\infty}),\Hilbert_\infty,D_\infty\right)\right) = 0 \text{.}
  \end{equation*}
\end{theorem}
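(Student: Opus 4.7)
The plan is to enhance the metrical tunnels of Theorem \ref{metrical-conv-thm} with the identity iso-iso $(\mathrm{id}_\R,\mathrm{id}_\R)$ and then to bound the resulting covariant reach uniformly in $t$, by exploiting the block-diagonal compatibility of the Dirac operators under the canonical embeddings $\Hilbert_n \hookrightarrow \Hilbert_\infty$. Once the covariant reach is controlled, the conclusion follows immediately from the definition of the spectral propinquity and from the extent bounds already established.

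The structural key point is that, since $\Hilbert_n = \bigoplus_{j\leq B_n}\mathscr{J}$ embeds into $\Hilbert_\infty$ as the closed subspace of sequences supported on $\{j: j\leq B_n\}$, and since both $D_n$ and $D_\infty$ act componentwise as $\lambda_j^{-1}\Dirac$ on these coordinates, the orthogonal projection $P_n$ onto $\Hilbert_n$ commutes with $D_\infty$, and $D_\infty|_{\Hilbert_n}=D_n$. Consequently, with $U_\infty(t)=\exp(itD_\infty)$ and $U_n(t)=\exp(itD_n)$,
\begin{equation*}
  U_\infty(t)P_n \;=\; P_n U_\infty(t) \;=\; U_n(t)P_n \qquad \text{for every } t\in\R.
\end{equation*}
Moreover, $P_n$ is contractive for $\CDN_\infty$, so $\CDN_n(P_n\xi)\leq \CDN_\infty(\xi)$.

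Fix $\varepsilon>0$ and, for $n$ large enough, consider the metrical tunnel $(\mu_n,\tau_{n,\varepsilon/4})$ from the proof of Theorem \ref{metrical-conv-thm}, whose extent is at most $\varepsilon$; recall its underlying module is $\module{P} = \Hilbert_\infty \oplus \Hilbert_n$ with D-norm $\mathsf{T}_n$, and that $\Pi_\infty,\Pi_n$ are the coordinate projections. Given $\xi \in \Hilbert_\infty$ with $\CDN_\infty(\xi)\leq 1$, take $\xi' = P_n\xi$; then $\CDN_n(\xi')\leq 1$, and the computation in the proof of Lemma \ref{modular-conv-lemma} yields $\|\xi-\xi'\|_{\Hilbert_\infty}\leq\varepsilon$. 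For any $\omega=(a,b)\in\module{P}$ with $\mathsf{T}_n(\omega)\leq 1$, writing $U_\infty(t)\xi = U_n(t)\xi' + U_\infty(t)(\xi-\xi')$ and using $\|a-b\|_{\Hilbert_\infty}\leq\varepsilon\,\mathsf{T}_n(\omega)\leq\varepsilon$, unitarity of $U_\infty(t)$, and $\|a\|,\|\xi'\|\leq 1$, we obtain
\begin{equation*}
  \left|\inner{U_\infty(t)\xi}{a}{\Hilbert_\infty} - \inner{U_n(t)\xi'}{b}{\Hilbert_n}\right| \;\leq\; \|\xi'\|\,\|a-b\| + \|\xi-\xi'\|\,\|a\| \;\leq\; 2\varepsilon,
\end{equation*}
uniformly in $t\in\R$. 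The symmetric case $\xi\in\Hilbert_n$ is handled by setting $\xi'=\xi$ viewed in $\Hilbert_\infty$, for which $U_\infty(t)\xi=U_n(t)\xi$ by block diagonality, giving a bound $\leq\varepsilon$ by the same argument.

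Therefore the $\varepsilon$-magnitude of the $\varepsilon$-covariant metrical tunnel $(\mu_n,\tau_{n,\varepsilon/4},\mathrm{id}_\R,\mathrm{id}_\R)$ is at most $2\varepsilon$ for all sufficiently large $n$; since $\varepsilon>0$ is arbitrary, the spectral propinquity tends to zero. The main obstacle one must confront is precisely the block-diagonality of $D_\infty$ relative to $\Hilbert_n$: it is what allows the identity iso-iso to work and makes the covariant reach estimate reduce to the same kind of $\|\xi-P_n\xi\|$ and $\|a-b\|$ bounds already controlled in Lemma \ref{modular-conv-lemma}. Without such compatibility between the approximating and limiting Dirac operators, a much more delicate argument relating $U_n(t)$ and $U_\infty(t)$ would be required, for instance via Trotter-type or resolvent convergence estimates.
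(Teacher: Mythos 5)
Your proposal is correct and follows essentially the same route as the paper: the same metrical tunnel $(\mu_n,\tau_{n,\varepsilon/4})$ promoted to a covariant tunnel via the identity iso-iso, with the covariant reach controlled by the observation that $D_\infty$ restricts to $D_n$ on the block $\Hilbert_n\subseteq\Hilbert_\infty$ (so $U_\infty(t)$ and $U_n(t)$ agree there), combined with the $\norm{\xi-P_n\xi}{\Hilbert_\infty}\leq\varepsilon$ and $\norm{a-b}{\Hilbert_\infty}\leq\varepsilon$ bounds already in hand and Cauchy--Schwarz. Your write-up is in fact somewhat more explicit than the paper's about the choice of $\xi'=P_n\xi$ and the resulting $2\varepsilon$ bound, which is only a harmless rescaling of the paper's $\varepsilon$.
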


\begin{proof}
We use in this proof the notation of Theorem \ref{prop-conv-thm}, Lemma \ref{modular-conv-lemma} and Theorem \ref{metrical-conv-thm}.

For each $t \in \R$ and $n\in\Nbar$, we set
\begin{equation*}
  U_n(t) = \exp( i D_n ),
\end{equation*}
viewed as an operator on $\Hilbert_n$, where $\Hilbert_n$ is seen as a subspace of $\Hilbert_\infty$.

Let $\xi \in \Hilbert_n$. we note that $D_n\xi = D_\infty\xi$, so that $U_n(t)\xi = U_\infty(t)\xi$, for all $t\in \R$. % Observe that with our natural identification of $\Hilbert_n$ as a subspace of $\Hilbert_\infty$, the operator $U_n(t)$ naturally acts on $\Hilbert_\infty$, albeit not as an isometry.

With this in mind, we see that if $(\xi,\eta)\in\Hilbert_n\oplus\Hilbert_\infty$ with $C_n(\xi,\eta)\leq 1$, then, in particular, $\norm{\xi-\eta}{\Hilbert_\infty} \leq \varepsilon$. Therefore, we obtain the following estimate, valid for all $t\in \R$:
\begin{align*}
  \norm{U_n(t) \xi - U_\infty(t)\eta}{\Hilbert_\infty}
  &\leq \norm{U_n(t)\xi-U_\infty(t)\xi}{\Hilbert_\infty} + \norm{U_\infty(t)\xi-U_\infty(t)\eta}{\Hilbert_\infty} \\
  &\leq \norm{U_n(t)\xi-U_\infty(t)\xi}{\Hilbert_\infty}  + \norm{\xi-\eta}{\Hilbert_\infty} \\
  &\leq 0 + \varepsilon = \varepsilon \text{.}
\end{align*}

Since, obviously, $(\mathrm{id}_\R,\mathrm{id}_\R)$ is a $0$-iso-iso, the above computation shows, by the Cauchy--Schwarz inequality, that the metrical tunnel $(\mu_n,\tau_n)$ has covariant reach at most $\varepsilon$.

Since the extent of $(\mu_n,\tau_n)$ is also at most $\varepsilon$, we conclude, as desired, that
\begin{equation*}
  \spectralpropinquity((C(\FC{n}),\Hilbert_n,D_n),(C(\FC{\infty}),\Hilbert_\infty,D_\infty)) \leq \varepsilon \text{.}
\end{equation*}
This concludes the proof of our main theorem.
\end{proof}

\section{Future Directions}

We conclude this paper with several suggestions for future directions of research, based upon the results and methods presented here:
\begin{itemize}\setlength{\itemsep}{5pt}
\item The metrical propinquity is known to be complete, and so the conditions for completeness of the covariant propinquity are discussed in \cite{Latremoliere18c}. It is therefore natural to construct spectral triples on fractals sets described as limits of simpler sets by using the completeness of the propinquity. Thus, the methods described here provide a test case for this broader goal.

\item The propinquity (in its different versions) is a tool from noncommutative geometry and is defined on classes of noncommutative C*-algebras and their modules. It therefore opens up the possibility of discussing the notion of a \emph{noncommutative fractal}. To this end, a closed class of {\qcms s}, or even a complete class of metric spectral triples, could be endowed with a finite collection of maps which act as contractions for the corresponding propinquity. We would then  typically use the completeness once more in order to obtain a limit, by the Picard fixed point theorem, which will be noncommutative if all the elements in our class are. A very simple such example can be obtained by working with matrix-valued functions over compact subsets of the plane --- it is easy to define L-seminorms for these spaces so that, for any fixed $d\in\N\setminus\{0\}$, the noncommutative C*-algebra of $d\times d$ matrix-valued continuous functions on the {\SiepG} is indeed the limit, for the propinquity, of the sequence of C*-algebras of $d\times d$ matrix-valued continuous functions over the approximating sets $\SG{n}$. It would be very interesting to find and investigate richer examples, including noncommutative fractals constructed on a simple C*-algebra.

\item Future progress on the properties of the spectral propinquity can be applied to the current work in order to devise new methods to compute certain quantities attached to fractals and related to spectral triples, such as gleaning new information about the associated spectral zeta functions, and connections between convergence for the propinquity and spectral complex dimensions (see Remark \ref{m-rmk} above for references on complex dimensions, especially \cite{seventeenhalf, eighteenhalf}). Moreover, we will discuss in a future work how the present framework can be used to discuss the convergence of the energy forms on the pre--fractal approximations to the {\SiepG} and to the harmonic gasket.

\item The primary purpose of the third author in introducing the propinquity and its variants was to construct quantum field theories by constructing algebras of observables by approximations, using the completeness of the propinquity. A fascinating possibility to be explored is the construction of a mathematically rigorous quantum field theory over the {\SiepG} and other piecewise $C^1$-fractal curves.

\item One of the second author's key motivations in \cite{thirtyfourhalf,Lapidus94,Lapidus97} was to eventually be able to define and study a suitable notion of a ``fractal manifold'' and of a ``fractal (as well as noncommutative) space-time'', along with the associated moduli spaces, by analogy with general relativity and the approach to quantum physics via Feynman path integrals \cite{twentyonehalf}. (See also \cite{sixtytwohalf}.) Examples of such fractal manifolds should include piecewise $C^1$-fractal curves and, in particular, the {\SiepG} and the harmonic gasket. The techniques and results developed and obtained in this work, together, in particular, with the work in \cite{Lapidus94,Lapidus97,Lapidus08,Lapidus14} and \cite{Kigami08,ninehalf}, should help realize this long-term goal.
  
\end{itemize}

\providecommand{\bysame}{\leavevmode\hbox to3em{\hrulefill}\thinspace}
\providecommand{\MR}{\relax\ifhmode\unskip\space\fi MR }
% \MRhref is called by the amsart/book/proc definition of \MR.
\providecommand{\MRhref}[2]{%
  \href{http://www.ams.org/mathscinet-getitem?mr=#1}{#2}
}
\providecommand{\href}[2]{#2}

\vfill

\end{document}